\newtheorem{theorem}{Theorem}
\newtheorem{lemma}{Lemma}
\newtheorem{proposition}{Proposition}
\theoremstyle{definition}
\newtheorem{definition}{Definition}
\theoremstyle{remark}
\newtheorem{remark}{Remark}
\DeclareMathOperator{\Id}{Id}
\DeclareMathOperator{\Span}{Span}
\title[Identities of an algebra of upper triangular matrices]{Gradings, graded  identities, $*$-identities   and graded $*$-identities of an algebra of upper triangular matrices}
\author[J. A. Gomez Parada]{Jonatan Andres Gomez Parada}
\address{Department of Mathematics \\ IMECC, UNICAMP \\ 
Sérgio Buarque de Holanda, 651, 13083-859 Campinas, SP, Brazil}
\email{j211980@dac.unicamp.br}
\thanks{This study was financed in part by the Coordena\c c\~ao de Aperfei\c coamento de Pessoal de N\'{\i}vel Superior - Brasil (CAPES) -
Finance Code 001.}
\author[P. Koshlukov]{Plamen Koshlukov}
\address{Department of Mathematics \\ IMECC, UNICAMP \\ 
Sérgio Buarque de Holanda, 651, 13083-859 Campinas, SP, Brazil}
\email{plamen@unicamp.br}
\thanks{P. Koshlukov was partially supported by FAPESP Grant 2018/23690-6 and by CNPq Grant 307184/2023-4}
\date{}
\begin{document}

\begin{abstract}
Let $K \langle X\rangle$  be the free associative algebra freely generated over the field $K$ by the countable set $X = \{x_1, x_2, \ldots\}$. If $A$ is an associative $K$-algebra, we say that a polynomial $f(x_1,\ldots, x_n) \in  K \langle X\rangle$ is a polynomial identity, or simply an identity in $A$ if $f(a_1,\ldots, a_n) = 0$ for every $a_1$, \dots, $a_n \in  A$.

Consider $\mathcal{A}$  the subalgebra of $UT_3(K)$ given by:
\[ \mathcal{A} =   K(e_{1,1} + e_{3,3}) \oplus Ke_{2,2} \oplus Ke_{2,3} \oplus Ke_{3,2} \oplus Ke_{1,3} , \]
where $e_{i,j}$ denote the matrix units. We investigate the gradings on the algebra $\mathcal{A}$, determined by an abelian group, and prove that these gradings are elementary. Furthermore, we compute a basis for the $\mathbb{Z}_2$-graded identities of $\mathcal{A}$, and also for the $\mathbb{Z}_2$-graded identities with graded involution. Moreover, we describe the cocharacters of this algebra.
\end{abstract}

\keywords{Polynomial identities; graded identities; identities with involution; upper triangular matrices}

\subjclass{16R10, 16W10, 16R50, 16W50}

\maketitle

\section*{Introduction}

Let  $K$ be a field and let $K\langle X \rangle$ be the free associative algebra freely generated by the countable set of indeterminates $X$. One can view $K\langle X\rangle$ as the set of all polynomials in the non-commuting variables from the set $X$. Given an associative algebra  $A$, a polynomial $f$ in $K\langle X\rangle$ is called a {\sl polynomial identity} of $A$ if $f$ evaluates to zero when its variables are substituted with arbitrary elements of $A$. An algebra satisfying a non-zero polynomial identity is called a {\sl PI-algebra}. The set of all identities for $A$ is denoted by $T(A)$. Clearly $T(A)$ is an ideal in $K\langle X\rangle$. Moreover it is closed under endomorphisms of the free algebra $K\langle X\rangle$. It can be seen that every such ideal coincides with $T(A)$ for some $A$.
Among the algebras that satisfy non-zero polynomial identities, those that have been of greatest interest in the development of the theory of polynomial identities and that will play a significant role in this paper include the Grassmann algebra, the full matrix algebras $M_n(K)$, and $UT_n(K)$, the algebra of the upper triangular matrices of order $n$. 

One initial problem to be considered in the theory of algebras with polynomial identities, which we will address here, is determining the set of all identities satisfied by a particular algebra, as well as a generating set for them. In the case of a PI-algebra $A$ over a field of characteristic zero, it is well known that the polynomial identities of $A$ follow from the multilinear ones. Therefore, we can restrict our study to multilinear polynomials.

In the particular case of the algebra $UT_n(K)$, its polynomial identities are well known. If $K$ is any infinite field, then a basis for the polynomials identities of $UT_n(K)$ is given by the polynomial $[x_1, x_2] \cdots [x_{2k-1}, x_{2k}]$. Considering the graded case, A. Valenti in \cite{valenti2002graded} studied the graded identities and the graded cocharacters of $UT_2(K)$. In \cite{di2004} the authors classified the elementary gradings on the upper triangular matrix algebras, and described their graded identities. In \cite{valenti2007group}  Valenti and  Zaicev proved that every group grading on the algebra $UT_n(K)$  is isomorphic to an elementary grading. Later on Cirrito  considered in \cite{cirrito2013gradings} the graded identities of $UT_3(K)$.

Involutions on the algebra $UT_n(K)$ have also been extensively studied. An {\sl involution} on an algebra $A$ is an antiautomorphism of order two, that is, a linear map $* \colon A \to  A$ satisfying $(ab)^{*} = b^{*}a^{*}$ and $(a^{*})^{*} = a$, for every $a$, $b \in  A$. In the classification of involutions, such ones are called involutions of the first kind. The involutions on $UT_n(K)$ were described in \cite{di2006involutions}. In that paper, the authors also described the identities with involution for the algebras $UT_2(K)$ and $UT_3(K)$. Graded involutions on $UT_n(K)$ were described by Valenti and Zaicev in \cite{valenti2009graded}.

We are interested in a particular subalgebra of $UT_3(K)$. Consider $\mathcal{A}$ the subalgebra of $UT_3(K)$ given by 
\[ \mathcal{A} =   K(e_{1,1} + e_{3,3}) \oplus Ke_{2,2} \oplus Ke_{1,2} \oplus Ke_{2,3} \oplus Ke_{1,3} , \]
where $e_{i,j}$ denote the matrix units, that is, $A$ consists of the matrices $\begin{pmatrix} d & a & c \\ 0 & g & b \\ 0 & 0 & d  \end{pmatrix}$. 

The polynomial identities of $\mathcal{A}$ were described by Gordienko in \cite{gordienko2009regev}. Furthermore, in \cite{pktcm},  \cite{koshlukov2013polynomial} the authors considered the generic algebra of $M_{1,1}(E)$ in two generators. It was shown in that paper that its polynomial identities are the same as the ones of $\mathcal{A}$. We are interested in the identities of the algebra $\mathcal{A}$ when considering additional structures (grading, involution, etc.), with the aim of determining a new PI-equivalence like in the case of traditional polynomial identities. Here we point out that the research in \cite{koshlukov2013polynomial} was motivated by a question posed by A. Berele, about the centre of the generic algebra of $M_{1,1}(E)$ in $d\ge 2$ generators. It is well known that the algebra generated by $d$ generic matrices for $M_n(K)$ is a (noncommutative) domain. Hence its centre is a commutative domain, and it can me embedded into its field of fractions. Several important questions in PI theory could be solved by using this simple trick.
The algebra $\mathcal{A}$, with involution, was also considered in \cite{MISHCHENKO200066}. 

We draw the readers' attention that in characteristic 0, the polynomial identities of $M_{1,1}(E)$ were determined by Popov \cite{popov}, and they follow from the identities $[[x_1,x_2]^2, x_1]$ and $[[x_1,x_2],[x_3,x_4], x_5]$. If one considers the generic algebra in two generators for $M_{1,1}(E)$ (or the algebra $\mathcal{A}$), the identities follow from the following three: $[[x_1,x_2][x_3,x_4],x_5]$, $[x_1,x_2][x_3,x_4][x_5,x_6]$, and from the standard identity $s_4$ of degree 4. Neither of these is an identity for $M_{1,1}(E)$, though. 

In this paper, we first study the gradings on the algebra $\mathcal{A}$, when the grading group is abelian. Additionally, we determine a basis for the $\mathbb{Z}_2$-graded identities of $\mathcal{A}$, and also for the identities with involution, and for the  $\mathbb{Z}_2$-graded identities with graded involution. We also determine the corresponding cocharacter sequence.

\section{Preliminaries}

\subsection{Graded identities}

Let $K$ be a fixed field of characteristic zero and $G$ a group. All algebras we consider will be associative and over $K$. 
The algebra $A$ is a $G$-graded algebra if $ A = \oplus_{g\in G} A_g $ is    a direct sum of the vector subspaces  $A_g$, satisfying  the relations
 $A_gA_h \subseteq A_{gh}$   for every $ g$, $h\in G$.  The subspaces $A_g$ are called the {\sl homogeneous components of $A$}, and the elements of each $A_g$ are called the {\sl homogeneous elements of $A$} of homogeneous degree $g$. We say that an associative algebra is a {\sl superalgebra} if $A$ is a $\mathbb{Z}_2$-graded algebra.

Let $G$ be a group, and consider the set of indeterminates  $X = \cup_{g\in G} X_g$, where $X_g = \{x_1^{(g)}, x_2^{(g)}, \dots \}$ is a countable infinite set for each $g$. We say that the indeterminates in $X_g$ have a homogeneous degree $g$, and the homogeneous degree of a monomial $x_{i_1}^{(g_{j_1})} \cdots x_{i_m}^{(g_{j_m})}  \in K\langle X \rangle$ is given by $g_{j_1} \cdots g_{j_m}$. We will write   $K\langle X \rangle^{gr}$ to denote the graded algebra $K\langle X \rangle$  with  $X = \cup_{g\in G} X_g$.

\begin{definition}
    Let $f(x_{i_1}^{(g_1)},x_{i_2}^{(g_2)}, \dots, x_{i_r}^{(g_r)}) \in K\langle X \rangle^{gr}$ be a polynomial. If $A = \oplus_{g\in G}A_g$ is a $G$-graded algebra then $f$ is a {\sl $G$-graded polynomial identity} (or simply a  {\sl $G$-graded identity}) for $A$ if $f(a_{i_1}^{(g_1)} , a_{i_2}^{(g_2)} , \dots , a_{i_r}^{(g_r)} ) = 0$ in $A$ for every homogeneous substitution $a^{(g_t)} \in  A_{g_t}$. We denote by $\Id_G(A)$ or $T_G(A)$ the ideal of all graded identities of $A$ in $K\langle X \rangle^{gr}$. 
\end{definition}
The ideal $\Id_G(A)$ is closed under all $G$-graded endomorphisms of $K\langle X \rangle^{gr}$; such ideals are called {\sl $G$-graded T-ideals}.

We are going to consider $G = \mathbb{Z}_2$. In this case we write $x_i^{(0)} = y_i$ and $x_i^{(1)} = z_i$, and also $K\langle X \rangle^{gr}$ as $K\langle Y,Z \rangle $. It is known that in characteristic zero, every graded polynomial identity is equivalent to a finite system of multilinear graded identities. Thus, given the free associative $\mathbb{Z}_2$-graded algebra $K \langle Y,Z \rangle$ we consider $P_{n_1,n_2}$  the space of multilinear polynomials in $n_1$ variables of degree $0$ and $n_2$ variables of degree $1$. 

For $n_1$, $n_2 \geq 0$, consider  the action of the group $S_{n_1}\times S_{n_2} $ on $P_{n_1,n_2}$ given by 
\[ 
	(\omega,\tau)f(y_{1},\dots,y_{n_1},z_{1},\dots,z_{n_2}) 
	= f(y_{\omega(1)},\dots,y_{\omega(n_1)},z_{\tau(1)},\dots,z_{\tau(n_2)}),
 \] 
where $(\omega, \tau) \in S_{n_1} \times S_{n_2} $ and $f \in P_{n_1,n_2}$. Then $P_{n_1,n_2}$ is a left $S_{n_1}\times S_{n_2}$-module.  

Given a $\mathbb{Z}_2$-graded algebra $A$, denote by $P_{n_1,n_2}(A)$ the quotient space \[ P_{n_1,n_2}(A) = \frac{ P_{n_1,n_2}}{ P_{n_1,n_2}\cap \Id_{\mathbb{Z}_2}(A)}.\] 
Since $T_2$-ideals are invariant under permutations of the variables, it follows that $P_{n_1,n_2} \cap  \Id_{\mathbb{Z}_2}(A)$ is an $S_{n_1} \times S_{n_2} $-submodule of $P_{n_1,n_2}$. Thus, $P_{n_1,n_2}(A)$  has a structure of an $S_{n_1} \times S_{n_2} $-module, and its character $\chi_{n_1,n_2}(A)$ is called the {\sl $(n_1,n_2)$-th graded cocharacter of $A$}. 

The $S_{n_1} \times S_{n_2}$-characters are obtained from the outer tensor product of irreducible characters of $S_{n_1}$ and $S_{n_2}$, and we have 1--1 correspondence between the irreducible characters of $S_{n_1} \times S_{n_2}$ and the pair of partitions $(\omega, \tau)$, where $\omega \vdash n_1$ and  $\tau \vdash n_2$. We denote by $\chi_{\omega} \otimes \chi_{\tau}$ the irreducible  $S_{n_1} \times S_{n_2}$-character associated to the pair of partitions $(\omega, \tau)$. Thus, 
\begin{equation}
	\chi_{n_1,n_2}(A) = \sum_{(\omega,\tau)\vdash (n_1,n_2)} m_{\omega,\tau} \chi_{\omega}\otimes  \chi_{\tau},
\end{equation}
where  $m_{\omega,\tau}$ is the multiplicity of   $\chi_{\omega}\otimes \chi_{\tau}$. The multiplicity $m_{\omega,\tau}$ is equal to the number of linearly independent highest weight vectors corresponding to a standard Young Tableau of shape $(\omega, \tau)$, modulo $\Id_{\mathbb{Z}_2}(A)$  (see \cite{drensky2000free} for more details).

\subsection{Identities with involution}

 The following type of polynomials identities that will be consider are the identities of an algebra endowed with an involution. To this end, let $X = \{x_1, x_2, \ldots \}$ be a countable set of non-commuting variables and consider $K\langle X, *\rangle = K\langle x_1, x_1^*, x_2, x_2^*, \ldots \rangle$, the {\sl free associative algebra with involution} in $X$ over $K$. By defining $y_i = x_i + x_i^*$ and $z_i = x_i - x_i^*$ for each $i = 1, 2, \ldots$, we consider $K\langle X, *\rangle = K\langle Y, Z \rangle = K\langle y_1, z_1, y_2, z_2, \ldots \rangle$ as generated by symmetric and skew-symmetric variables. The elements of $K\langle X, *\rangle$ will be called $*$-polynomials. 

 Note that we use $K\langle Y, Z \rangle$ to denote both the free $\mathbb{Z}_2$-graded algebra and the free algebra with involution. Therefore, the structure of the algebra $A$ will determine which free associative algebra is being considered. 
 
\begin{definition}
A $*$-polynomial $f\left( y_1,\dots,y_n,z_1,\dots,z_m \right) \in K\langle Y \cup Z \rangle$ is called a  {\sl $*$-polynomial identity} of an algebra with involution $(A,*)$ if \[ f\left( u_1,\dots,u_n,v_1,\dots,v_m \right) = 0 \text{ for all } u_i \in A^+ \text{ and } v_j \in A^- . \] 
\end{definition}
Given an algebra with involution $(A,*)$ we denote by $\Id^*(A)$ or $T^*(A)$ the set of $*$-polynomial identities of  $A$. 

In a similar way to the case of $\mathbb{Z}_2$-graded identities, we consider the vector space of multilinear polynomials in symmetric and skew-symmetric variables, that is, we consider $P_{n_1,n_2}$  the space of multilinear polynomials in $n_1$ symmetric variables   and $n_2$  skew-symmetric variables. Given $A$ an algebra with involution, we denote by $P_{n_1,n_2}(A)$ the quotient space \[ P_{n_1,n_2}(A) = \frac{ P_{n_1,n_2}}{ P_{n_1,n_2}\cap \Id^*(A)}.\] 

 Denote by $\Gamma$ the unitary subalgebra of $K\langle Y, Z\rangle$ generated by the elements from $Z$ and all non-trivial (long) commutators in the free variables of $K\langle Y, Z\rangle$. The elements of $\Gamma$  are called {\sl $Y$-proper polynomials}. We denote by $\Gamma_{n_1,n_2}$ the vector space
	\[  \Gamma_{n_1,n_2} = \Gamma \cap P_{n_1,n_2}. \] 

Given $A$ a unitary algebra with involution, we consider the spaces 	\[  \Gamma_{n_1,n_2}(A) =  \frac{ \Gamma_{n_1,n_2}}{ \Gamma_{n_1,n_2}\cap \Id^*(A)}.  \] 
Since $K$ is of characteristic zero, $\Id^*(A)$is generated, as a $T^*$-ideal, by its multilinear $Y$-proper polynomials.

\subsection{Graded involutions}

\begin{definition}
An involution $*$ on a $G$-graded algebra $A = \oplus_{g \in G} A_g$ is said to be a {\sl graded involution} if $A_g^*= A_g$ for all $g \in  G$.
\end{definition}
 
Similarly to the case of the free associative algebra with involution,  consider the free associative graded algebra with graded involution freely generated by homogeneous symmetric and homogeneous skew-symmetric variables.

Consider the free $\mathbb{Z}_2$-algebra with involution $K\langle Y \cup  Z, * \rangle$, generated by symmetric and skew elements of even and odd degree, that is \[ K\langle Y \cup  Z, * \rangle = K \langle y_{1}^{+}, y_{1}^{-}, z_{1}^{+}, z_{1}^{-}, y_{2}^{+}, y_{2}^{-}, z_{2}^{+}, z_{2}^{-}, \dots \rangle \]
where $y_{i}^{+}$ stands for a symmetric variable of even degree, $y_{i}^{-}$ for a skew variable of even degree, $z_{i}^{+}$ for a
symmetric variable of odd degree and $z_{i}^{-}$ for a skew variable of odd degree. 
Given $A$ a $\mathbb{Z}_2$-graded algebra with graded involution and a graded $*$-polynomial $f(y_{1}^{+},\dots,y_{m}^{+},y_{1}^{-},\dots,y_{n}^{-},z_{1}^{+},\dots,z_{s}^{+},z_{1}^{-},\dots,z_{t}^{-}) \in F\langle Y \cup  Z, * \rangle$, then $f$ is a {\sl graded $*$-polynomial identity} for $A$, if for all $u_{1}^{+},\dots,u_{m}^{+} \in A_{0}^{+}$, $u_{1}^{-},\dots,u_{n}^{-} \in A_{0}^{-}$, $v_{1}^{+},\dots,v_{s}^{+} \in A_{1}^{+}$, and  $v_{1}^{-},\dots,v_{t}^{-} \in A_{1}^{-}$, 
\[ f(u_{1}^{+},\dots,u_{m}^{+}, u_{1}^{-},\dots,u_{n}^{-}, v_{1}^{+},\dots,v_{s}^{+},v_{1}^{-},\dots,v_{t}^{-}) = 0. \]
We denote by $\Id_{\mathbb{Z}_2}^{*}(A) = \{ f \in K\langle Y \cup  Z\rangle  \mid f \equiv 0 \text{ on } A \}$ the $T_2^{*}$-ideal of graded $*$-identities of $A$, i.e., $\Id_{\mathbb{Z}_2}^{*}(A)$ is an ideal of $K\langle Y \cup  Z\rangle$, invariant under all $\mathbb{Z}_2$-graded endomorphisms of the free $\mathbb{Z}_2$-graded algebra commuting with the involution $*$. Again,  in characteristic zero, every graded  $*$-identity is equivalent to a finite system of multilinear graded $*$-identities. So, consider  $P_{n_1,n_2,n_3,n_4}$  the space of multilinear polynomials in $n_1$ symmetric variables of degree $0$, $n_2$ skew-symmetric variables of degree $0$,  $n_3$ symmetric variables of degree $1$   and $n_4$ skew-symmetric variables of degree $1$. 

We consider the cocharacters of $\mathbb{Z}_2$-graded algebras with graded involutions.  For $n_1$, $n_2$, $n_3$, $n_4 \geq 0$, consider  the action of the group $S_{n_1}\times S_{n_2} \times S_{n_3}  \times S_{n_4}$ on $P_{n_1,n_2, n_3, n_4}$ given by 
\[ \begin{split}
	(\omega,\sigma, \tau, \rho)f(y_{1}^{+},\dots,y_{n_1}^{+},y_{1}^{-},\dots,y_{n_2}^{-},z_{1}^{+},\dots,z_{n_3}^{+},z_{1}^{-},\dots,z_{n_4}^{-}) \\ 
	= f(y_{\omega(1)}^{+},\dots,y_{\omega(n_1)}^{+},y_{\sigma(1)}^{-},\dots,y_{\sigma(n_2)}^{-},z_{\tau(1)}^{+},\dots,z_{\tau(n_3)}^{+},z_{\rho(1)}^{-},\dots,z_{\rho(n_4)}^{-}),
\end{split}  \] 
where $(\omega,\sigma, \tau, \rho) \in S_{n_1} \times S_{n_2} \times S_{n_3} \times S_{n_4}$ and $f \in P_{n_1,n_2, n_3, n_4}$. 

Given a $\mathbb{Z}_2$-graded algebra $A$ with a graded involution, denote by $P_{n_1,n_2,n_3,n_4}(A)$ the quotient space \[ P_{n_1,n_2,n_3,n_4}(A) = \frac{ P_{n_1,n_2,n_3,n_4}}{ P_{n_1,n_2,n_3,n_4}\cap \Id_{\mathbb{Z}_2}^*(A)}.\] 
We examine the action of $S_{n_1} \times S_{n_2} \times S_{n_3} \times S_{n_4}$ on $P_{n_1,n_2,n_3,n_4}(A)$.  Since $T_2^*$-ideals are invariant under permutations of the variables, then $P_{n_1,n_2,n_3,n_4} \cap  \Id_{\mathbb{Z}_2}^*(A)$ is a left $S_{n_1} \times S_{n_2} \times S_{n_3} \times S_{n_4}$-submodule of $P_{n_1,n_2,n_3,n_4}$. Thus, $P_{n_1,n_2,n_3,n_4}(A)$  has a structure of left $S_{n_1} \times S_{n_2} \times S_{n_3} \times S_{n_4}$-module, and its character $\chi_{n_1,n_2,n_3, n_4}(A)$ is called the {\sl $(n_1,n_2,n_3,n_4)$-th cocharacter of $A$}. 

The $S_{n_1} \times S_{n_2} \times S_{n_3} \times S_{n_4}$-characters are obtained from the outer tensor product of irreducible characters of $S_{n_1}$, $S_{n_2}$, $S_{n_3}$ and $S_{n_4}$, and we have 1--1 correspondence between the irreducible characters of $S_{n_1} \times S_{n_2} \times S_{n_3} \times S_{n_4}$ and the 4-tuples of partitions $(\omega,\sigma, \tau, \rho)$, where $\omega \vdash n_1 $, $\sigma \vdash n_2$, $\tau \vdash n_3$ and $\rho \vdash n_4$. We denote by $\chi_{\omega}\otimes \chi_{\sigma} \otimes \chi_{\tau}\otimes \chi_{\rho}$ the irreducible  $S_{n_1} \times S_{n_2} \times S_{n_3} \times S_{n_4}$-character associated to the 4-tuple of partitions $(\omega,\sigma, \tau, \rho)$. Thus, 
\begin{equation}
	\chi_{n_1,n_2,n_3, n_4}(A) = \sum_{(\omega,\sigma,\tau,\rho)\vdash (n_1,n_2,n_3,n_4)} m_{\omega,\sigma,\tau,\rho} \chi_{\omega}\otimes \chi_{\sigma} \otimes \chi_{\tau}\otimes \chi_{\rho},
\end{equation}
where  $m_{\omega,\sigma,\tau,\rho}$ is the multiplicity of   $\chi_{\omega}\otimes \chi_{\sigma} \otimes \chi_{\tau}\otimes \chi_{\rho}$. 	The multiplicity $m_{\omega,\sigma,\tau,\rho}$ is equal to the number of linearly independent highest weight vectors corresponding to a standard Young Tableau of shape $(\omega,\sigma, \tau, \rho)$, modulo $\Id_{\mathbb{Z}_2}^*(A)$.

\section{Gradings on $A$}

We fix the algebra $ A =  K(e_{1,1} + e_{3,3}) \oplus Ke_{2,2} \oplus Ke_{1,2} \oplus Ke_{2,3} \oplus Ke_{1,3} $. 

We want to characterize the gradings on $A$ reducing them to elementary ones. As the main result of this section, we have that the gradings on the algebra $A$ are equivalent to elementary gradings.

\begin{definition}
	Given a triple $\hat{g} = (g_1,g_2,g_3)$ of elements of an arbitrary group  $G$ and $1\leq i,j \leq 3$,  we set the degrees for the unitary matrices as $\deg e_{i,j} = g^{-1}_{i} g_{j}$ for $i \neq j$ or $i=j=2$ , and $\deg (e_{1,1} + e_{3,3}) = 1_G$. Given $g \in G$ let $A_g = \Span\{e_{i,j} \mid g^{-1}_{i}g_{j}= g\}$ for $g \neq 1_G$ and $A_{1_G} = \Span\left( \{e_{i,j} \mid g^{-1}_{i}g_{j}= 1_G\} \cup \{ e_{1,1} + e_{3,3} \} \right)$. Then $A = \oplus_{g\in G} A_g$ is a $G$-grading of $A$ called the {\sl  elementary} $G$-grading defined by the $3$-tuple $\hat{g}$.
\end{definition}

We start considering the idempotents of the algebra $A$. 

\begin{proposition}
\label{idempotent}
If $e \in A$ is an idempotent element, then $e$ is conjugated to a diagonal element of $A$. 
\end{proposition}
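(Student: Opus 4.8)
The plan is to determine all idempotents of $A$ explicitly and then exhibit, in each case, an invertible element of $A$ conjugating the given idempotent to a diagonal one. First I would write a general element as $e = d(e_{1,1}+e_{3,3}) + g e_{2,2} + a e_{1,2} + b e_{2,3} + c e_{1,3}$ and impose $e^2 = e$. Comparing entries, the diagonal conditions force $d^2 = d$ and $g^2 = g$, so $d,g \in \{0,1\}$; the $(1,2)$ and $(2,3)$ entries give $a(d+g) = a$ and $b(d+g) = b$, while the $(1,3)$ entry gives $(2d-1)c + ab = 0$.

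Then I would run through the four possibilities for $(d,g)$. When $d = g$, that is $(d,g) = (0,0)$ or $(1,1)$, the factor $d+g \in \{0,2\}$ differs from $1$, forcing $a = b = 0$ and then $c = 0$; hence $e$ is already diagonal, equal to $0$ or to the identity. The interesting cases are $(d,g) = (0,1)$ and $(d,g) = (1,0)$, where $d+g = 1$ leaves $a$, $b$ free and pins down $c$: one obtains $e = e_{2,2} + a e_{1,2} + b e_{2,3} + ab\, e_{1,3}$ in the first case, and $e = (e_{1,1}+e_{3,3}) + a e_{1,2} + b e_{2,3} - ab\, e_{1,3}$ in the second.

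For the case $(d,g) = (0,1)$ I would look for a unipotent conjugator of the form $u = I + \alpha e_{1,2} + \beta e_{2,3} \in A$, which is invertible in $A$ with $u^{-1} = I - \alpha e_{1,2} - \beta e_{2,3} + \alpha\beta\, e_{1,3} \in A$, and compute $u\, e_{2,2}\, u^{-1}$. A direct calculation gives $u\, e_{2,2}\, u^{-1} = e_{2,2} + \alpha e_{1,2} - \beta e_{2,3} - \alpha\beta\, e_{1,3}$, so choosing $\alpha = a$ and $\beta = -b$ reproduces $e$ exactly; equivalently $u^{-1} e\, u = e_{2,2}$, a diagonal element of $A$. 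The case $(d,g) = (1,0)$ then follows at once by passing to complements: $I - e$ is again an idempotent, its diagonal part is $(0,1)$, so it falls into the previous case and is conjugate to $e_{2,2}$; conjugating back shows that $e = I - (I-e)$ is conjugate to $I - e_{2,2} = e_{1,1}+e_{3,3}$, which is diagonal.

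I expect the only point requiring genuine care to be the insistence that the conjugating element lie in $A$ itself and not merely in $UT_3(K)$: since $A$ is a proper subalgebra, not every unipotent matrix is available. This is precisely why I restrict to conjugators of the form $I + \alpha e_{1,2} + \beta e_{2,3}$ (allowing an $e_{1,3}$-term if needed), all of whose summands lie among the chosen matrix units of $A$; the computation above confirms that these suffice, and in fact no $e_{1,3}$-correction is required. Conceptually this is the assertion that idempotents congruent modulo the nilpotent ideal $J = Ke_{1,2}\oplus Ke_{2,3}\oplus Ke_{1,3}$ are conjugate by a unipotent unit, with $A/J \cong K \times K$ accounting for the four diagonal idempotents; I would nonetheless keep the explicit form, since the same conjugator reappears in the later analysis of the gradings.
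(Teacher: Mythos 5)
Your proposal is correct and follows essentially the same route as the paper: write out a general idempotent, solve $e^2=e$ to find the four families indexed by the diagonal $(d,g)$, and conjugate by an explicit unipotent element of $A$ (the paper uses $q = I \pm a e_{1,2} \mp b e_{2,3}$ directly in both nontrivial cases, whereas you derive the $(1,0)$ case from the $(0,1)$ case via $I-e$, a harmless variation). Your explicit attention to the conjugator lying in $A$ itself is a point the paper leaves implicit but which its choice of $q$ also satisfies.
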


\begin{proof}
    If $e = \begin{pmatrix} x & a & c \\ 0 & y & b \\ 0 & 0 & x  \end{pmatrix} \neq \mathbf{0} $ and $e^2 = e$ then  $ \begin{pmatrix} x^2 & a(x+y) & 2xc + ab \\ 0 & y^2 & b(x+y) \\ 0 & 0 & x^2  \end{pmatrix} =  \begin{pmatrix} x & a & c \\ 0 & y & b \\ 0 & 0 & x  \end{pmatrix}$ which implies that   $x,y \in \{0,1\}$ where $x$ and $y$ are not simultaneously zero.  So, we consider the following cases:

\begin{itemize}
    \item If $x = 0$, $y=1$ then 
        $e = \begin{pmatrix} 0 & a & ab \\ 0 & 1 & b \\ 0 & 0 & 0  \end{pmatrix}$. If $q = \begin{pmatrix} 1 & -a & 0 \\ 0 & 1 & b \\ 0 & 0 & 1  \end{pmatrix}$ a simple computation shows that $qeq^{-1} = \begin{pmatrix} 0 & 0 & 0 \\ 0 & 1 & 0 \\ 0 & 0 & 0  \end{pmatrix}$.

    \item If $x = 1$, $y=0$ then 
        $e = \begin{pmatrix} 1 & a & -ab \\ 0 & 0 & b \\ 0 & 0 & 1  \end{pmatrix}$. Choose $q = \begin{pmatrix} 1 & a & 0 \\ 0 & 1 & -b \\ 0 & 0 & 1  \end{pmatrix}$, then $qeq^{-1} = \begin{pmatrix} 1 & 0 & 0 \\ 0 & 0 & 0 \\ 0 & 0 & 1  \end{pmatrix}$.

    \item If $x = 1$, $y=1$ then 
        $e = \begin{pmatrix} 1 & 0 & 0 \\ 0 & 1 & 0 \\ 0 & 0 & 1  \end{pmatrix}$.
\end{itemize}
\end{proof}

\begin{remark} \label{rm:conjg}
   Note that if we have two orthogonal idempotent elements, then each one of these is conjugated to an element of the set $\{ (e_{1,1} + e_{3,3}), e_{2,2} \}$. 
\end{remark}

\begin{remark} 
Let $A = \oplus_{g \in G} A_g$ be graded by an abelian group. Since for $g$, $h \in G$, $[A_g , A_h] \subset A_{gh} + A_{hg} \subset A_{gh}$, it follows that the commutator subalgebra $[A, A]$ is a nilpotent non-zero graded ideal of A.  
\end{remark}

\begin{lemma} \label{lm:orthInden}
    Let $A = \oplus_{g \in G} A_g$ be graded by an abelian group $G$ with identity element $1_G \in G$. Then $A_{1_G}$ contains $2$ orthogonal idempotents.
\end{lemma}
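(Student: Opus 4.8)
The plan is to locate two orthogonal idempotents inside the neutral component by controlling how the grading interacts with the radical $J=[A,A]$. I would first record two standing facts. Since the unit of a graded algebra is always homogeneous of trivial degree, the identity $(e_{1,1}+e_{3,3})+e_{2,2}$ of $A$ lies in $A_{1_G}$; hence it suffices to produce a single nontrivial idempotent $f\in A_{1_G}$, for then $f$ and $1-f$ are orthogonal idempotents of $A_{1_G}$. Since $G$ is abelian, the preceding remark gives that $J=[A,A]$ is a graded nilpotent ideal; a direct computation shows $J=\Span\{e_{1,2},e_{2,3},e_{1,3}\}$ with $J^2=Ke_{1,3}$ and $J^3=0$, while $A/J\cong K\times K$, whose two nontrivial orthogonal idempotents are the images of $e_{1,1}+e_{3,3}$ and of $e_{2,2}$.

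The heart of the argument, and the step I expect to be the main obstacle, is to show that the $G$-grading induced on the semisimple quotient $A/J$ is trivial, i.e. $A/J=(A/J)_{1_G}$. This cannot be read off from the quotient alone: the algebra $K\times K$ does admit a nontrivial $\mathbb{Z}_2$-grading $K(1,1)\oplus K(1,-1)$ whose neutral component contains no nontrivial idempotent, so the obstruction must come from genuine structure of $A$. The key point is that $J^2=Ke_{1,3}$ is a one-dimensional graded subspace, so $e_{1,3}$ is homogeneous, say $e_{1,3}\in A_\delta$. I would then take any $h\neq 1_G$ and any homogeneous $u\in A_h$, write $u=d\,(e_{1,1}+e_{3,3})+g\,e_{2,2}+j$ with $j\in J$, and compute $e_{1,3}u=d\,e_{1,3}$, using $e_{1,3}(e_{1,1}+e_{3,3})=e_{1,3}$, $e_{1,3}e_{2,2}=0$ and $e_{1,3}J=0$. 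As $e_{1,3}u\in A_{\delta h}$ while $e_{1,3}\in A_\delta$ and $\delta h\neq\delta$, a nonzero value would place the nonzero element $e_{1,3}$ in two distinct homogeneous components; hence $d=0$. Therefore the image of $A_h$ in $A/J$ is contained in $K\,\overline{e_{2,2}}$ for every $h\neq 1_G$.

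To conclude that no nontrivial component survives, suppose the image of some $A_h$ equals $K\,\overline{e_{2,2}}\neq 0$ for an $h\neq 1_G$. Then the idempotent $\overline{e_{2,2}}$ is homogeneous of degree $h$, so $\overline{e_{2,2}}=\overline{e_{2,2}}^{\,2}\in (A/J)_{h^2}$; since $\overline{e_{2,2}}\neq 0$ lies in both $(A/J)_h$ and $(A/J)_{h^2}$, this forces $h^2=h$, i.e. $h=1_G$, a contradiction. Hence the image of $A_h$ is zero for all $h\neq 1_G$, and the grading induced on $A/J$ is trivial.

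It remains to lift. Restricting the projection $\pi\colon A\to A/J$ to the subalgebra $A_{1_G}$ yields a surjection $A_{1_G}\twoheadrightarrow A/J\cong K\times K$ whose kernel is the nilpotent ideal $J\cap A_{1_G}$. By the standard idempotent-lifting theorem along a nilpotent ideal (valid in characteristic zero), the orthogonal idempotents $\overline{e_{1,1}+e_{3,3}}$ and $\overline{e_{2,2}}$ of $K\times K$ lift to orthogonal idempotents $f_1,f_2\in A_{1_G}$ with $f_1+f_2=1$. These are the required two orthogonal idempotents in $A_{1_G}$, which completes the proof.
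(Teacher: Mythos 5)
Your proof is correct, but it takes a genuinely different route from the paper's. The paper works inside $A_{1_G}$: it takes the unit $e$ of a simple summand of a maximal semisimple subalgebra of $A_{1_G}$ and, via Proposition~\ref{idempotent}, reduces to the dichotomy that either $e\neq E$ and $\{e,E-e\}$ already works, or else $A_{1_G}$ modulo its radical is $K\cdot E$; in the latter case it shows every homogeneous element of $A$ is nilpotent or invertible and then, by an annihilator argument built around $e_{2,2}$, that $J(A)$ contains no nonzero homogeneous element, contradicting the fact that $[A,A]\subseteq J(A)$ is a nonzero graded ideal. You instead pass to the quotient $A/J\cong K\times K$ and prove that the induced grading there is trivial: the homogeneity of $e_{1,3}$ (forced because $J^2=Ke_{1,3}$ is a one-dimensional graded subspace) kills the $(e_{1,1}+e_{3,3})$-coordinate of any homogeneous element of nontrivial degree, and your idempotent-degree argument then kills the $e_{2,2}$-coordinate; lifting the two orthogonal idempotents along the nilpotent ideal $A_{1_G}\cap J$ finishes. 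Your argument yields the stronger intermediate fact that $A_g\subseteq J$ for all $g\neq 1_G$ (the semisimple part is concentrated in trivial degree), and it avoids both Proposition~\ref{idempotent} and the annihilator computation, at the modest cost of invoking idempotent lifting. One cosmetic remark: lifting idempotents modulo a nilpotent ideal requires no characteristic-zero hypothesis, so that parenthetical is superfluous.
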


\begin{proof} 
Let $E$ be the identity element of $A$. Since $E \in A_{1_G}$, there exists a non trivial maximal semisimple subalgebra $B$ of $A_{1_G}$.
Let $C$ be one of the simple summands of $B$ and let $e$ be its unit element. 

By Proposition \ref{idempotent} it follows that $e$ is conjugated to a diagonal idempotent. Hence either $e$ and $E - e$ are two orthogonal idempotents or $e = E$ and $C = B = \Span{E}$. 

Consider the case $e = E$ and $B = \Span{E}$, we shall show that this leads to a contradiction and hence is impossible. 

First, note that  any homogeneous element of $A$ is either nilpotent or invertible. 

In fact, suppose that $a \in  A_g$ is not nilpotent. For $m$ large enough the elements $a$, $a^2$,\dots, $a^m$ are linearly dependent and homogeneous. It follows that $g$ must have finite order $k$ and $a^k \in A_{1_G}$. Moreover being not nilpotent the element $a^k$ does not lie in the Jacobson radical $J(A_{1_G})$ of $A_{1_G}$. Since $A_{1_G}/J(A_{1_G}) \cong K\cdot E$, it follows that $a^k - \lambda E$ is nilpotent for some $\lambda \in K$, $\lambda \neq 0$. This means that $a^k$ and hence also $a$ is invertible.

Next we prove that the Jacobson radical $J(A)$ of $A$, i.e. the subalgebra of all
strictly upper-triangular matrices does not contain non-zero homogeneous elements. 
 $J(A)$ is homogeneous in the $G$-grading since $\varphi(J(A)) = J(A)$ for any $\varphi  \in  {\rm Aut }A$. 

In fact suppose by contradiction that $0 \neq a \in A_g$ is nilpotent and consider its left annihilator
$L_a = \{x \in  A \mid xa = 0\}$ and right  annihilator $R_a = \{x \in  A \mid ax = 0\}$, these are graded subspaces of $A$. Then, as the elements of $L_a$ and $R_a$ are zero divisors they are not invertible, hence they are nilpotent. By our hypothesis  $a$ is nilpotent, $a = \begin{pmatrix} 0 & a_1 & a_3 \\ 0 & 0 & a_2 \\ 0 & 0 & 0  \end{pmatrix}$ and $a^2 = \begin{pmatrix} 0 & 0 & a_1 a_2 \\ 0 & 0 & 0 \\ 0 & 0 & 0  \end{pmatrix}$. Note that, if $a^2 \neq 0$, $a^2 \in A_{g^2}$ and then $e_{2,2} \in L_{a^2}$, a contradiction. Now, if $a^2 = 0$ then $a = \begin{pmatrix} 0 & a_1 & a_3 \\ 0 & 0 & 0 \\ 0 & 0 & 0  \end{pmatrix}$ or $a = \begin{pmatrix} 0 & 0 & a_3 \\ 0 & 0 & a_2 \\ 0 & 0 & 0  \end{pmatrix}$, so $e_{2,2} \in L_a$ or $e_{2,2} \in R_a$, once again a contradiction.  Therefore, $J(A)$ has no nonzero homogeneous elements. But this gives us a contradiction because $[A, A] \subset J(A)$.  

Thus, we have $e$ and $E - e$ are two orthogonal idempotents belonging to $A_{1_G}$. 
\end{proof}

\begin{lemma} \label{lm:homog}
    Let $A = \oplus_{g \in G} A_g$ be $G$-graded. Then the grading is elementary if and only if all matrix units $e_{i,j}$, $1\leq i < j \leq 3$, $e_{2,2}$ and $(e_{1,1} + e_{3,3})$ are homogeneous. 
\end{lemma}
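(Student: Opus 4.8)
The plan is to prove the biconditional in Lemma~\ref{lm:homog}. The forward direction is immediate: if the grading is elementary, then by the very Definition of an elementary grading each $e_{i,j}$ (for $i<j$), together with $e_{2,2}$ and $e_{1,1}+e_{3,3}$, is assigned a single homogeneous degree $g_i^{-1}g_j$ (respectively $1_G$), so each of these spanning matrices is homogeneous by construction. The substance is in the converse, and this is where I would concentrate the work: assuming all of $e_{1,2}$, $e_{2,3}$, $e_{1,3}$, $e_{2,2}$, and $e_{1,1}+e_{3,3}$ are homogeneous, I must produce a triple $\hat g=(g_1,g_2,g_3)$ realizing the given grading as the elementary one it defines.

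My approach to the converse is to read off the required group elements from the degrees of the homogeneous matrix units. Write $\deg e_{1,2}=\alpha$, $\deg e_{2,3}=\beta$, $\deg e_{1,3}=\gamma$, and recall $\deg e_{2,2}=1_G$ and $\deg(e_{1,1}+e_{3,3})=1_G$; the latter is forced because $e_{1,1}+e_{3,3}=E$ is the identity of $A$, and for an abelian grading the identity lies in $A_{1_G}$. First I would exploit the multiplicative relations among the matrix units inside $A$, namely $e_{1,2}e_{2,3}=e_{1,3}$, which forces the compatibility $\alpha\beta=\gamma$ in $G$; this is the key consistency condition that makes an elementary description possible. Then I would simply \emph{set} $g_1=1_G$, $g_2=\alpha$, and $g_3=\alpha\beta=\gamma$, and verify that the elementary grading defined by $\hat g=(g_1,g_2,g_3)$ reproduces the prescribed degrees: indeed $g_1^{-1}g_2=\alpha$, $g_2^{-1}g_3=\beta$, and $g_1^{-1}g_3=\gamma$, matching $\deg e_{1,2}$, $\deg e_{2,3}$, $\deg e_{1,3}$ respectively, while $e_{2,2}$ and $e_{1,1}+e_{3,3}$ both sit in degree $1_G$ as required.

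The remaining point is to check that this elementary grading coincides with the original one not merely on the listed generators but as a full vector-space decomposition. Here I would argue that $A$ is spanned by the six homogeneous elements $e_{1,1}+e_{3,3}$, $e_{2,2}$, $e_{1,2}$, $e_{2,3}$, $e_{1,3}$ (a basis of $A$, by its defining direct-sum description), so once each basis element lands in the correct homogeneous component of the elementary grading, the two $G$-gradings agree on a spanning set and hence are identical. Since each $A_g$ in both gradings is the span of whichever basis elements carry degree $g$, equality on the basis gives equality of all homogeneous components.

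I expect the main obstacle to be a bookkeeping subtlety rather than a deep difficulty: one must ensure the degrees assigned by homogeneity are genuinely consistent with all the products in $A$, not just $e_{1,2}e_{2,3}=e_{1,3}$. In particular I would double-check the degenerate situations where some of $e_{1,2}$, $e_{2,3}$, $e_{1,3}$ might a priori be forced into overlapping components, and confirm that the relation $\alpha\beta=\gamma$ together with $\deg e_{2,2}=1_G$ fully pins down a valid triple; the homogeneity hypothesis on $e_{2,2}$ is what guarantees $g_2^{-1}g_2=1_G$ is consistent and that no contradiction arises from the idempotent structure established in Proposition~\ref{idempotent} and Lemma~\ref{lm:orthInden}. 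Once consistency is secured, the construction of $\hat g$ is forced and the verification is routine.
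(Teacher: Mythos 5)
Your proof follows the same route as the paper's (the paper is in fact terser: it simply sets $g_1=1_G$, $g_2=\deg e_{1,2}$, $g_3=g_2\deg e_{2,3}$ and asserts that the triple satisfies the required conditions), and your extra verification that $\deg e_{1,3}=\alpha\beta$ via the product $e_{1,2}e_{2,3}=e_{1,3}$ is a welcome addition. One correction, though: $e_{1,1}+e_{3,3}$ is \emph{not} the identity of $A$ --- the identity is $E=e_{1,1}+e_{2,2}+e_{3,3}$ --- so your justification that $\deg(e_{1,1}+e_{3,3})=1_G$ does not stand as written. The fact itself is true and is genuinely needed for the elementary grading you construct to agree with the original one (as is $\deg e_{2,2}=1_G$, which you state without proof): both follow from the observation that a nonzero homogeneous idempotent $e\in A_g$ satisfies $e=e^2\in A_g\cap A_{g^2}$, forcing $g^2=g$ and hence $g=1_G$. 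With that repair the argument is complete and essentially coincides with the paper's.
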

\begin{proof}
    If the $G$-grading is  elementary then those matrices are homogeneous by definition. Suppose that the said matrices are homogeneous. If we set $g_1= 1_G$, $g_2 = \deg e_{1,2}$ and $g_3 = g_2\deg e_{2,3}$ then the triple $(g_1,g_2,g_3)$ satisfies the conditions for the grading to be elementary.
\end{proof}

\begin{lemma} \label{lm:lm}
    Let $A = \oplus_{g \in G} A_g$ be $G$-graded. Then the grading is  elementary if and only if the matrices $e_{2,2}$ and $(e_{1,1} + e_{3,3})$ belong to $A_{1_G}$. 
\end{lemma}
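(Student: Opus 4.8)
The forward implication is immediate from the definition: if the grading is elementary, then $\deg e_{2,2} = g_2^{-1}g_2 = 1_G$ and $\deg(e_{1,1}+e_{3,3}) = 1_G$, so both matrices lie in $A_{1_G}$. So the work is entirely in the converse, where I assume $e_{2,2}, (e_{1,1}+e_{3,3}) \in A_{1_G}$ and aim to deduce that the grading is elementary. The plan is to reduce to Lemma \ref{lm:homog} by showing that all the matrix units $e_{1,2}$, $e_{2,3}$, $e_{1,3}$ are homogeneous.

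Write $f = e_{2,2}$ and observe that $E - f = e_{1,1}+e_{3,3}$, where $E$ is the identity of $A$; by hypothesis both $f$ and $E-f$ are orthogonal idempotents lying in $A_{1_G}$. The key idea is to use the Peirce decomposition of $A$ relative to $f$ and $E-f$, namely $A = fAf \oplus fA(E-f) \oplus (E-f)Af \oplus (E-f)A(E-f)$. The crucial structural fact is that left or right multiplication by a homogeneous element of degree $1_G$ maps $A_g$ into $A_{1_G\cdot g} = A_g$ (and into $A_{g\cdot 1_G}=A_g$) for every $g$; consequently each of the four Peirce components is a \emph{graded} subspace of $A$. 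A direct computation then identifies them as $fAf = Ke_{2,2}$, $fA(E-f) = Ke_{2,3}$, $(E-f)Af = Ke_{1,2}$, and $(E-f)A(E-f) = K(e_{1,1}+e_{3,3}) \oplus Ke_{1,3}$.

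Since the first three components are one-dimensional graded subspaces, each must be concentrated in a single homogeneous degree; hence $e_{2,2}$, $e_{2,3}$, and $e_{1,2}$ are all homogeneous (the case of $e_{2,2}$ being already part of the hypothesis). What remains, and what I expect to be the only delicate point, is $e_{1,3}$: it sits inside the two-dimensional block $(E-f)A(E-f)$ together with $e_{1,1}+e_{3,3}$, so the Peirce splitting alone does not separate the two basis vectors and one cannot conclude homogeneity from dimension count here.

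To finish I would pass from the additive Peirce structure to the multiplicative one, using the relation $e_{1,3} = e_{1,2}e_{2,3}$: since $e_{1,2}$ and $e_{2,3}$ are now known to be homogeneous, say of degrees $h$ and $k$, the product $e_{1,3}$ is homogeneous of degree $hk$. At this stage $e_{1,2}$, $e_{2,3}$, $e_{1,3}$, together with $e_{2,2}$ and $(e_{1,1}+e_{3,3})$, are all homogeneous, and Lemma \ref{lm:homog} gives that the grading is elementary. I note that this argument uses only $A_{1_G}\cdot A_g \subseteq A_g$ and $A_g\cdot A_{1_G}\subseteq A_g$, so it does not require $G$ to be abelian; the genuine obstacle is simply recognizing that the two-dimensional block must be handled via the product $e_{1,2}e_{2,3}$ rather than by the Peirce decomposition itself.
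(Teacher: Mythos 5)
Your proof is correct and follows essentially the same route as the paper: both reduce to Lemma \ref{lm:homog} by showing the Peirce components $(e_{1,1}+e_{3,3})Ae_{2,2}$, $e_{2,2}A(e_{1,1}+e_{3,3})$ and $(e_{1,1}+e_{3,3})A(e_{1,1}+e_{3,3})$ are graded and using that the first two are one-dimensional. You are in fact slightly more careful than the paper on the only delicate point, the two-dimensional block containing $e_{1,3}$, which you settle cleanly via $e_{1,3}=e_{1,2}e_{2,3}$, whereas the paper asserts the homogeneity of $e_{1,3}$ without spelling this step out.
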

\begin{proof}
    It is clear that if the grading is  elementary then $e_{2,2}$, $(e_{1,1} + e_{3,3}) \in A_{1_G}$. 

    Now, assume that  $e_{2,2}$, $(e_{1,1} + e_{3,3}) \in A_{1_G}$ and note that 
    \begin{itemize}
        \item $e_{1,2} \in (e_{1,1} + e_{3,3}) A e_{2,2}$ with $\dim  (e_{1,1} + e_{3,3}) A e_{2,2}= 1$,

        \item $e_{2,3} \in e_{2,2} A (e_{1,1} + e_{3,3})$ with $\dim  e_{2,2} A (e_{1,1} + e_{3,3})= 1$,
        
        \item $e_{1,3} \in (e_{1,1} + e_{3,3}) A (e_{1,1} + e_{3,3}) = K(e_{1,1} + e_{3,3}) \oplus K e_{1,3}$. 
   \end{itemize}
    Since $(e_{1,1} + e_{3,3}) A e_{2,2}$, $(e_{1,1} + e_{3,3}) A (e_{1,1} + e_{3,3})$ and $e_{2,2} A (e_{1,1} + e_{3,3})$ are graded subalgebras then we can conclude that the elements $e_{1,2}$, $e_{1,3}$ and $e_{1,2}$ are homogeneous. Thus, by Lemma \ref{lm:homog} the grading is  elementary.
\end{proof}

\begin{lemma} \label{lm:idempDiago}
    Let $A = \oplus_{g \in G} A_g$ be $G$-graded. Then there exist two orthogonal idempotents that are simultaneously diagonalizable and belonging to $A_{1_G}$. 
\end{lemma}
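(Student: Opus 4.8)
The plan is to deduce the statement directly from Lemma~\ref{lm:orthInden} together with Proposition~\ref{idempotent}, so that the lemma becomes essentially a repackaging of facts already established. First I would invoke Lemma~\ref{lm:orthInden} to produce two orthogonal idempotents lying in $A_{1_G}$. Inspecting its proof, these may be taken to be $e$ and $E-e$, where $E$ denotes the unit of $A$ and $e$ is the unit of a simple summand of a maximal semisimple subalgebra of $A_{1_G}$. Since $E \in A_{1_G}$, both $e$ and $E-e$ belong to $A_{1_G}$; by construction $e(E-e) = (E-e)e = 0$, so they are orthogonal; and the excluded case $e = E$ was already ruled out in that proof, so both idempotents are nonzero and proper.

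Next I would diagonalize them \emph{simultaneously}. By Proposition~\ref{idempotent} there is an invertible element $q \in A$ with $q e q^{-1}$ a diagonal idempotent of $A$. The key observation is that the same $q$ works for the complementary idempotent: conjugation by $q$ is linear and fixes the identity, so $q(E-e)q^{-1} = E - q e q^{-1}$, which is again diagonal. Hence $e$ and $E-e$ are simultaneously diagonalizable by a single $q \in A$, while remaining (before conjugation) in $A_{1_G}$. Equivalently, one may argue abstractly that $e$ and $E-e$ are commuting idempotents, hence commuting diagonalizable elements, and such elements are simultaneously diagonalizable; the extra content supplied by Proposition~\ref{idempotent} is that the diagonalizing element can be chosen inside $A$ itself.

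To record the precise form of the result, I would then note that by Remark~\ref{rm:conjg} the two diagonal idempotents $q e q^{-1}$ and $q(E-e)q^{-1}$ are each conjugate into $\{\, e_{1,1}+e_{3,3},\ e_{2,2}\,\}$; being orthogonal, nonzero, and summing to $E$, they must in fact be exactly $e_{2,2}$ and $e_{1,1}+e_{3,3}$ in some order. This shows that the single conjugation carries the pair $\{e, E-e\}$ onto the standard diagonal idempotents of $A$, which is the form in which the lemma will be used to transport the grading in the sequel.

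I do not expect a serious obstacle here, since the genuinely new input has already been isolated: existence of the orthogonal idempotents in $A_{1_G}$ lives in Lemma~\ref{lm:orthInden} (which is where the abelianness of $G$ enters, via $[A,A]\subset J(A)$ being homogeneous), and diagonalizability \emph{inside} $A$ lives in Proposition~\ref{idempotent}. The only point that genuinely needs care is the simultaneity, namely that one conjugating element serves for both idempotents; this is handled cleanly by working with the complementary pair summing to $E$, so that diagonalizing $e$ automatically diagonalizes $E-e$.
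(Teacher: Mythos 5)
Your proof is correct, but it takes a genuinely different route from the paper's. You obtain the orthogonal pair $e$, $E-e$ in $A_{1_G}$ by citing Lemma~\ref{lm:orthInden}, and you get simultaneity by applying Proposition~\ref{idempotent} to $e$ alone and noting that conjugation fixes $E$, so $q(E-e)q^{-1}=E-qeq^{-1}$ is automatically diagonal; your closing step via Remark~\ref{rm:conjg}, identifying the diagonalized pair as exactly $\{e_{2,2},\, e_{1,1}+e_{3,3}\}$, is precisely the form needed in Theorem~\ref{th:gradingofA}. The paper instead gives a self-contained construction: it locates a homogeneous idempotent $t$ inside the graded subalgebra $\Span(e_{1,2},e_{1,3},e_{2,3},e_{2,2})$ (the two-sided annihilator of $J^2$), uses the explicit form of the idempotents there to write down the change-of-basis matrix $P$, and then makes the same observation you do, that $I-t$ is diagonalized by the same $P$. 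Your version is more economical in that it recycles results already proved rather than redoing the construction. The one trade-off to flag explicitly: Lemma~\ref{lm:orthInden} assumes $G$ abelian (that hypothesis is what makes $[A,A]\subseteq J(A)$ a nonzero graded ideal in its proof), whereas Lemma~\ref{lm:idempDiago} is stated without it and the paper's argument does not pass through Lemma~\ref{lm:orthInden}. Since the lemma is only ever invoked in Theorem~\ref{th:gradingofA}, where $G$ is abelian, this costs nothing in context, but your proof establishes the statement only under that extra hypothesis and should say so.
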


\begin{proof}
    Suppose $A$ is $G$-graded, then the identity matrix $E$ is homogeneous. 
    
    Also $J$, the Jacobson radical of $A$ is homogeneous. So is $J^2=\Span(e_{13})$. By the Wedderburn-Malcev theorem, $A\cong K+K+J$, a direct sum of vector spaces, where $K+K$ is a subalgebra. Hence we have two orthogonal idempotents. We show we can choose these homogeneous and simultaneously diagonalizable. 

    First take the intersection of the left and right annihilators of $J^2$, it is homogeneous. But this is exactly the span $\Span(e_{1,2},e_{1,3},e_{2,3},e_{2,2})$. There exists an idempotent in it, moreover all idempotents in it are of the form $\begin{pmatrix}0&a&ac\\ 0&1&c\\ 0&0&0 \end{pmatrix}$. 

    Suppose the element above is the homogeneous idempotent, call it $t$. 
    Now, considering $A$ acting on a $3$-dimensional vector space with a basis $\{e_1, e_2, e_3\}$, we obtain 
    \[
    te_1=0, \quad te_2=ae_1+e_2, \quad te_3=ace_1+ce_2.
    \]
    Now consider the basis $f_1$, $f_2$, $f_3$ such that $tf_1=0$, $tf_2=f_2$, $tf_3=0$. Such a basis can be obtained as
    \[
    f_1=e_1, \quad f_2=ae_1+e_2, \quad f_3= -ce_2+e_3.
    \]
    ($f_1$ and $f_3$ are a basis of the kernel of $t$, and $f_2$ of its image.)
    The change of basis (matrix) is $P=\begin{pmatrix} 1&a&0\\ 0&1&-c\\
    0&0&1\end{pmatrix}$.  
    Form $I-t$, this is an idempotent which is orthogonal to $t$, and clearly the same $P$ diagonalizes it. As $t$ is homogeneous and $I$ also is, then $I-t$ is homogeneous. 
\end{proof}

\begin{theorem}\label{th:gradingofA}
    Let $G$ be an abelian group and $K$ a field. Suppose that the $K$-algebra $A = \oplus_{g \in G} A_g$ is $G$-graded. Then $A$, as a $G$-graded algebra, is isomorphic to $A$ with an  elementary $G$-grading.
\end{theorem}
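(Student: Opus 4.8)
The plan is to deduce the theorem from Lemma~\ref{lm:lm} by moving the given grading to an isomorphic one through a conjugation that happens to be an automorphism of $A$. First I would invoke Lemma~\ref{lm:idempDiago} to produce two orthogonal idempotents $t$ and $E-t$ in $A_{1_G}$ that are simultaneously diagonalizable; as the proof of that lemma shows, the diagonalizing matrix can be taken to be an upper unitriangular $P = \begin{pmatrix} 1 & a & 0 \\ 0 & 1 & -c \\ 0 & 0 & 1 \end{pmatrix}$ with $P^{-1} t P = e_{2,2}$, and consequently $P^{-1}(E-t)P = E - e_{2,2} = e_{1,1}+e_{3,3}$.

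The crucial step will be to check that conjugation by $P$ restricts to an automorphism of $A$, and not merely of $UT_3(K)$. I would verify this by a direct computation: for an arbitrary $M = \begin{pmatrix} d & a_1 & c_1 \\ 0 & g & b_1 \\ 0 & 0 & d \end{pmatrix} \in A$, the conjugate $P^{-1}MP$ again has diagonal pattern $(d,g,d)$ with an arbitrary strictly upper triangular part, so it lies in $A$; as the map is linear, injective and $\dim A = 5$ is finite, this forces $P^{-1}AP = A$. Hence $\varphi(x) = P^{-1}xP$ is an algebra automorphism of $A$.

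Next I would set $A'_g = \varphi(A_g)$ and observe that $\varphi$ is then, by construction, an isomorphism of graded algebras from $(A,\{A_g\})$ onto $(A,\{A'_g\})$, carrying each homogeneous component of degree $g$ to the one of the same degree; thus it is enough to show the new grading $\{A'_g\}$ is elementary. Since $e_{2,2} = \varphi(t)$ and $e_{1,1}+e_{3,3} = \varphi(E-t)$ both lie in $\varphi(A_{1_G}) = A'_{1_G}$, Lemma~\ref{lm:lm} applies directly and shows $\{A'_g\}$ is elementary. Transporting back through $\varphi$, the original grading is isomorphic to an elementary $G$-grading, which is exactly the assertion.

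The real content is carried by the preceding lemmas, so I do not expect serious difficulty in assembling them; the one point that genuinely needs care — and the step I would flag as the main obstacle — is confirming that the particular diagonalizing matrix $P$ normalizes $A$. That is what guarantees the change of basis is a graded isomorphism of $A$ itself, rather than only an equivalence between two copies of $A$ embedded in $UT_3(K)$.
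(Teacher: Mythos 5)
Your proof follows essentially the same route as the paper: apply Lemma~\ref{lm:idempDiago} to get two simultaneously diagonalizable orthogonal idempotents in $A_{1_G}$, transport the grading by the conjugating matrix $P$, and conclude via Lemma~\ref{lm:lm}. The one point you add beyond the paper's argument --- explicitly checking that the unitriangular $P$ normalizes $A$, so that conjugation is an automorphism of $A$ itself --- is correct (indeed $P$ and $P^{-1}$ both lie in $A$) and fills a step the paper leaves implicit.
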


\begin{proof}
    Let $A = \oplus_{g \in G} A_g$ be $G$-graded. By Lemma \ref{lm:idempDiago} the homogeneous component $A_{1_G}$ contains two orthogonal idempotents simultaneously diagonalizable by a element $P$.  So as a $G$-graded algebra $A$ is isomorphic to $A' = \oplus_{g\in G} A'_g$ where $A'_g = P^{-1}A_g P$. Note that in $A'$ the matrix $e_{2,2}$ and the matrix identity $E$ lie in $A'_1$. Therefore, by Lemma \ref{lm:lm}, $A'$ has an  elementary $G$-grading.
\end{proof}

Next, we will consider the graded identities of the algebra $A$, where the grading is given by the group $\mathbb{Z}_2$. 
Due to Theorem \ref{th:gradingofA}, we have $3$ different non-trivial $\mathbb{Z}_2$-gradings on  $A$. Thus, for $A= A_0 \oplus A_1$ we have the possibilities: 
\begin{equation}\label{eq:gra1}
    A_0 = \left\{  
\begin{pmatrix} 
d & 0 & c \\ 
0 & g & 0 \\ 
0 & 0 & d  
\end{pmatrix} \mid d,g,c \in K  \right\}, \qquad 
A_1 = \left\{  \begin{pmatrix} 
0 & a & 0 \\ 
0 & 0 & b \\ 
0 & 0 & 0  
\end{pmatrix} \mid a,b \in K  \right\},
\end{equation}
or 
\begin{equation}\label{eq:gra2}
    A_0 = \left\{  
\begin{pmatrix} 
d & a & 0 \\ 
0 & g & 0 \\ 
0 & 0 & d  
\end{pmatrix} \mid d,g,a \in K  \right\}, \qquad 
A_1 = \left\{  \begin{pmatrix} 
0 & 0 & c \\ 
0 & 0 & b \\ 
0 & 0 & 0  
\end{pmatrix} \mid c, b \in K  \right\},
\end{equation}
or
\begin{equation}\label{eq:gra3}
	A_0 = \left\{  
	\begin{pmatrix} 
		d & 0 & 0 \\ 
		0 & g & b \\ 
		0 & 0 & d  
	\end{pmatrix} \mid d,g,b \in K \right\}, \qquad 
	A_1 = \left\{  \begin{pmatrix} 
		0 & a & c \\ 
		0 & 0 & 0 \\ 
		0 & 0 & 0  
	\end{pmatrix} \mid a,c \in K  \right\}. 
\end{equation}
Denote  by $\mathcal{A}^1$ the graded algebra $A$ with grading given by (\ref{eq:gra1}),   $\mathcal{A}^2$ the graded algebra $A$ with grading given by (\ref{eq:gra2}), and $\mathcal{A}^3$ the graded algebra $A$ with grading given by (\ref{eq:gra3}).

\section{Graded polynomial identities of $\mathcal{A}^1$}

It is easy to see that the followings are graded identities of $\mathcal{A}^1$. 

\begin{itemize}
    \item[(a)] $[y_1, y_2] = 0$,
    \item[(b)] $z_1z_2z_3 = 0$,
    \item[(c)] $[y_1,z_1z_2] = 0$.  
\end{itemize}
	The identity (c) is a direct consequence of identity (a).

   Note that $P_{m,n}(\mathcal{A}^1) = 0$ if $n\geq 3$. So, we consider the cases when $n=0,1,2$.  In the case $n=0$, by the identity (a) it is clear that 
   \begin{equation} \label{eq:A_1n=0}
        P_{m,0}(\mathcal{A}^1) = \Span\{y_1y_2\cdots y_m\}.  
   \end{equation}

\subsection{Case $n=1$}
 Given a monomial $\mu$ in $P_{m,1}(\mathcal{A}^1)$, from (a) we can reorder the variables $y_i$ such that modulo $\Id_{\mathbb{Z}_2}(\mathcal{A}^1)$  \begin{equation} \label{eq:A_1n=1}
     \mu = y_{i_1}y_{i_2}\cdots y_{i_s}z_1 y_{j_1}y_{j_2}\cdots y_{j_t} 
 \end{equation} where $ i_1 < i_2 \cdots < i_s$, $ j_1 < j_2 \cdots < j_t$ and $s+t = m$, $s,t \geq 0$. 

\begin{proposition}\label{pro:A_1n=1}
 The monomials of the form (\ref{eq:A_1n=1}) are linearly independent modulo $\Id_{\mathbb{Z}_2}(\mathcal{A}^1)$.
\end{proposition}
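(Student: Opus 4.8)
The plan is to prove linear independence directly. I take a linear combination $f=\sum_S \lambda_S \mu_S$ of the monomials (\ref{eq:A_1n=1}), assume $f\in\Id_{\mathbb{Z}_2}(\mathcal{A}^1)$, and show every $\lambda_S=0$. Since the variables $y_{i_1},\dots,y_{i_s}$ to the left of $z_1$ and $y_{j_1},\dots,y_{j_t}$ to the right are each written in increasing order, a monomial of the form (\ref{eq:A_1n=1}) is completely determined by the subset $S=\{i_1,\dots,i_s\}\subseteq\{1,\dots,m\}$ of indices appearing before $z_1$. I therefore index the monomials by subsets $S$, with $T=\{1,\dots,m\}\setminus S$ the indices appearing after $z_1$, so that there are $2^m$ of them.

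The key step is to evaluate $\mu_S$ on a parametrized homogeneous substitution. I set
\[
 y_i \longmapsto d_i(e_{1,1}+e_{3,3})+g_i e_{2,2}+c_i e_{1,3}\in A_0, \qquad z_1\longmapsto a\,e_{1,2}+b\,e_{2,3}\in A_1,
\]
with scalar parameters $d_i,g_i,c_i,a,b\in K$. Using the relations $e_{p,q}e_{r,s}=\delta_{q,r}e_{p,s}$, a direct computation shows that a product of even elements multiplies ``diagonally'' in the $d$- and $g$-coordinates, and that each $e_{1,3}$-part is annihilated once it meets $z_1$ on either side. The product then collapses to
\[
 \mu_S = a\Bigl(\textstyle\prod_{i\in S}d_i\Bigr)\Bigl(\textstyle\prod_{j\in T} g_j\Bigr) e_{1,2} + b\Bigl(\textstyle\prod_{i\in S} g_i\Bigr)\Bigl(\textstyle\prod_{j\in T} d_j\Bigr) e_{2,3},
\]
so the value depends only on which even factors sit to the left and which to the right of $z_1$, and not at all on the $c_i$.

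From here the conclusion is short. Reading off the $e_{1,2}$-entry of $f=\sum_S\lambda_S\mu_S=0$, and using that $a$ is a free parameter together with the fact that $K$ is infinite, I obtain the identity $\sum_S \lambda_S \prod_{i\in S}d_i\prod_{j\in T} g_j=0$ in the commuting variables $d_1,\dots,d_m,g_1,\dots,g_m$. For distinct subsets $S$ these are distinct monomials, since each index $k$ contributes the factor $d_k$ when $k\in S$ and $g_k$ when $k\notin S$; hence they are linearly independent as commutative polynomials, forcing $\lambda_S=0$ for every $S$.

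The only delicate point is the bookkeeping in the matrix computation leading to the displayed formula for $\mu_S$: I must verify carefully that, after $z_1$ is inserted, the $(e_{1,1}+e_{3,3})$ and $e_{2,2}$ parts of the left and right factors route correctly into the $e_{1,2}$ and $e_{2,3}$ entries, while every $e_{1,3}$ contribution vanishes. Once that product formula is established the separation of the monomials is automatic; in fact the single entry $e_{1,2}$ already distinguishes all $2^m$ monomials, so the $e_{2,3}$ component is not even needed.
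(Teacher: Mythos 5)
Your proof is correct and follows essentially the same strategy as the paper: evaluate each monomial on even elements supported on $K(e_{1,1}+e_{3,3})\oplus Ke_{2,2}$ and an odd element involving $e_{1,2}$, and observe that the resulting value records exactly which $y$'s lie to the left and to the right of $z_1$. The only difference is cosmetic --- you use a generic substitution and linear independence of the monomials $\prod_{i\in S}d_i\prod_{j\notin S}g_j$ in a polynomial ring, while the paper isolates a single coefficient $\alpha_{I_0,J_0}$ with a targeted $0/1$ evaluation.
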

\begin{proof}
Let $f$ be a sum of monomials of the form (\ref{eq:A_1n=1}) \[ f = \sum_{I,J}\alpha_{I,J}y_{i_1}y_{i_2}\cdots y_{i_s}z_1 y_{j_1}y_{j_2}\cdots y_{j_t}.\]
Note that if $y_{i_k} = \beta_{i_k}(e_{1,1} + e_{3,3})$, $z_1 = \lambda e_{1,2}$ and $y_{j_l} = \eta_{j_l}e_{2,2}$ then \[ f = \sum_{I,J}\alpha_{I,J}\beta_{i_1}\beta_{i_2}\cdots \beta_{i_s}\lambda \eta_{j_1}\eta_{j_2}\cdots \eta_{j_t} \cdot e_{1,2}.\]
Suppose that $f$ is a polynomial identity of $\mathcal{A}^1$ and that there exist $\alpha_{I_0,J_0} \neq 0$. Consider the evaluation $y_{i_k} = e_{1,1} + e_{3,3}$ for $i_{k} \in I_0$, $y_{j_k} = e_{2,2}$ for $j_{k} \in J_0$ and $z_1= e_{1,2}$, then $\alpha_{I_0,J_0} = 0$, a contradiction. So,  the monomials of the form (\ref{eq:A_1n=1}) are linearly independent. 
\end{proof}

\subsection{Case $n=2$}

 Given a monomial $\mu$ in $P_{m,2}(\mathcal{A}^1)$, again from (a) we can reorder the variables $y_i$ such that modulo $\Id_{\mathbb{Z}_2}(\mathcal{A}^1)$  \begin{equation*} 
     \mu = y_{i_1}y_{i_2}\cdots y_{i_s}z_{l_1} y_{j_1}y_{j_2}\cdots y_{j_t} z_{l_2} y_{h_1}y_{h_2}\cdots y_{h_r}
 \end{equation*} where $ i_1 < i_2 \cdots < i_s$, $ j_1 < j_2 \cdots < j_t$, $ h_1 < h_2 \cdots < h_r$  and $s+t+r = m$, $s,t,r \geq 0$.  
As a consequence of (c) we have $z_{l_1}y_{j}z_{l_2}y_h = y_h z_{l_1}y_{j}z_{l_2} $. Therefore, $P_{m,2}(\mathcal{A}^1)$ is spanned by monomials of the form  
\begin{equation} \label{eq:A_1n=2}
     y_{i_1}y_{i_2}\cdots y_{i_s}z_{l_1} y_{j_1}y_{j_2}\cdots y_{j_t} z_{l_2}
 \end{equation} 
 where $ i_1 < i_2 \cdots < i_s$, $ j_1 < j_2 \cdots < j_t$,   and $s+t = m$, $s,t \geq 0$.  

 \begin{proposition} \label{pro:A_1n=2}
      The monomials of the form (\ref{eq:A_1n=2}) are linearly independent modulo $\Id_{\mathbb{Z}_2}(\mathcal{A}^1)$.
 \end{proposition}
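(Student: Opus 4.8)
The plan is to prove linear independence exactly as in Proposition~\ref{pro:A_1n=1}, by producing for each monomial of the form (\ref{eq:A_1n=2}) an explicit homogeneous substitution that isolates its coefficient. Such a monomial is determined by two pieces of data: the order in which the two degree-one variables occur, i.e. the pair $(l_1,l_2)$, which is either $(1,2)$ or $(2,1)$; and the subset $I=\{i_1<\cdots<i_s\}\subseteq\{1,\dots,m\}$ of indices of the $y$'s preceding $z_{l_1}$, the remaining indices $J=\{1,\dots,m\}\setminus I$ being those of the $y$'s sitting between the two $z$'s (written in increasing order). So I assume that $f=\sum_{I,(l_1,l_2)}\alpha_{I,(l_1,l_2)}\,y_{i_1}\cdots y_{i_s}\,z_{l_1}\,y_{j_1}\cdots y_{j_t}\,z_{l_2}$ lies in $\Id_{\mathbb{Z}_2}(\mathcal{A}^1)$ and aim to conclude that every coefficient vanishes.

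First I would compute the value of a single monomial under the homogeneous evaluation $y_k=\beta_k(e_{1,1}+e_{3,3})+\gamma_k e_{2,2}$ and $z_i=a_ie_{1,2}+b_ie_{2,3}$, with scalars $\beta_k,\gamma_k,a_i,b_i\in K$. Using only the multiplication rules among $e_{1,1}+e_{3,3}$, $e_{2,2}$, $e_{1,2}$, $e_{2,3}$ (in particular the orthogonality $(e_{1,1}+e_{3,3})e_{2,2}=0$, together with $(e_{1,1}+e_{3,3})e_{1,2}=e_{1,2}$, $e_{1,2}e_{2,2}=e_{1,2}$, $e_{2,3}(e_{1,1}+e_{3,3})=e_{2,3}$, $e_{2,2}e_{2,3}=e_{2,3}$ and $e_{1,2}e_{2,3}=e_{1,3}$), a direct computation (collapsing each diagonal $y$-block first) shows that
\[
 y_{i_1}\cdots y_{i_s}\,z_{l_1}\,y_{j_1}\cdots y_{j_t}\,z_{l_2}\;=\;\Big(\prod_{k\in I}\beta_k\Big)\Big(\prod_{k\in J}\gamma_k\Big)\,a_{l_1}b_{l_2}\,e_{1,3}.
\]
Thus the value always lands in $Ke_{1,3}$, and its coefficient factorises neatly into a part coming from the $y$'s (the product of the $\beta$'s over the front block times the product of the $\gamma$'s over the middle block) and a part $a_{l_1}b_{l_2}$ that detects the two $z$'s.

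With this formula in hand the argument finishes in two stages. To separate the two possible $z$-orders I would first set $z_1=e_{1,2}$ and $z_2=e_{2,3}$, i.e. $a_1=1$, $b_1=0$, $a_2=0$, $b_2=1$; then $a_{l_1}b_{l_2}=1$ for $(l_1,l_2)=(1,2)$ and $a_{l_1}b_{l_2}=0$ for $(l_1,l_2)=(2,1)$, so only the monomials with $z_1$ before $z_2$ survive, and $f\equiv 0$ reduces to $\sum_I\alpha_{I,(1,2)}\big(\prod_{k\in I}\beta_k\big)\big(\prod_{k\notin I}\gamma_k\big)=0$ for all $\beta_k,\gamma_k$. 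To isolate a fixed $I_0$ I then specialise $\beta_k=1,\gamma_k=0$ for $k\in I_0$ and $\beta_k=0,\gamma_k=1$ for $k\notin I_0$; every term with $I\neq I_0$ then acquires a zero factor while the $I_0$-term equals $1$, forcing $\alpha_{I_0,(1,2)}=0$. Repeating with the swapped choice $z_1=e_{2,3}$, $z_2=e_{1,2}$ instead annihilates the $(1,2)$-monomials and yields $\alpha_{I_0,(2,1)}=0$.

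The work is essentially bookkeeping, so I do not expect a serious obstacle; the only point requiring care is that a $y$-variable could in principle carry an $e_{1,3}$-component, which might seem to contaminate the final entry. This never interferes, because to establish linear independence one is free to evaluate the $y$'s with no $e_{1,3}$-part, and the displayed formula shows that these evaluations already recover every coefficient. The mild novelty compared with the $n=1$ case is the presence of the two orderings of the odd variables, which is exactly what the two complementary choices of $(a_i,b_i)$ above are designed to disentangle.
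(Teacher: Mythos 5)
Your proposal is correct and follows essentially the same route as the paper: evaluate the $y$'s on the diagonal part $K(e_{1,1}+e_{3,3})\oplus Ke_{2,2}$ and the $z$'s on $Ke_{1,2}\oplus Ke_{2,3}$, observe that each monomial lands in $Ke_{1,3}$ with coefficient $\bigl(\prod_{k\in I}\beta_k\bigr)\bigl(\prod_{k\in J}\gamma_k\bigr)a_{l_1}b_{l_2}$, and then specialize to $e_{1,1}+e_{3,3}$, $e_{2,2}$, $e_{1,2}$, $e_{2,3}$ to isolate a single coefficient. Your treatment is in fact slightly more explicit than the paper's in separating the two possible orderings of $z_1,z_2$ and in noting that the $e_{1,3}$-component of the $y$'s is harmless, but the underlying argument is identical.
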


\begin{proof}
    Note that if $ y_{h_k} = \begin{pmatrix} a_{h_k} & 0 & c_{h_k} \\
    0 & b_{h_k} & 0 \\
    0 & 0 & a_{h_k} 
    \end{pmatrix} $ and  $ z_{l_i} = \begin{pmatrix} 0 & d_{l_i} & 0 \\
    0 & 0 & e_{l_i} \\
    0 & 0 & 0 
    \end{pmatrix} $ then \[ y_{i_1}y_{i_2}\cdots y_{i_s}z_{l_1} y_{j_1}y_{i_2}\cdots y_{j_t} z_{l_2} = a_{i_1}\cdots a_{i_s} d_{l_1} b_{j_1}\cdots b_{j_t}d_{l_2} \cdot e_{1,3}. \]
    Let $f$ be a sum of monomials of the form (\ref{eq:A_1n=2}) 
    \begin{align*} f &= \sum_{I,J,L}\alpha_{I,J,L}y_{i_1}y_{i_2}\cdots y_{i_s}z_{l_1} y_{j_1}y_{j_2}\cdots y_{j_t}z_{l_2}\\ 
& = \sum_{I,J,L}\alpha_{I,J,L}a_{i_1}\cdots a_{i_s} d_{l_1} b_{j_1}\cdots b_{j_t}e_{l_2} \cdot e_{1,3}.
\end{align*}
    Suppose $f$ is a polynomial identity of $\mathcal{A}^1$ and  there exists $\alpha_{I_0,J_0,L_0} \neq 0$. Consider the evaluation $y_{i_k} = e_{1,1} + e_{3,3}$ for $i_{k} \in I_0$, $y_{j_k} = e_{2,2}$ for $j_{k} \in J_0$, $z_{l_1}= e_{1,2}$, and $z_{l_2}= e_{2,3}$. Then, $\alpha_{I_0,J_0,L_0} = 0$, a contradiction. So,  the monomials in the form (\ref{eq:A_1n=2}) are linearly independent. 
\end{proof}

As a consequence of (\ref{eq:A_1n=0}), Proposition \ref{pro:A_1n=1} and Proposition \ref{pro:A_1n=2}, we have the following result. 

\begin{theorem} \label{th:A_1}
The identities (a) and (b),  form a basis for the $\mathbb{Z}_2$-graded identities for the algebra $\mathcal{A}^1$.
\end{theorem}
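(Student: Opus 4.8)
The plan is to show that the $T_2$-ideal $I$ generated by the identities (a) and (b) coincides with $\Id_{\mathbb{Z}_2}(\mathcal{A}^1)$. Since (a) and (b) are graded identities of $\mathcal{A}^1$, the inclusion $I \subseteq \Id_{\mathbb{Z}_2}(\mathcal{A}^1)$ holds automatically, and because $\operatorname{char} K = 0$ it suffices to prove that $P_{m,n} \cap I = P_{m,n} \cap \Id_{\mathbb{Z}_2}(\mathcal{A}^1)$ for every pair $(m,n)$. The strategy for each such pair is a dimension count: I would exhibit a set $M_{m,n}$ of monomials that spans $P_{m,n}$ modulo $I$ and whose images were already shown to be linearly independent modulo $\Id_{\mathbb{Z}_2}(\mathcal{A}^1)$. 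The natural surjection $P_{m,n}/(P_{m,n}\cap I) \twoheadrightarrow P_{m,n}(\mathcal{A}^1)$ then carries $M_{m,n}$ onto a linearly independent family, forcing this surjection to be an isomorphism, and hence $P_{m,n}\cap I = P_{m,n}\cap \Id_{\mathbb{Z}_2}(\mathcal{A}^1)$.

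For $n = 0,1,2$ the required spanning sets are exactly the reduced monomials in (\ref{eq:A_1n=0}), (\ref{eq:A_1n=1}) and (\ref{eq:A_1n=2}). The point I must verify is that every reordering used to bring a monomial into reduced form is a consequence of $I$ and not merely of $\Id_{\mathbb{Z}_2}(\mathcal{A}^1)$: reordering the $y$'s uses only (a), and the relation $z_{l_1}y_j z_{l_2}y_h = y_h z_{l_1}y_j z_{l_2}$ invoked when $n=2$ is (c), which is itself a consequence of (a). Linear independence modulo $\Id_{\mathbb{Z}_2}(\mathcal{A}^1)$ is then supplied by Proposition \ref{pro:A_1n=1} and Proposition \ref{pro:A_1n=2} (and, for $n=0$, by the evaluation $y_i = e_{2,2}$).

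The step I expect to be the main obstacle is the case $n \geq 3$, where I must show that \emph{every} multilinear monomial with at least three odd variables lies in $I$, even though such monomials typically carry $y$'s inserted between the $z$'s, so (b) cannot be applied to three adjacent $z$'s directly. The key observation is that an odd-degree element of the free algebra may be substituted for an odd variable of (b): grouping a monomial into blocks of the form $y_{k_1}\cdots y_{k_p} z_l$, each such block is homogeneous of odd degree, and any multilinear monomial containing three $z$'s can be written as $u\,w_1 w_2 w_3\,v$ with $w_1,w_2,w_3$ odd blocks of this kind (absorbing leading $y$'s into $w_1$ if necessary). Since $w_1 w_2 w_3$ is a substitution instance of $z_1 z_2 z_3$, it belongs to $I$, and therefore so does the whole monomial. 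This gives $P_{m,n}\subseteq I$ for $n \geq 3$, matching the vanishing $P_{m,n}(\mathcal{A}^1)=0$ recorded earlier, and completes the dimension count in all degrees. As a $T_2$-ideal in characteristic zero is determined by its multilinear components, we conclude $I = \Id_{\mathbb{Z}_2}(\mathcal{A}^1)$, so (a) and (b) form a basis.
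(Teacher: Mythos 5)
Your proposal is correct and follows essentially the same route as the paper: reduce multilinear monomials to the canonical forms (\ref{eq:A_1n=0}), (\ref{eq:A_1n=1}), (\ref{eq:A_1n=2}) using only consequences of (a) and (b), and then invoke Propositions \ref{pro:A_1n=1} and \ref{pro:A_1n=2} for linear independence modulo $\Id_{\mathbb{Z}_2}(\mathcal{A}^1)$. The only difference is that you spell out the $n\geq 3$ case (substituting odd blocks $y_{k_1}\cdots y_{k_p}z_l$ into $z_1z_2z_3$), which the paper leaves implicit; that detail is correct and worth having.
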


\section{Graded polynomial identities of $\mathcal{A}^2$}

Modulo $\Id_{\mathbb{Z}_2}(\mathcal{A}^2)$ we have that \begin{equation} \label{eq:m1-m4}
    z_{j_1}z_{j_2} = 0 \text{ and } [y_{i_1}, y_{i_2}][y_{i_3}, y_{i_4}] = 0
\end{equation}
Once again, we consider the space $P_{m,n}$ of multilinear polynomials in the variables $y_{i_1}, y_{i_2}, \dots, y_{i_m}$, $z_{j_1}, z_{j_2}, \dots, z_{j_n}$. Since $ z_{j_1}z_{j_2} = 0$, we have $P_{m,n} = 0 $ if $n\geq 2$. Thus we consider the cases $n=0$ and $n=1$. 

\subsection{Case $n=0$ }

By (\ref{eq:m1-m4}) we have that $ [ y_{i_1}, y_{i_2}][y_{i_3}, y_{i_4}] = 0$, then considering proper polynomials, we have only sums of commutators. Also from (\ref{eq:m1-m4}) and since  \[ [[y_{i_1}, y_{i_2}],[y_{i_3}, y_{i_4}]] = [y_{i_1}, y_{i_2}, y_{i_3}, y_{i_4}] - [y_{i_1}, y_{i_2}, y_{i_4}, y_{i_3}], \] given a commutator we can reorder the variables in the way $[  y_{k}, y_{l} , y_{i_1},  y_{i_2},\dots,  y_{i_{m-2}}]$ where $i_1 < i_2 < \cdots < i_{m-2}$, and by Jacobi identity we can only consider the case $[ y_{i_1},  y_{i_2},\dots,  y_{i_{m}} ]$ where $i_1 > i_2 < i_3 < \cdots < i_{m-2}$. 

Also, note that the subalgebra $\mathcal{A}^2_0$ is isomorphic to the algebra $UT_2(K)$. So, the polynomial $ [ y_{i_1}, y_{i_2}][y_{i_3}, y_{i_4}]$ is a basis of the identities in the variables $y_{i}$.

\subsection{Case $n=1$}

In this case, we consider once again proper polynomials, that is, sums of products of left normed commutators.  

Note that since $ [y_{i_1},y_{i_2}] [y_{i_3},y_{i_4}] = 0$ and we have only one $z_j$ then we can consider sums of commutators with products of two commutators. 

 Now, since $ [y_{i_1},y_{i_2}] = \nu   e_{1,2}$, then  \begin{equation}\label{eq:N2}
     z_{j_1}[y_{i_1},y_{i_2}] = 0. 
 \end{equation}
hence in the case when we have products of two commutators we get that $z_{j_1}$ lies in the second commutator. 

By direct computation, we also have the following identity: 
\begin{equation} \label{eq:N3}
    [[y_{i_1},y_{i_2}]z_{j_1}, y_{i_3}] = 0,
\end{equation} 
Note that from (\ref{eq:N2}) $ [y_{i_1},y_{i_2},z_{j_1}] = [y_{i_1},y_{i_2}]z_{j_1}$  and using the identity (\ref{eq:N3}) we have $  [y_{i_1},y_{i_2},z_{j_1},y_{i_3}] = 0$. Therefore, if $z_{j_1}$ lies in a commutator $\omega$, that commutator has the form $\omega =  [z_{j_1},y_{i_1},y_{i_2}, \dots,y_{i_k}] $ or $\omega =  [y_{i_1},y_{i_2}, \dots,y_{i_k},z_{j_1}] = [y_{i_1},y_{i_2}, \dots,y_{i_k}]z_{j_1} $.

Let us consider the case $\omega =  [z_{j_1},y_{i_1},y_{i_2}, \dots,y_{i_k}] $. Note that by Jacobi identity $ [z_{j_1},y_{i_1},y_{i_2}] + [y_{i_1},y_{i_2},z_{j_1}] + [y_{i_2},z_{j_1},y_{i_1}] = 0 $, then 
$ [z_{j_1},y_{i_2},y_{i_1}] = [z_{j_1},y_{i_1},y_{i_2}] + [y_{i_1},y_{i_2}]z_{j_1}$. So we can write the variables $y_i$ in any order in $\omega$. 

If  $\omega =  [y_{i_1},y_{i_2}, \dots,y_{i_k},z_{j_1}] = [y_{i_1},y_{i_2}, \dots,y_{i_k}]z_{j_1} $, as in the case $P_{m,0}$, we can reorder the variables $y_{i_1},y_{i_2}, \dots,y_{i_k}$ such that  $[y_{i_1},y_{i_2}, \dots,y_{i_k}]z_{j_1}$  is a linear combination of polynomials $[y_{i_l},y_{i_1},y_{i_2}, \dots ,\widehat{y_{i_l}}, \dots,y_{i_s}]z_{j_1}$  where  $i_{l} > i_1 < i_2 < \cdots < i_k$.

At this point, we have that if $\mathcal{B}$ is the space of proper polynomials in the variables $z_{j_1}$, $y_{i_1}$, $y_{i_2}$, \dots, $y_{i_k}$, modulo $\Id_{\mathbb{Z}_2}(\mathcal{A}^2)$, $\mathcal{B}$ is spanned by \begin{itemize}
    \item $ [z_{j_1},y_{i_1},y_{i_2}, \dots,y_{i_k}]$, $i_1 < i_2 < \cdots < i_k$,
    \item $ [y_{i_l},y_{i_1},y_{i_2}, \dots ,\widehat{y_{i_l}}, \dots,y_{i_k}]z_{j_1}$, $i_{l} > i_1 < i_2 < \cdots < i_k$,
    \item $ [y_{i_l},y_{i_1},y_{i_2}, \dots ,\widehat{y_{i_l}}, \dots,y_{i_s}][z_{j_1}, y_{h_1},y_{h_2},\dots,y_{h_t} ]$,  $i_{l} > i_1 < i_2 < \cdots < i_k$, $h_1 < h_2 < \cdots < h_s$.
\end{itemize}

\begin{proposition} \label{pr:A_2}
    The polynomial \begin{equation}
        [y_{i_1},y_{i_2}][z_{j_1},y_{i_3}] + [y_{i_1},y_{i_2},y_{i_3}]z_{j_1}
    \end{equation} 
    is a polynomial identity of the algebra $\mathcal{A}^2$.
\end{proposition}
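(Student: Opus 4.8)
The plan is to verify the identity by a direct homogeneous evaluation. Since the polynomial is multilinear and all of its variables are homogeneous (the variables $y_{i_1},y_{i_2},y_{i_3}$ have degree $0$ and $z_{j_1}$ has degree $1$), it suffices to substitute arbitrary elements $Y_1,Y_2,Y_3 \in \mathcal{A}^2_0$ and $Z \in \mathcal{A}^2_1$ and check that the outcome is $\mathbf{0}$.

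Before touching the matrices, I would isolate a purely formal identity that holds in every associative algebra. Writing $W = [Y_1,Y_2]$ and expanding the two summands gives
\[
W[Z,Y_3] + [W,Y_3]Z = (WZY_3 - WY_3Z) + (WY_3Z - Y_3WZ) = WZY_3 - Y_3WZ ,
\]
the two copies of $WY_3Z$ cancelling. Hence the whole statement reduces to proving the single equality $WZY_3 = Y_3WZ$ for these substitutions, which is the conceptual core of the argument.

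To establish it I would invoke the structure already recorded in the text: because $Y_1,Y_2 \in \mathcal{A}^2_0$, we have $W = [Y_1,Y_2] = \nu\, e_{1,2}$ for a suitable scalar $\nu$ (the relation preceding (\ref{eq:N2})). Writing $Z = c\,e_{1,3} + b\,e_{2,3}$, a short computation with matrix units shows $WZ = \nu b\, e_{1,3}$, since $e_{1,2}$ annihilates the $(1,3)$-component of $Z$ and carries its $(2,3)$-component to $e_{1,3}$. Finally, letting $d_3$ denote the common $(1,1)=(3,3)$-entry of $Y_3$, one checks directly that $e_{1,3}Y_3 = d_3\,e_{1,3}$ and $Y_3 e_{1,2} = d_3\,e_{1,2}$, so that
\[
WZY_3 = \nu b d_3\, e_{1,3} = Y_3 W Z ,
\]
and the two contributions to the polynomial cancel exactly.

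I do not expect a genuine obstacle here: every computation is an elementary product of matrix units. The only place demanding care is the bookkeeping of which entries of $Z$ and of $Y_3$ survive after left or right multiplication by $e_{1,2}$ and $e_{1,3}$; once the formal reduction to $WZY_3 = Y_3WZ$ is made, the equality is immediate from $W \in K e_{1,2}$ together with the observation that both $e_{1,2}$ and $e_{1,3}$ interact with $Y_3$ only through the single scalar $d_3$.
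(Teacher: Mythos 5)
Your proof is correct and follows essentially the same route as the paper: both arguments reduce the claim to the single commutation $[y_{i_1},y_{i_2}]z_{j_1}y_{i_3} = y_{i_3}[y_{i_1},y_{i_2}]z_{j_1}$, which is exactly the identity $[[y_{i_1},y_{i_2}]z_{j_1},y_{i_3}]=0$ (the paper's (\ref{eq:N3})). The only difference is that you re-derive this fact by an explicit matrix-unit computation, whereas the paper simply cites (\ref{eq:N3}), which it had already recorded as holding ``by direct computation.''
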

\begin{proof}
    Note that  \begin{align*}
        [y_{i_1},y_{i_2}][z_{j_1},y_{i_3}] &  = [y_{i_1},y_{i_2}]z_{j_1}y_{i_3} -   [y_{i_1},y_{i_2}]y_{i_3}z_{j_1} \stackrel{(\ref{eq:N3})}{=} y_{i_3}[y_{i_1},y_{i_2}]z_{j_1} -   [y_{i_1},y_{i_2}]y_{i_3}z_{j_1} \\
         & = [y_{i_3},[y_{i_1},y_{i_2}]]z_{j_1}  = -[y_{i_1},y_{i_2},y_{i_3}]z_{j_1}.\qedhere
    \end{align*}
\end{proof}

Hence from Proposition \ref{pr:A_2} we consider linear combinations of 
\begin{itemize}
    \item $ [z_{j_1},y_{i_1},y_{i_2}, \dots,y_{i_k}]$, $i_1 < i_2 < \cdots < i_k$,
    \item $ [y_{i_l},y_{i_1},y_{i_2}, \dots ,\widehat{y_{i_l}}, \dots,y_{i_k}]z_{j_1}$, $i_{l} > i_1 < i_2 < \cdots < i_k$.
\end{itemize}
Note that, if 
$ y_{i_k} = \begin{pmatrix} 
    d_{i_k} & a_{i_k} & 0 \\
    0 & g_{i_k} & 0 \\
    0 & 0 & d_{i_k} 
    \end{pmatrix} $ 
    and  
    $ z_{j_1} = \begin{pmatrix} 0 & 0 & c_{j_1} \\
    0 & 0 & b_{j_1} \\
    0 & 0 & 0 
    \end{pmatrix} $, then 
     \[
        [z_{j_1},y_{i_1},y_{i_2}, \dots,y_{i_m}]  =   (-1)^{m+1}b_{j_1} \prod_{k=1}^{m-1} (d_{i_k} - g_{i_k})a_{i_m} \cdot e_{1,3}   + b_{j_1} \prod_{k=1}^{m} (d_{i_k} - g_{i_k}) \cdot e_{2,3}
\]
 and 
\[
      [y_{i_l},y_{i_1},y_{i_2}, \dots ,\widehat{y_{i_l}}, \dots,y_{i_k}]z_{j_1}    
     =  \{ 
     \left[ a_{i_1}(d_{i_l} - g_{i_l}) + a_{i_l}(g_{i_l} - d_{i_l}) \right] \prod_{k \neq l}^{m} (g_{i_k} - d_{i_k})
     \} b_{j_1} \cdot e_{2,3}
 \]
\begin{theorem}\label{th:A_2}
The following identities  form a basis for the $\mathbb{Z}_2$-graded identities of the algebra $\mathcal{A}^2$: 
\begin{itemize}
    \item[$(i)$] $ z_{i_1}z_{i_2}$,
    \item[$(ii)$] $ [y_{i_1},y_{i_2}][y_{i_3},y_{i_4}] $,
    \item[$(iii)$]  $ z_{i_1}[y_{i_1},y_{i_2}] $,
    \item[$(iv)$]  $ [[y_{i_1},y_{i_2}]z_{j_1},y_{i_3}]$.
\end{itemize}
\end{theorem}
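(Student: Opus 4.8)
The plan is to prove that the $T_2$-ideal $I$ generated by $(i)$–$(iv)$ equals $\Id_{\mathbb{Z}_2}(\mathcal{A}^2)$. One inclusion is immediate: $(i)$ and $(ii)$ are the identities recorded in \eqref{eq:m1-m4}, while $(iii)$ and $(iv)$ are \eqref{eq:N2} and \eqref{eq:N3}, so $I\subseteq\Id_{\mathbb{Z}_2}(\mathcal{A}^2)$. For the reverse inclusion I would use that $K$ has characteristic zero and that $\mathcal{A}^2$ is unitary, so (exactly as in the $*$-case of the preliminaries) it suffices to show that every multilinear $Y$-proper graded polynomial lying in $\Id_{\mathbb{Z}_2}(\mathcal{A}^2)$ already lies in $I$. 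By $(i)$ one has $z_{j_1}z_{j_2}\in I$, which reduces the analysis at once to $n\le 1$, i.e.\ to the spaces $P_{m,0}$ and $P_{m,1}$.

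For $n=0$ I would invoke the isomorphism $\mathcal{A}^2_0\cong UT_2(K)$ given by reading off the top-left $2\times 2$ block. Under it the graded identities in the even variables are precisely the ordinary identities of $UT_2(K)$, which are generated by $[y_1,y_2][y_3,y_4]$, that is, by $(ii)$. Concretely, modulo $I$ the proper part of $P_{m,0}$ is spanned by the left-normed commutators $[y_{i_l},y_{i_1},\dots,\widehat{y_{i_l}},\dots,y_{i_m}]$ with $i_l>i_1<\dots<i_m$, and these $m-1$ elements are linearly independent modulo $\Id_{\mathbb{Z}_2}(\mathcal{A}^2)$ by evaluating into $e_{1,2}$.

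The case $n=1$ is the heart of the matter. Here I would carry out the reductions already prepared before the statement, stressing that each uses only consequences of $(i)$–$(iv)$: by $(iii)$ the variable $z_{j_1}$ cannot precede a commutator, so in a product of two commutators it must sit in the second factor; by \eqref{eq:N3} together with the Jacobi identity the even variables inside a commutator containing $z_{j_1}$ may be freely reordered; and by Proposition \ref{pr:A_2} every product of two commutators is rewritten as a single commutator multiplied by $z_{j_1}$. Consequently, modulo $I$, the space $P_{m,1}$ is spanned by the two families
\[
[z_{j_1},y_{i_1},\dots,y_{i_m}],\quad i_1<\dots<i_m,\qquad
[y_{i_l},y_{i_1},\dots,\widehat{y_{i_l}},\dots,y_{i_m}]\,z_{j_1},\quad i_l>i_1<\dots<i_m ,
\]
a total of $m$ polynomials. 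To finish I would prove their linear independence modulo $\Id_{\mathbb{Z}_2}(\mathcal{A}^2)$ via the explicit values displayed just before the theorem: the second family takes values only in $e_{2,3}$, whereas the first family carries the nonzero $e_{1,3}$-component $(-1)^{m+1}b_{j_1}\prod_{k=1}^{m-1}(d_{i_k}-g_{i_k})\,a_{i_m}$. Thus, given a combination of these that is an identity, a substitution with all $d_{i_k}-g_{i_k}\neq 0$ and $a_{i_m}\neq 0$ forces the coefficient of the first-family term to vanish; the surviving combination of second-family terms lives purely in $e_{2,3}$, and its independence follows from the $UT_2$-analysis of the case $n=0$ applied to that component. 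Matching the spanning count with the dimension of the proper part of $P_{m,1}(\mathcal{A}^2)$ yields $\dim P_{m,1}/I=\dim P_{m,1}/\Id_{\mathbb{Z}_2}(\mathcal{A}^2)$, and together with the $n=0$ count this gives $I=\Id_{\mathbb{Z}_2}(\mathcal{A}^2)$.

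The step I expect to be the main obstacle is precisely this linear independence for $n=1$: one must cleanly disentangle the $e_{1,3}$- and $e_{2,3}$-contributions and check that the evaluations can be chosen so as to detect every basis element simultaneously, while also confirming that the passage through Proposition \ref{pr:A_2} leaves no product-of-commutators relation unaccounted for. By contrast, the verification that $(i)$–$(iv)$ are identities and the $n=0$ reduction to $UT_2(K)$ are routine.
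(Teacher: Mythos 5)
Your overall route is exactly the paper's: reduce to $n\le 1$ via $(i)$, treat $n=0$ through $\mathcal{A}^2_0\cong UT_2(K)$, and for $n=1$ span the proper part by the two families $[z_{j_1},y_{i_1},\dots,y_{i_m}]$ and $[y_{i_l},y_{i_1},\dots,\widehat{y_{i_l}},\dots,y_{i_m}]z_{j_1}$, then prove linear independence by evaluation. The one genuine problem is that your independence step is executed with the two matrix-unit components interchanged, apparently because you took the displayed evaluation formulas at face value and the one for the second family contains a misprint. Since $[y_{i_1},y_{i_2}]$ evaluates to a multiple of $e_{1,2}$ and $e_{1,2}\mathcal{A}^2_1\subseteq Ke_{1,3}$, the polynomials $[y_{i_l},y_{i_1},\dots,\widehat{y_{i_l}},\dots,y_{i_m}]z_{j_1}$ take values in $Ke_{1,3}$, not in $Ke_{2,3}$; it is the first family $[z_{j_1},y_{i_1},\dots,y_{i_m}]$ that alone carries an $e_{2,3}$-component (namely $b_{j_1}\prod_k(d_{i_k}-g_{i_k})$), while its $e_{1,3}$-component mixes with those of the second family. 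Consequently your proposed substitution ``with all $d_{i_k}-g_{i_k}\neq 0$ and $a_{i_m}\neq 0$,'' read off the $e_{1,3}$-entry, does not isolate the coefficient of the first-family term: the second-family terms contribute to that same entry.

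The repair is to swap the roles of the two entries, which is precisely what the paper's proof does. First read off the $e_{2,3}$-entry --- equivalently, evaluate every $y_i=e_{1,1}+e_{3,3}$ and $z_{j_1}=e_{2,3}$, which annihilates every second-family term outright and sends the first-family term to $e_{2,3}$ --- to conclude $\alpha=0$. Then detect each remaining coefficient $\beta_{l_0}$ by the evaluation $y_{h_{l_0}}=e_{1,2}$, $y_{h_i}=e_{2,2}$ for $i\neq l_0$, $z_{j_1}=e_{2,3}$, which sends exactly the $l_0$-th second-family term to $e_{1,3}$ and kills all the others. With that correction your argument coincides with the paper's; the remaining parts of your proposal (the reductions via $(iii)$, $(iv)$, the Jacobi identity and Proposition \ref{pr:A_2}, and the $UT_2(K)$ treatment of the case $n=0$) are sound and match the paper.
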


\begin{proof}
    We want to prove that the polynomials $ [z_{j_1},y_{i_1},y_{i_2}, \dots,y_{i_m}]$ with  $i_1 < i_2 < \cdots < i_m$, and $ [y_{h_l},y_{h_1},y_{h_2}, \dots ,\widehat{y_{h_l}}, \dots,y_{h_m}]z_{j_1} $ with $h_{l} > h_1 < h_2 < \cdots < h_m$ are linearly independent modulo $\Id_{\mathbb{Z}_2}(\mathcal{A}^2)$. Consider $f$ the polynomial given by \[ f = \alpha[z_{j_1},y_{i_1},y_{i_2}, \dots,y_{i_m}] + \sum_{l=2}^{m}\beta_l [y_{h_l},y_{h_1},y_{h_2}, \dots ,\widehat{y_{h_l}}, \dots,y_{h_m}]z_{j_1} \] and suppose that $f\in \Id_{\mathbb{Z}_2}(\mathcal{A}^2)$. Now, considering the evaluation $z_{j_1} = e_{2,3}$ and $y_{i} = (e_{1,1} + e_{3,3})$ for $i = 1$, \dots, $m$ one has $\alpha e_{2,3} = 0$, so $\alpha = 0$. Then, \[ f = \sum_{l=2}^{m} \beta_l [y_{h_l},y_{h_1},y_{h_2}, \dots ,\widehat{y_{h_l}}, \dots,y_{h_m}]z_{j_1}=0.\] 
    Suppose that there exists $l_0$ such that $\beta_{l_0} \neq 0$, and consider the evaluation $y_{h_{l_0}} = e_{1,2}$, $y_{h_i} = e_{2,2}$ for $i \neq l_0$ and $z_{j_1} = e_{2,3}$. Then, $\beta_{l_0}e_{1,3} = 0$, therefore $\beta_{l_0} = 0$, a contradiction. Thus, we have the desired result. 
\end{proof}

\section{  Graded polynomial identities of $\mathcal{A}^3$}

By direct computation, the following polynomials are identities of $\mathcal{A}^3$:
\begin{itemize}
    \item[(a)] $ z_{j_1}z_{j_2} $,
    \item[(b)] $ [y_{i_1},y_{i_2}][y_{i_3},y_{i_4}]$,
    \item[(c)] $ [y_{i_1},y_{i_2}]z_{j_1}$,
    \item[(d)] $ [z_{j_1}[y_{i_1},y_{i_2}],y_{i_3}]$. 
\end{itemize}

\begin{proposition} \label{pr:A_3}
    The polynomial \begin{equation}
        [z_{j_1},y_{i_3}][y_{i_1},y_{i_2}] + z_{j_1}[y_{i_1},y_{i_2},y_{i_3}]
    \end{equation} 
    is a polynomial identity of the algebra $\mathcal{A}^3$.
\end{proposition}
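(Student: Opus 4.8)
The plan is to establish the identity by a short direct computation modulo $\Id_{\mathbb{Z}_2}(\mathcal{A}^3)$, entirely parallel to the proof of Proposition \ref{pr:A_2}. Indeed, the statement is the natural mirror image of that one, with the odd variable now appearing on the \emph{left} rather than on the right; this reflects the fact that in $\mathcal{A}^3$ the homogeneous component $A_1$ sits in the top row. The only structural input needed is identity (d), namely $[z_{j_1}[y_{i_1},y_{i_2}],y_{i_3}] = 0$, which says precisely that the product $z_{j_1}[y_{i_1},y_{i_2}]$ commutes with every even variable $y_{i_3}$.

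First I would expand the leading commutator as
\[
[z_{j_1},y_{i_3}][y_{i_1},y_{i_2}] = z_{j_1}y_{i_3}[y_{i_1},y_{i_2}] - y_{i_3}z_{j_1}[y_{i_1},y_{i_2}].
\]
Next, in the second summand I would invoke identity (d) to pull $y_{i_3}$ to the right of the block $z_{j_1}[y_{i_1},y_{i_2}]$, so that $y_{i_3}z_{j_1}[y_{i_1},y_{i_2}] = z_{j_1}[y_{i_1},y_{i_2}]y_{i_3}$ modulo $\Id_{\mathbb{Z}_2}(\mathcal{A}^3)$. After this swap the factor $z_{j_1}$ is leftmost in both summands, so I would factor it out on the left and recognize the remaining difference as an inner commutator:
\[
[z_{j_1},y_{i_3}][y_{i_1},y_{i_2}] = z_{j_1}\bigl(y_{i_3}[y_{i_1},y_{i_2}] - [y_{i_1},y_{i_2}]y_{i_3}\bigr) = z_{j_1}[y_{i_3},[y_{i_1},y_{i_2}]].
\]
Finally, using $[y_{i_3},[y_{i_1},y_{i_2}]] = -[y_{i_1},y_{i_2},y_{i_3}]$ yields $[z_{j_1},y_{i_3}][y_{i_1},y_{i_2}] = -z_{j_1}[y_{i_1},y_{i_2},y_{i_3}]$, which is exactly the asserted identity.

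I do not expect a serious obstacle: the computation is a brief rearrangement, and the single nontrivial ingredient, the commuting relation supplied by identity (d), is already available. The one point to handle with care is the \emph{direction} in which $y_{i_3}$ is moved, as it must be taken past the entire block $z_{j_1}[y_{i_1},y_{i_2}]$ and not past $z_{j_1}$ alone, since it is the full product, rather than $z_{j_1}$ by itself, that is central with respect to the even variables of $\mathcal{A}^3$.
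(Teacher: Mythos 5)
Your proposal is correct and follows essentially the same argument as the paper: expand $[z_{j_1},y_{i_3}][y_{i_1},y_{i_2}]$, apply identity (d) to move $y_{i_3}$ past the block $z_{j_1}[y_{i_1},y_{i_2}]$, factor out $z_{j_1}$ on the left, and identify the result as $z_{j_1}[y_{i_3},[y_{i_1},y_{i_2}]] = -z_{j_1}[y_{i_1},y_{i_2},y_{i_3}]$. The paper's proof is exactly this computation.
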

\begin{proof}
    Note that  \begin{align*}
       [z_{j_1},y_{i_3}] [y_{i_1},y_{i_2}] &  = z_{j_1}y_{i_3}[y_{i_1},y_{i_2}] -   y_{i_3}z_{j_1} [y_{i_1},y_{i_2}]  
\stackrel{(d)}{=}z_{j_1}y_{i_3}[y_{i_1},y_{i_2}] -   z_{j_1} [y_{i_1},y_{i_2}]y_{i_3} \\
         & = z_{j_1}[y_{i_3},[y_{i_1},y_{i_2}]]  = -z_{j_1}[y_{i_1},y_{i_2},y_{i_3}].\qedhere
    \end{align*}
\end{proof}
Consider the vector space $P_{m,n}$ of multilinear polynomials in the variables $y_{i_1}$, $y_{i_2}$, \dots, $y_{i_m}$, $z_{j_1}, z_{j_2}, \dots, z_{j_n}$. Since $ z_{j_1}z_{j_2} = 0$, $P_{m,n} = 0 $ if $n\geq 2$. So, we consider the cases $n=0$ and $n=1$.

\subsection{Case $n=0$}

Since $[y_{i_1},y_{i_2}][y_{i_3},y_{i_4}] =0$,
then considering proper polynomials, we have only sums of commutators, and by Jacobi identity we can reorder the variables of that commutator and consider only the case $[ y_{i_1},  y_{i_2},\dots,  y_{i_{m}} ]$ where $i_1 > i_2 < i_3 < \cdots < i_{m-2}$. 

\begin{proposition}
    The polynomials  $[y_{h_l},y_{h_1},y_{h_2}, \dots ,\widehat{y_{h_l}}, \dots,y_{h_m}]$ with $h_{l} > h_1 < h_2 < \cdots < h_m$ are linearly independent modulo $\Id_{\mathbb{Z}_2}(\mathcal{A}^3)$.
\end{proposition}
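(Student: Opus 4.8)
The plan is to exhibit an explicit evaluation that separates the monomials, exactly in the spirit of the linear-independence arguments used for $\mathcal{A}^1$ and $\mathcal{A}^2$. Consider a linear combination
\[
f = \sum_{l} \beta_l\,[y_{h_l},y_{h_1},y_{h_2},\dots,\widehat{y_{h_l}},\dots,y_{h_m}],
\]
indexed by the distinguished position $l$ (with $h_l > h_1 < h_2 < \cdots$), and suppose $f \in \Id_{\mathbb{Z}_2}(\mathcal{A}^3)$. The key computation I would carry out first is the value of a single commutator of the form $[y_{h_l},y_{h_1},\dots,\widehat{y_{h_l}},\dots,y_{h_m}]$ under a generic diagonal-plus-lower substitution $y_{i} = \begin{pmatrix} d_i & 0 & 0 \\ 0 & g_i & b_i \\ 0 & 0 & d_i \end{pmatrix}$, which is the general form of an element of $\mathcal{A}^3_0$. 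Since $\mathcal{A}^3_0$ is (isomorphic to) $UT_2(K)$, each such commutator evaluates to a scalar multiple of $e_{2,3}$, with the scalar being a product of differences $(g_{h_k}-d_{h_k})$ together with a factor coming from the innermost distinguished variable $y_{h_l}$.

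First I would record this closed form; the main point is that the coefficient attached to each monomial, as a function of the entries $b_i, d_i, g_i$, is a distinct monomial in these entries (the distinguished index $l$ controls where the $b$-factor appears), so distinct choices of $l$ give linearly independent scalar functions. Next, to isolate a fixed $\beta_{l_0}$, I would use the evaluation $y_{h_{l_0}} = e_{2,3}$ (or the relevant off-diagonal unit giving a nonzero $b$) and $y_{h_i} = e_{2,2}$ for $i \neq l_0$, mirroring the substitutions used in the proof of Theorem \ref{th:A_2}; this kills every term except the one indexed by $l_0$ and forces $\beta_{l_0} e_{2,3} = 0$, hence $\beta_{l_0} = 0$. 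Running over all $l_0$ yields $f = 0$ in the free algebra, proving linear independence.

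The main obstacle I anticipate is purely bookkeeping: verifying that the generic evaluation really does produce pairwise-distinct scalar monomials for different positions $l$, and confirming that the single chosen evaluation annihilates all but one term without accidental coincidences among the $(g_{h_k}-d_{h_k})$ factors. This is handled by choosing the substitution so that exactly one factor is nonzero per intended surviving term, exactly as in the earlier propositions. No genuinely new idea is needed beyond the explicit matrix computation, so the argument is a direct adaptation of the method already established for $\mathcal{A}^1$ and $\mathcal{A}^2$.
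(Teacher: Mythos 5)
Your proposal is correct and follows essentially the same route as the paper: a single evaluation sending the distinguished variable $y_{h_{l_0}}$ to $e_{2,3}$ and all other variables to a diagonal idempotent of $\mathcal{A}^3_0$, which annihilates every commutator except the one indexed by $l_0$ (whose first two entries would otherwise both be diagonal and hence commute) and yields $\pm\beta_{l_0}e_{2,3}=0$. The only cosmetic difference is your choice of filler $e_{2,2}$ where the paper uses $e_{1,1}+e_{3,3}$; both work, and the preliminary generic computation you sketch is not actually needed.
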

\begin{proof}
    Consider the polynomial given by \[ f = \sum_{l=2}^{m}\beta_l [y_{h_l},y_{h_1},y_{h_2}, \dots ,\widehat{y_{h_l}}, \dots,y_{h_m}]\] and suppose that $f\in \Id_{\mathbb{Z}_2}(\mathcal{A}^3)$. 
    Suppose there exists $l_0$ such that $\beta_{l_0} \neq 0$, and consider the evaluation $y_{h_{l_0}} = e_{2,3}$ and $y_{h_i} = (e_{1,1} + e_{3,3})$ for $i \neq l_0$. Then, $\beta_{l_0}e_{2,3} = 0$, therefore $\beta_{l_0} = 0$, a contradiction. Thus, we have the desired result. 
\end{proof}

\subsection{Case $n=1$}

We work with proper polynomials. 
Since $ [y_{i_1},y_{i_2}] [y_{i_3},y_{i_4}] = 0$ and we have only one $z_j$, then we only consider sums of commutators with products of two commutators. 

As $ [y_{i_1},y_{i_2}]z_{j_1}=0$,  when we have products of two commutators then $z_{j_1}$ is in the first commutator.

From the identity (c) $ [y_{i_1},y_{i_2},z_{j_1}] = - z_{j_1}[y_{i_1},y_{i_2}]$  and using the identity (d) we have $  [y_{i_1},y_{i_2},z_{j_1},y_{i_3}] = 0$. Therefore, if $z_{j_1}$ lies in a commutator $\omega$, that commutator has the form $\omega =  [z_{j_1},y_{i_1},y_{i_2}, \dots,y_{i_k}] $ or $\omega =  [y_{i_1},y_{i_2}, \dots,y_{i_k},z_{j_1}] = -z_{j_1} [y_{i_1},y_{i_2}, \dots,y_{i_k}]$.

Let us consider the case $\omega =  [z_{j_1},y_{i_1},y_{i_2}, \dots,y_{i_k}] $. Note that by Jacobi identity $ [z_{j_1},y_{i_1},y_{i_2}] + [y_{i_1},y_{i_2},z_{j_1}] + [y_{i_2},z_{j_1},y_{i_1}] = 0 $, so  $ [z_{j_1},y_{i_1},y_{i_2}] - z_{j_1}[y_{i_1},y_{i_2}] - [z_{j_1},y_{i_2},y_{i_1}] = 0 $ and
$ [z_{j_1},y_{i_2},y_{i_1}] = [z_{j_1},y_{i_1},y_{i_2}] - z_{j_1}[y_{i_1},y_{i_2}]$. Therefore, we can write the variables $y_i$ in any order in $\omega$.

If  $\omega =  [y_{i_1},y_{i_2}, \dots,y_{i_k},z_{j_1}] = -z_{j_1} [y_{i_1},y_{i_2}, \dots,y_{i_k}]$, as in the case $P_{m,0}$, we can reorder the variables $y_{i_1},y_{i_2}, \dots,y_{i_k}$ such that  $z_{j_1}[y_{i_1},y_{i_2}, \dots,y_{i_k}]$  is a linear combination of polynomials $z_{j_1}[y_{i_l},y_{i_1},y_{i_2}, \dots ,\widehat{y_{i_l}}, \dots,y_{i_s}]$  where  $i_{l} > i_1 < i_2 < \cdots < i_k$.

A nonzero product of commutators has the form $ [z_{j_1},y_{i_1},\dots,y_{i_s}] [y_{h_1},y_{h_2},\dots,y_{h_t}]$ and by Proposition \ref{pr:A_3} we reduce it to the form   $ z_{j_1}[y_{i_1},y_{i_2},\dots,y_{i_{s+t}}]$. 
Then, if $\mathcal{B}$ is the space of proper polynomials in the variables $z_{j_1}$, $y_{i_1}$, $y_{i_2}$, \dots, $y_{i_k}$, modulo $\Id_{\mathbb{Z}_2}(\mathcal{A}^3)$, $\mathcal{B}$ is spanned by \begin{itemize}
    \item $ [z_{j_1},y_{i_1},y_{i_2}, \dots,y_{i_k}]$, $i_1 < i_2 < \cdots < i_k$,
    \item $ z_{j_1}[y_{i_l},y_{i_1},y_{i_2}, \dots ,\widehat{y_{i_l}}, \dots,y_{i_k}]$, $i_{l} > i_1 < i_2 < \cdots < i_k$.
\end{itemize}

\begin{theorem}\label{th:A_3}
The following identities  form a basis for the $\mathbb{Z}_2$-graded identities for the algebra $\mathcal{A}^3$: 
\begin{itemize}
    \item[$(i)$] $ z_{i_1}z_{i_2}$,
    \item[$(ii)$] $ [y_{i_1},y_{i_2}][y_{i_3},y_{i_4}] $,
    \item[$(iii)$]  $ [y_{i_1},y_{i_2}]z_{i_1} $,
    \item[$(iv)$]  $ [z_{j_1}[y_{i_1},y_{i_2}],y_{i_3}]$.
\end{itemize}
  \end{theorem}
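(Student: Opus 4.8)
The plan is to establish that the four listed polynomials generate $\Id_{\mathbb{Z}_2}(\mathcal{A}^3)$ by first verifying they are genuine graded identities, and then showing that modulo these identities every multilinear polynomial can be reduced to a linear combination of the spanning set of proper polynomials already exhibited, after which a linear independence argument over evaluations completes the proof. Since the preceding discussion already lists identities $(i)$--$(iv)$ as verified by direct computation and reduces the space $\mathcal{B}$ of proper polynomials modulo $\Id_{\mathbb{Z}_2}(\mathcal{A}^3)$ to the spanning set consisting of $[z_{j_1},y_{i_1},\dots,y_{i_k}]$ (with $i_1<\cdots<i_k$) and $z_{j_1}[y_{i_l},y_{i_1},\dots,\widehat{y_{i_l}},\dots,y_{i_k}]$ (with $i_l>i_1<\cdots<i_k$), the essential remaining content is that these spanning elements are linearly independent modulo the identities.

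First I would handle the case $n=0$, where the statement follows immediately from the already-proved proposition on linear independence of $[y_{h_l},y_{h_1},\dots,\widehat{y_{h_l}},\dots,y_{h_m}]$ together with the observation that $\mathcal{A}^3_0 \cong UT_2(K)$, whose ordinary identities are generated by $(ii)$. For the case $n=1$, I would prove that a polynomial
\[
f = \alpha\,[z_{j_1},y_{i_1},\dots,y_{i_m}] + \sum_{l=2}^{m}\beta_l\, z_{j_1}[y_{i_l},y_{i_1},\dots,\widehat{y_{i_l}},\dots,y_{i_m}]
\]
lying in $\Id_{\mathbb{Z}_2}(\mathcal{A}^3)$ must be identically zero. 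The mechanism mirrors the proof of Theorem~\ref{th:A_2}: by choosing the evaluation $z_{j_1}=e_{1,2}$ and all $y_i = (e_{1,1}+e_{3,3})$, the second summand vanishes (each inner commutator becomes a commutator of scalar matrices, hence zero) while the first term isolates $\alpha$ against a nonzero matrix unit, forcing $\alpha=0$. Then with $\alpha=0$ one selects $y_{h_{l_0}}=e_{1,3}$ (or the appropriate skew/diagonal combination placing a nonzero entry in the $(1,3)$ slot), the remaining $y_{h_i}=e_{2,2}$, and $z_{j_1}=e_{1,2}$, so that only the $l_0$-th term survives and $\beta_{l_0}=0$.

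The main obstacle is ensuring the reduction to the spanning set of $\mathcal{B}$ is exhaustive and correctly accounts for all the ways a single $z$-variable can be distributed among commutators. Concretely, one must argue that no other normal form slips through: every monomial is first rewritten as a product of proper commutators using $(i)$ and $(ii)$ (so at most two commutators survive, and with only one $z$ the nonzero products are governed by $(iii)$ and $(iv)$), and then Proposition~\ref{pr:A_3} collapses genuine products of two commutators into the single-commutator forms $z_{j_1}[y_{i_1},\dots,y_{i_{s+t}}]$. I would therefore state explicitly that the highest-weight-vector bookkeeping is complete before invoking the evaluations, since the linear independence argument only certifies that the already-identified spanning set is a basis, not that the spanning set itself was derived without loss. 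Once both the spanning and the independence are in place, the dimension count matches and the four identities constitute a basis, completing the proof.
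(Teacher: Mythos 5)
Your overall strategy is the same as the paper's: take for granted the preceding reduction of the proper multilinear polynomials with one $z$ to the two families $[z_{j_1},y_{i_1},\dots,y_{i_m}]$ ($i_1<\cdots<i_m$) and $z_{j_1}[y_{i_l},y_{i_1},\dots,\widehat{y_{i_l}},\dots,y_{i_m}]$, and then kill the coefficients by two graded evaluations. Your first evaluation ($z_{j_1}=e_{1,2}$, all $y_i=e_{1,1}+e_{3,3}$) is exactly the paper's and works, since $[e_{1,2},e_{1,1}+e_{3,3}]=-e_{1,2}$ while the second family vanishes.

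Your second evaluation, however, fails as written. In the grading (\ref{eq:gra3}) defining $\mathcal{A}^3$ the degree-zero component is $\Span\{e_{1,1}+e_{3,3},\,e_{2,2},\,e_{2,3}\}$ and $e_{1,3}$ lies in the degree-one component, so $y_{h_{l_0}}=e_{1,3}$ is not an admissible substitution for a $y$-variable. Worse, $e_{1,3}$ is central in $A$ (it annihilates $e_{2,2}$, $e_{1,2}$ and $e_{2,3}$ on both sides and commutes with $e_{1,1}+e_{3,3}$), so even as an ungraded substitution it makes the leading commutator $[y_{h_{l_0}},\dots]$ vanish and the evaluation yields $0=0$, detecting nothing. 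Your parenthetical fallback cannot be realized either: no element of $(\mathcal{A}^3)_0$ has a nonzero $(1,3)$ entry. The correct choice --- the one the paper uses --- is $y_{h_{l_0}}=e_{2,3}$, the unique off-diagonal matrix unit of degree zero, with the remaining $y$'s diagonal idempotents and $z_{j_1}=e_{1,2}$; then the $l_0$-th term evaluates to $\pm e_{1,3}$ (the $(1,3)$ entry being produced by the product $e_{1,2}e_{2,3}$, not by a single substituted matrix) while all other terms vanish. Your choice $y_{h_i}=e_{2,2}$ for $i\neq l_0$ is compatible with this correction, since $[e_{2,3},e_{2,2}]=-e_{2,3}$ and $[e_{2,2},e_{2,2}]=0$. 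With that single repair your argument goes through and coincides with the paper's proof.
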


\begin{proof}
   Let us show that the polynomials $ [z_{j_1},y_{i_1},y_{i_2}, \dots,y_{i_m}]$ with  $i_1 < i_2 < \cdots < i_m$, and $ z_{j_1}[y_{h_l},y_{h_1},y_{h_2}, \dots ,\widehat{y_{h_l}}, \dots,y_{h_m}] $ with $h_{l} > h_1 < h_2 < \cdots < h_m$ are linearly independent modulo $\Id_{\mathbb{Z}_2}(\mathcal{A}^3)$. Consider $f$ the polynomial given by \[ f = \alpha[z_{j_1},y_{i_1},y_{i_2}, \dots,y_{i_m}] + \sum_{l=2}^{m}\beta_l z_{j_1}[y_{h_l},y_{h_1},y_{h_2}, \dots ,\widehat{y_{h_l}}, \dots,y_{h_m}] \] and suppose that $f\in \Id_{\mathbb{Z}_2}(\mathcal{A}^3).$ Now, considering the evaluation $z_{j_1} = e_{1,2}$ and $y_{i} = (e_{1,1} + e_{3,3})$ for $i = 1,\dots,m$ one has $\alpha e_{1,2} = 0$, so $\alpha = 0$. Then, \[ f = \sum_{l=2}^{m} \beta_l z_{j_1}[y_{h_l},y_{h_1},y_{h_2}, \dots ,\widehat{y_{h_l}}, \dots,y_{h_m}]=0.\] 
       Suppose there exists $l_0$ such that $\beta_{l_0} \neq 0$, and consider the evaluation $y_{h_{l_0}} = e_{2,3}$, $y_{h_i} = (e_{1,1}+e_{3,3})$ for $i \neq l_0$ and $z_{j_1} = e_{1,2}$. Then, $\beta_{l_0}e_{1,3} = 0$, therefore $\beta_{l_0} = 0$, a contradiction. Thus, we have the desired result. 
\end{proof}

\section{$S_m \times S_n$-characters of $\mathcal{A}^1$}
In this section, we consider the cocharacters of the $\mathbb{Z}_2$-graded algebra $\mathcal{A}^1$ (i.e., the algebra $A$ with the grading given by (\ref{eq:gra1}). We recall that a basis for the graded identities of $\mathcal{A}^1$ is given by the polynomials $[y_1,y_2]$ and $z_1z_2z_3$. 

Let $S_m$ act on the variables $y_1$, \dots, $y_m$ and let $S_n$ act on $z_1$, \dots, $z_n$, and consider the  cocharacter 
\[ \chi_{m,n}(\mathcal{A}^1) = \sum_{(\lambda, \mu) \vdash (m,n) } m_{\lambda,\mu}\chi_{\lambda} \otimes \chi_{\mu}.
 \]

Let $\lambda \vdash m$, $\mu \vdash n$,  and let $W_{\lambda,\mu}$  be a left irreducible $S_m \times S_n$-module.  If $T_{\lambda}$ is a tableau of shape  $\lambda$ and $T_{\mu}$  a tableau of shape $\mu$, then $W_{\lambda,\mu} \cong  F(S_m \times S_n)e_{T_\lambda}e_{T_\mu}$ with $S_m$ and $S_n$  acting on disjoint sets of integers.

From the identity $[y_1, y_2] = 0$ we have  $m_{(m),\emptyset} = 1$ and $m_{\lambda,\emptyset} = 0$ for $\lambda \neq (m)$. Also, from  $[y_1, y_2] = 0$, we obtain $m_{\lambda,\mu} = 0$ for $h(\lambda) >2$.  

Consider the case $n=1$. Since $P_{m,1}(\mathcal{A}^1) = \Span\{ y_{i_1}y_{i_2} \cdots y_{i_s} z  y_{j_1}y_{j_2} \cdots y_{i_t}  \}$, we consider the case $\lambda \vdash m$ with $\lambda = (p+q,p)$.

For every $i=0$, \dots, $q$ define the following two tableaux: $T_\lambda^{(i)}$ is the tableau
$$\begin{tabular}{|c|c|c|c|c|c|c|c|c|c|c|}
\hline$i+1$ & $i+2$ & $\cdots$ & $i+p$ & 1 & 2 & $\cdots$ & $i$ & $i+2 p+2$ & $\cdots$ & $2p+q+1$ \\
\hline$i+p+2$ & $i+p+3$ & $\cdots$ & $i+2 p+1$ & \multicolumn{7}{|c}{}  \\
\cline { 1 - 4 } 
\end{tabular},
$$
$$
T_\mu^{(i)}= \begin{tabular}{|l|}
\hline $i+p+1$  \\
\hline 
\end{tabular}.
$$
The associated polynomial is
 \begin{equation}\label{eq:yzy}
     a_{p,q}^{(i)}(y_1,y_2,z) = y_1^i \underbrace{\bar{y}_1 \cdots \tilde{y}_1}_p z \underbrace{\bar{y}_2 \cdots \tilde{y}_2}_p y_1^{q-i}, 
 \end{equation}
 where $\bar{}$ and $\tilde{}$ mean alternation on the corresponding elements. That is, 
 \[ f(\overline{x}_1,\dots,\overline{x}_i, x_{i+1}, \dots, x_n) = \sum_{\sigma\in S_i} (-1)^{\sigma} f(x_{\sigma(1)},\dots, x_{\sigma(i)}, x_{i+1} , \dots , x_n).\]
\begin{proposition} \label{pro:linear_inde}
    Modulo $\Id_{\mathbb{Z}_2}(\mathcal{A}^1)$, the polynomials $a_{p,q}^{(i)}(y_1,y_2,z)$ as in (\ref{eq:yzy}) are linearly independent, $i = 0$, \dots, $q$.  
\end{proposition}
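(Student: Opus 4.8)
The plan is to prove linear independence of the polynomials $a_{p,q}^{(i)}$ by exhibiting, for each $i$, a graded substitution that separates them. Since these polynomials live in $P_{m,1}(\mathcal{A}^1)$ with $\lambda=(p+q,p)$ and $\mu=(1)$, and since Proposition \ref{pro:A_1n=1} already tells us the monomials $y_{i_1}\cdots y_{i_s}z_1 y_{j_1}\cdots y_{j_t}$ are linearly independent modulo $\Id_{\mathbb{Z}_2}(\mathcal{A}^1)$, the natural strategy is to expand each $a_{p,q}^{(i)}$ in this monomial basis and track the coefficient of a cleverly chosen monomial. The alternations $\bar{y}_1\cdots\tilde{y}_1$ (on $p$ copies) and $\bar{y}_2\cdots\tilde{y}_2$ (on $p$ copies) mean that upon restoring distinct variable names and using the identity $[y_1,y_2]=0$ to reorder, each $a_{p,q}^{(i)}$ becomes a signed sum of the canonical monomials with $s$ variables left of $z$ and $t$ to the right, where $s=i+p$ and $t=p+(q-i)$. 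The key observation is that the position-count $s=i+p$ distinguishes the index $i$: different values of $i$ produce monomials with different numbers of $y$-variables to the left of $z$, hence different canonical basis monomials.

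First I would evaluate each $a_{p,q}^{(i)}$ on the concrete matrices, substituting $z = e_{1,2}$ and choosing the two alternated groups and the $y_1^i$, $y_1^{q-i}$ tails to be diagonal elements of $A$, namely linear combinations of $(e_{1,1}+e_{3,3})$ and $e_{2,2}$. The point of Proposition \ref{pro:A_1n=1}'s computation is that a diagonal $y$-variable acts on $e_{1,2}$ as $(e_{1,1}+e_{3,3})$ from the left (contributing its ``$d$''-entry) and as $e_{2,2}$ from the right (contributing its ``$g$''-entry). Thus I would set the $p$ variables in the first alternated block together with the prefix $y_1^i$ to live on the left of $z$, and the second alternated block together with $y_1^{q-i}$ on the right; after the reordering permitted by $[y_1,y_2]=0$, the evaluation collapses to a scalar multiple of $e_{1,2}$. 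The alternation forces the left block and right block to contribute Vandermonde-type determinants in the chosen parameters, so each nonzero $a_{p,q}^{(i)}$ yields a nonzero, and for distinct $i$ structurally distinct, scalar.

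The cleanest way to conclude is to suppose a linear combination $\sum_{i=0}^{q}\gamma_i a_{p,q}^{(i)}$ lies in $\Id_{\mathbb{Z}_2}(\mathcal{A}^1)$ and then expand everything in the canonical monomial basis. Because the number of $y$'s to the left of $z$ in $a_{p,q}^{(i)}$ equals $i+p$, which is strictly increasing in $i$, the monomial supports of the various $a_{p,q}^{(i)}$ occupy disjoint ``left-length'' strata. Combined with Proposition \ref{pro:A_1n=1}, which guarantees that distinct canonical monomials are genuinely independent modulo the $T_2$-ideal, the disjointness of supports forces each $\gamma_i$ to vanish separately. Concretely I would pick, for the given $i$, the specific canonical monomial in which the $i+p$ left variables appear in increasing order and the $p+q-i$ right variables likewise, verify via the evaluation above that its coefficient in $a_{p,q}^{(i)}$ is a nonzero multiple of a product of Vandermonde determinants, and read off $\gamma_i=0$.

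The main obstacle I anticipate is bookkeeping rather than conceptual: one must carefully account for the signs produced by the two independent alternations and confirm that after using $[y_1,y_2]=0$ to sort the $y$-variables within each block the surviving coefficient is genuinely nonzero (i.e.\ that the alternation does not annihilate the relevant monomial through a hidden repetition). The cleanest safeguard is to choose the evaluation parameters generically — for instance letting the $k$-th alternated variable of the first block be $(e_{1,1}+e_{3,3})+\xi_k\,e_{2,2}$ for distinct scalars $\xi_k$ — so that the left-block contribution is a nonzero Vandermonde determinant in the $\xi_k$ and the right-block contribution is likewise a nonzero Vandermonde in a second family of parameters; this makes the non-vanishing transparent and sidesteps any delicate cancellation argument.
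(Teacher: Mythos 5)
Your argument is correct, but it takes a genuinely different route from the paper's. You expand each $a_{p,q}^{(i)}$ modulo $[y_1,y_2]$ into the canonical monomials $y_1^{i+p-j}y_2^{j}\,z\,y_1^{q-i+j}y_2^{p-j}$ (with coefficients $(-1)^j\binom{p}{j}$), observe that every monomial in the support of $a_{p,q}^{(i)}$ has exactly $i+p$ even variables to the left of $z$, so the supports for distinct $i$ lie in disjoint strata, and then use independence of the canonical monomials to split a vanishing linear combination stratum by stratum; since the coefficient of the extreme monomial $y_1^{i+p}zy_1^{q-i}y_2^{p}$ is $1$, no determinant computation is actually needed to see that each $a_{p,q}^{(i)}$ is nonzero. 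Two small points deserve care. First, Proposition \ref{pro:A_1n=1} concerns multilinear monomials, whereas the monomials you use are multihomogeneous in two variables; the transfer is routine in characteristic zero (distinct exponent quadruples have disjoint multilinear supports under linearization), or can be bypassed entirely by the generic evaluation $y_k=\alpha_k(e_{1,1}+e_{3,3})+\beta_k e_{2,2}$, $z=e_{1,2}$, under which $a_{p,q}^{(i)}$ becomes $(\alpha_1\beta_2-\alpha_2\beta_1)^p\alpha_1^{i}\beta_1^{q-i}e_{1,2}$. Second, the quantity produced by the alternation is a power of a $2\times 2$ determinant (each alternation acts on a single left--right pair), not a Vandermonde determinant, though this does not affect the non-vanishing. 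By contrast, the paper takes the maximal $t$ with $\alpha_t\neq 0$, substitutes $y_1\mapsto y_1+y_3$, extracts the multihomogeneous component of degree $t+p$ in $y_1$ and $q-t$ in $y_3$, and kills all other terms with the single evaluation $y_1=e_{1,1}+e_{3,3}$, $y_2=y_3=e_{2,2}$, $z=e_{1,2}$. The paper's version is shorter and self-contained; yours makes explicit that $P_{m,1}(\mathcal{A}^1)$ is graded by the number of even variables preceding $z$ and reuses Proposition \ref{pro:A_1n=1}, which gives a cleaner structural picture.
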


\begin{proof}The proof is similar to that of Theorem 3 in \cite{valenti2002graded}. 
    Suppose that the polynomials $a_{p,q}^{(i)}$ are linearly dependent. Modulo  $\Id_{\mathbb{Z}_2}(\mathcal{A}^1)$, there exist $\alpha_i$, $i=0$, \dots, $q$ such that 
\[ \sum_{i=0}^{q}\alpha_i a_{p,q}^{(i)}(y_1,y_2,z) = 0. \]
    Let $t = \max\{ i \mid \alpha_i \neq 0 \}$. Then 
\begin{equation} \label{eq:alpha_t}
         \alpha_t a_{p,q}^{(t)}(y_1,y_2,z) + \sum_{i < t}\alpha_i a_{p,q}^{(i)}(y_1,y_2,z) = 0. 
    \end{equation}
    Considering $y_1 = y_1 + y_3$ in (\ref{eq:alpha_t}), we have that 

 $   \begin{aligned}
    & \alpha_t\left(y_1+y_3\right)^t \overline{\left(y_1+y_3\right)} \cdots\widetilde{\left(y_1+y_3\right)} z \bar{y}_2 \cdots \tilde{y}_2\left(y_1+y_3\right)^{q-t} \\
    & \quad +\sum_{i<t} \alpha_i\left(y_1+y_3\right)^i \overline{\left(y_1+y_3\right)} \cdots\widetilde{\left(y_1+y_3\right)}  z \bar{y}_2 \cdots \tilde{y}_2\left(y_1+y_3\right)^{q-i} = 0.
    \end{aligned}$

Let us consider the homogeneous component of degree $t + p$ in the variable $y_1$ and  degree $q - t$ in the variable $y_3$, 
\[ \alpha_t y_1^{t}\bar{y}_1\cdots\tilde{y}_1 z \bar{y}_2\cdots\tilde{y}_2 y_3^{q-t} + \cdots = 0.  \]
Substituting $y_1 = e_{1,1} + e_{3,3}$, $y_2 = y_3 = e_{2,2}$ and $z = e_{1,2}$, we obtain $\alpha_t\cdot e_{1,2} = 0$, so $\alpha_t = 0$ and we have the desired result.
\end{proof}

\begin{proposition} \label{pro:m_(1)}
     $m_{\lambda,(1)} = q + 1$, if $\lambda = (p+q,p) \vdash m$.
\end{proposition}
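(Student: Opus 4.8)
The goal is to show that the multiplicity $m_{\lambda,(1)}$ equals $q+1$ when $\lambda=(p+q,p)$. Since multiplicities in the graded cocharacter count the number of linearly independent highest weight vectors of the corresponding shape modulo $\Id_{\mathbb{Z}_2}(\mathcal{A}^1)$, the plan is to prove the two matching inequalities.

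The plan is first to establish the lower bound $m_{\lambda,(1)}\geq q+1$. For this I would invoke Proposition \ref{pro:linear_inde}: the $q+1$ polynomials $a_{p,q}^{(i)}(y_1,y_2,z)$, $i=0,\dots,q$, are each highest weight vectors associated to a standard Young tableau of shape $(\lambda,(1))$ (with the single $z$-box forming the one-row partition $(1)$), and they are linearly independent modulo $\Id_{\mathbb{Z}_2}(\mathcal{A}^1)$. Because the number of linearly independent highest weight vectors of a fixed shape is exactly the multiplicity, this already gives $m_{\lambda,(1)}\geq q+1$.

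Next I would prove the reverse inequality $m_{\lambda,(1)}\leq q+1$. The idea is that every highest weight vector of shape $((p+q,p),(1))$ is, modulo the identity $[y_1,y_2]=0$, a linear combination of the $a_{p,q}^{(i)}$. Concretely, a highest weight vector of this shape is obtained by alternating $y_1,y_2$ along the two columns of the tableau $T_\lambda$, producing a product of $p$ alternating pairs together with $q$ further copies of $y_1$ and one occurrence of $z$. Using the description of $P_{m,1}(\mathcal{A}^1)$ via (\ref{eq:A_1n=1})—namely that modulo the identities every monomial reduces to $y_{i_1}\cdots y_{i_s}\,z\,y_{j_1}\cdots y_{j_t}$ with the $y$'s ordered—the single variable $z$ must occupy one of the gaps among the alternating factors; and since $[y_1,y_2]=0$ lets us commute any unalternated $y_1$ past the others, the only genuine freedom left is the position of $z$ relative to the $p$-block of alternations, i.e.\ how many of the $q$ free copies of $y_1$ sit to the left of $z$. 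That position can take precisely the $q+1$ values $i=0,\dots,q$, recovering exactly the spanning set $\{a_{p,q}^{(i)}\}$. Hence the space of highest weight vectors of this shape has dimension at most $q+1$.

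Combining the two bounds yields $m_{\lambda,(1)}=q+1$. The main obstacle will be the upper bound: one must argue carefully that, after alternation, the essential combinatorial datum of a highest weight vector is only the number of free $y_1$'s preceding $z$, so that no highest weight vectors outside the span of the $a_{p,q}^{(i)}$ can arise. This requires controlling how the antisymmetrizer for shape $\lambda$ interacts with the reduction rule (\ref{eq:A_1n=1}) and with the commutativity of the even variables, ensuring that the two columns force the alternating structure while the commutator identity collapses all remaining reorderings; the lower bound, by contrast, is immediate from Proposition \ref{pro:linear_inde}.
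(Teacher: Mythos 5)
Your proposal is correct and follows essentially the same route as the paper: the lower bound comes from Proposition \ref{pro:linear_inde} since the $a_{p,q}^{(i)}$ are (up to linearization) the highest weight vectors attached to the tableaux $T_\lambda^{(i)}$, $T_\mu^{(i)}$, and the upper bound comes from observing that $[y_1,y_2]=0$ forces the two members of each alternating pair to lie on opposite sides of $z$, so the only remaining datum is the number of free $y_1$'s preceding $z$, leaving exactly the $q+1$ polynomials $a_{p,q}^{(i)}$ as a spanning set. The paper's argument for the upper bound is exactly the observation you flag as the ``main obstacle,'' stated just as briefly.
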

\begin{proof}
For every $i$,  $e_{T_{\lambda}^{(i)}}e_{T_{\mu}^{(i)}}(y_1,\dots,y_{m},z)$  is the complete linearization of the polynomial $a_{p,q}^{(i)}(y_1,y_2,z)$. By Proposition \ref{pro:linear_inde} $m_{\lambda,(1)} \geq q+1$ if $\lambda = (p+q,p) \vdash m$. Also,  given $T_{\lambda}$ and $T_{\mu}$ two tableaux and $f = e_{T_{\lambda}^{(i)}}e_{T_{\mu}^{(i)}}(y_1,\dots,y_{m},z)  \notin \Id_{\mathbb{Z}_2}(\mathcal{A}^1)$, any two alternating variables in $f$ must lie on different sides of $z$. Since $f$ is a linear combination (modulo $\Id_{\mathbb{Z}_2}(\mathcal{A}^1)$) of polynomials, each alternating on $p$ pairs of $y_i$'s, we have that $f$ is a linear combination (modulo $\Id_{\mathbb{Z}_2}(\mathcal{A}^1)$) of the polynomials $e_{T_{\lambda}^{(i)}}e_{T_{\mu}^{(i)}}$.  Hence  $m_{\lambda,(1)} = q + 1$, if $\lambda = (p+q,p) \vdash m$. 
\end{proof}

Now,  consider the case $n=2$,  $P_{m,2}(\mathcal{A}^1) = \Span\{ y_{i_1}y_{i_2} \cdots y_{i_s} z_{l_1}  y_{j_1}y_{j_2} \cdots y_{i_t} z_{l_2}  \}$.
For $\mu \vdash 2$ we have the possibilities $\mu = (2)$ and $\mu = (1,1)$.  Define the tableaux $T_{\lambda}^{(i)}$ as 
$$\begin{tabular}{|c|c|c|c|c|c|c|c|c|c|c|}
\hline$i+1$ & $i+2$ & $\cdots$ & $i+p$ & 1 & 2 & $\cdots$ & $i$ & $i+2 p+2$ & $\cdots$ & $2p+q+1$ \\
\hline$i+p+2$ & $i+p+3$ & $\cdots$ & $i+2 p+1$ & \multicolumn{7}{|c}{}  \\
\cline { 1 - 4 } 
\end{tabular}
$$
and
$$
T_{(2)}^{(i)}= \begin{tabular}{|l|l|}
\hline $i+p+1$ & $2p +q+ 2$ \\
\hline 
\end{tabular}, \quad 
T_{(1,1)}^{(i)}= \begin{tabular}{|l|}
\hline $i+p+1$ \\
\hline $2p +q+ 2$ \\
\hline 
\end{tabular} 
$$
Then, the associated polynomials are 
\begin{equation} \label{eq:yzyz}
    a_{p,q}^{(i)}(y_1,y_2,z) = y_1^i \underbrace{\bar{y}_1 \cdots \tilde{y}_1}_p z \underbrace{\bar{y}_2 \cdots \tilde{y}_2}_p y_1^{q-i}z, 
\end{equation}
and  respectively 
\begin{equation}\label{eq:yzyz2}
    a_{p,q}^{(i)}(y_1,y_2,z_1,z_2) = y_1^i \underbrace{\bar{y}_1 \cdots \tilde{y}_1}_p \check{z}_1 \underbrace{\bar{y}_2 \cdots \tilde{y}_2}_p y_1^{q-i} \check{z}_2, 
\end{equation}
First we consider the case of the polynomials (\ref{eq:yzyz}).

\begin{proposition} \label{pro:linear_inde2}
    Modulo $\Id_{\mathbb{Z}_2}(\mathcal{A}^1)$, the polynomials $a_{p,q}^{(i)}(y_1,y_2,z)$ as in (\ref{eq:yzyz}) are linearly independent, $i = 0$, \dots, $q$.  
\end{proposition}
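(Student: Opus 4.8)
The plan is to adapt the argument of Proposition \ref{pro:linear_inde} to the present situation, the only genuinely new feature being the trailing odd variable. Because now $z$ occurs twice, a single matrix unit will not detect the polynomial: the first occurrence must realize the step $e_{1,2}$ and the second the step $e_{2,3}$, so I would evaluate $z$ on the homogeneous degree-$1$ element $z = e_{1,2} + e_{2,3}$ of $\mathcal{A}^1$. With this choice the product traces the path $1 \to 2 \to 3$ through the matrix, the $e_{2,3}$-part of the first $z$ and the $e_{1,2}$-part of the second $z$ being annihilated by the surrounding factors, so that the monomial (\ref{eq:yzyz}) lands nontrivially in $e_{1,3}$, exactly as in the computation recorded for the case $n=2$ preceding (\ref{eq:A_1n=2}).

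Concretely, I would suppose a nontrivial relation $\sum_{i=0}^{q}\alpha_i a_{p,q}^{(i)}(y_1,y_2,z) \equiv 0$ modulo $\Id_{\mathbb{Z}_2}(\mathcal{A}^1)$, set $t = \max\{ i \mid \alpha_i \neq 0\}$, and then apply the substitution $y_1 \mapsto y_1 + y_3$. Extracting the multihomogeneous component of degree $t+p$ in $y_1$ and degree $q-t$ in $y_3$ produces a relation whose leading term is $\alpha_t\, y_1^{t}\,\bar y_1\cdots\tilde y_1\, z\, \bar y_2\cdots\tilde y_2\, y_3^{\,q-t}\, z$ together with further terms coming from the indices $i<t$ and from the cross-terms of the substitution. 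Finally I would evaluate at $y_1 = e_{1,1}+e_{3,3}$, $y_2 = y_3 = e_{2,2}$ and $z = e_{1,2}+e_{2,3}$.

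The key point, which I expect to be the main obstacle to state cleanly, is that this last evaluation annihilates every unwanted term and leaves only the $i=t$ contribution. This follows from a positional analysis: for a monomial to evaluate nonzero, every factor occurring before the first $z$ must act at row $1$ and hence must be $e_{1,1}+e_{3,3}=y_1$ (the $y_3=e_{2,2}$ part dies because its $(1,1)$-entry vanishes), while every factor occurring between the two $z$'s acts at row $2$ and hence must be $e_{2,2}$, i.e.\ a copy of $y_2$ or $y_3$ (the $y_1=e_{1,1}+e_{3,3}$ part dies because its $(2,2)$-entry vanishes). Thus a surviving term has all of its $y_3$'s in the suffix, contributing $y_3$-degree $q-i$; since we fixed the $y_3$-degree to be $q-t$, only $i=t$ survives. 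The column alternations between the left block ($y_1$ at row $1$) and the $y_2$-block ($y_2$ at row $2$) survive with coefficient $\pm 1$, the swapped assignments being killed by the same row constraints. Hence the evaluation yields $\alpha_t\,(\pm 1)\,e_{1,3}=0$, forcing $\alpha_t=0$, contrary to the choice of $t$. Therefore all $\alpha_i$ vanish and the polynomials are linearly independent.
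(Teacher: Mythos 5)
Your proposal is correct and follows essentially the same route as the paper's proof: the same substitution $y_1\mapsto y_1+y_3$, extraction of the multihomogeneous component of degree $t+p$ in $y_1$ and $q-t$ in $y_3$, and the evaluation $y_1=e_{1,1}+e_{3,3}$, $y_2=y_3=e_{2,2}$, $z=e_{1,2}+e_{2,3}$ yielding $\alpha_t e_{1,3}=0$. The positional analysis you spell out (rows forcing all $y_3$'s between the two $z$'s, hence $i=t$) is exactly the justification the paper leaves implicit.
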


\begin{proof} 
    Suppose that the polynomials $a_{p,q}^{(i)}$ are linearly dependent. Then, modulo  $\Id_{\mathbb{Z}_2}(\mathcal{A}^1)$ there exist $\alpha_i$, $i=0$, \dots, $q$ such that 
\[ \sum_{i=0}^{q}\alpha_i a_{p,q}^{(i)}(y_1,y_2,z) = 0. \]
    Let $t = \max\{ i \mid \alpha_i \neq 0 \}$. Then, 
\begin{equation} \label{eq:alpha_t2}
         \alpha_t a_{p,q}^{(t)}(y_1,y_2,z) + \sum_{i < t}\alpha_i a_{p,q}^{(i)}(y_1,y_2,z) = 0. 
    \end{equation}
    Considering $y_1 = y_1 + y_3$ in (\ref{eq:alpha_t2}), we have that 

 $   \begin{aligned}
    & \alpha_t\left(y_1+y_3\right)^t \overline{\left(y_1+y_3\right)} \cdots\widetilde{\left(y_1+y_3\right)} z \bar{y}_2 \cdots \tilde{y}_2\left(y_1+y_3\right)^{q-t}z \\
    & \quad +\sum_{i<t} \alpha_i\left(y_1+y_3\right)^i \overline{\left(y_1+y_3\right)} \cdots\widetilde{\left(y_1+y_3\right)}  z \bar{y}_2 \cdots \tilde{y}_2\left(y_1+y_3\right)^{q-i}z = 0.
    \end{aligned}$

We consider the homogeneous component of degree $t + p$ in $y_1$ and of degree $q - t$ in $y_3$, 
\[ \alpha_t y_1^{t}\bar{y}_1\cdots\tilde{y}_1 z \bar{y}_2\cdots\tilde{y}_2 y_3^{q-t}z + \cdots = 0.  \]
Substituting $y_1 = e_{1,1} + e_{3,3}$, $y_2 = y_3 = e_{2,2}$ and $z = e_{1,2} + e_{2,3}$, we obtain $\alpha_t\cdot e_{1,3} = 0$, so $\alpha_t = 0$ and we have the result.
\end{proof}

From Proposition \ref{pro:linear_inde2} and with a similar argument to Proposition \ref{pro:m_(1)}, we obtain

\begin{proposition} \label{pro:m_(2)}
    $m_{\lambda,(2)} = q + 1$, if $\lambda = (p+q,p) \vdash m$.
\end{proposition}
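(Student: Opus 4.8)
The plan is to mirror the argument already used for $m_{\lambda,(1)}$ in Proposition \ref{pro:m_(1)}, now for the symmetric partition $\mu=(2)$. The statement to prove is that $m_{\lambda,(2)}=q+1$ whenever $\lambda=(p+q,p)\vdash m$. I would establish the two inequalities separately, exactly as was done in the $n=1$ case.

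First, for the lower bound $m_{\lambda,(2)}\geq q+1$, I would invoke Proposition \ref{pro:linear_inde2}, which was just proved: the polynomials $a_{p,q}^{(i)}(y_1,y_2,z)$ of the form (\ref{eq:yzyz}), for $i=0,\dots,q$, are linearly independent modulo $\Id_{\mathbb{Z}_2}(\mathcal{A}^1)$. Each such $a_{p,q}^{(i)}$ arises (up to the complete linearization procedure) from the pair of tableaux $T_\lambda^{(i)}$ and $T_{(2)}^{(i)}$, so the essential idempotent $e_{T_\lambda^{(i)}}e_{T_{(2)}^{(i)}}$ produces a nonzero highest weight vector whose value is (the linearization of) $a_{p,q}^{(i)}$. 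Since these $q+1$ vectors are linearly independent modulo the graded identities, the multiplicity of $\chi_\lambda\otimes\chi_{(2)}$ is at least $q+1$.

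For the upper bound $m_{\lambda,(2)}\leq q+1$, I would repeat the structural argument of Proposition \ref{pro:m_(1)}. A nonzero polynomial $f=e_{T_\lambda}e_{T_{(2)}}$, up to the graded identities of $\mathcal{A}^1$, must be a combination of monomials of the form (\ref{eq:A_1n=2}), namely $y\cdots y\,z_{l_1}\,y\cdots y\,z_{l_2}$, because $P_{m,2}(\mathcal{A}^1)$ is spanned by these (and recall $z_1z_2z_3=0$ kills anything with three or more odd variables, while two odd variables cannot sit on the same side after reduction). The alternation forced by the column structure of $T_\lambda$ on pairs of $y$-variables, combined with the fact (seen in the proof of Proposition \ref{pro:m_(1)}) that any two alternating variables must straddle an odd variable, shows that $f$ is a linear combination modulo $\Id_{\mathbb{Z}_2}(\mathcal{A}^1)$ of precisely the $q+1$ polynomials $e_{T_\lambda^{(i)}}e_{T_{(2)}^{(i)}}$. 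The only subtlety distinguishing this from the $n=1$ case is that here the two $z$'s are symmetrized (the shape $(2)$), but since in (\ref{eq:yzyz}) both positions are filled with the same variable $z$, the symmetrization acts trivially and the counting is unchanged.

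I expect the main obstacle to be the clean verification that the symmetric Young symmetrizer on the two $z$-boxes does not create additional independent vectors beyond the $q+1$ already exhibited: one must check that any monomial placing the two symmetric odd variables on the same side of the remaining $y$-block reduces, via $z_1z_2z_3=0$ together with identity (c) $[y_1,z_1z_2]=0$, to the standard straddling form, so that no new multiplicity appears. Once this reduction is in place, the equality $m_{\lambda,(2)}=q+1$ follows immediately by combining the two bounds, just as in Proposition \ref{pro:m_(1)}. For this reason I would simply state that the argument is completely analogous to that of Proposition \ref{pro:m_(1)}, citing Proposition \ref{pro:linear_inde2} for the independence and the span description (\ref{eq:A_1n=2}) for the upper bound.
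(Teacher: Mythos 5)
Your proposal is correct and follows essentially the same route as the paper, which likewise deduces the result from Proposition \ref{pro:linear_inde2} together with the structural argument of Proposition \ref{pro:m_(1)}. Your additional remark that the symmetrization over the two odd boxes acts trivially (since both are filled with the same variable $z$) is a reasonable elaboration of the "similar argument" the paper leaves implicit.
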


Finally, we consider the case of the polynomials (\ref{eq:yzyz2}), 
\[   a_{p,q}^{(i)}(y_1,y_2,z_1,z_2) = y_1^i \underbrace{\bar{y}_1 \cdots \tilde{y}_1}_p \check{z}_1 \underbrace{\bar{y}_2 \cdots \tilde{y}_2}_p y_1^{q-i} \check{z}_2.   \]

\begin{proposition} \label{pro:linear_inde3}
    Modulo $\Id_{\mathbb{Z}_2}(\mathcal{A}^1)$, the polynomials $a_{p,q}^{(i)}(y_1,y_2,z_1,z_2)$ as in (\ref{eq:yzyz2}) are linearly independent, $i = 0$,  \dots, $q$.  
\end{proposition}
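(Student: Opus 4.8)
The plan is to follow closely the arguments of Propositions \ref{pro:linear_inde} and \ref{pro:linear_inde2}, the only genuinely new ingredient being the evaluation of the two skew variables $z_1$, $z_2$. First I would assume a nontrivial dependence $\sum_{i=0}^{q}\alpha_i a_{p,q}^{(i)}(y_1,y_2,z_1,z_2)=0$ modulo $\Id_{\mathbb{Z}_2}(\mathcal{A}^1)$ and set $t=\max\{i\mid \alpha_i\neq 0\}$, so that the relation reads $\alpha_t a_{p,q}^{(t)}+\sum_{i<t}\alpha_i a_{p,q}^{(i)}=0$. Replacing $y_1$ by $y_1+y_3$ and extracting the multihomogeneous component of degree $t+p$ in $y_1$ and degree $q-t$ in $y_3$ then isolates the $t$-th summand, leaving $\alpha_t\, y_1^{t}\bar y_1\cdots\tilde y_1\,\check z_1\,\bar y_2\cdots\tilde y_2\,y_3^{q-t}\check z_2+\cdots=0$, exactly as in (\ref{eq:alpha_t2}). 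Note that the substitution $y_1\mapsto y_1+y_3$ does not touch the $z$-variables, so the skew-symmetrization $\check z_1\cdots\check z_2$ is carried along unchanged and plays no role until the final evaluation.

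Next I would evaluate. Keeping $y_1=e_{1,1}+e_{3,3}$ and $y_2=y_3=e_{2,2}$ as before, the computation of the $y$-part of the leading term is identical to that in Proposition \ref{pro:linear_inde2}, and the auxiliary (lower) terms collected in $\cdots$ vanish under this substitution. The new point is the choice $z_1=e_{1,2}$ and $z_2=e_{2,3}$, in place of the single symmetric value $z=e_{1,2}+e_{2,3}$ used previously. Because $\check z_1\cdots\check z_2$ is antisymmetric in the two $z$-slots, its value is $W(z_1,z_2)-W(z_2,z_1)$, where $W$ denotes the leading word with the middle and final $z$-slots filled in the indicated order. In $W(z_2,z_1)$ the middle slot carries $e_{2,3}$, preceded only by factors equal to $e_{1,1}+e_{3,3}$; since $(e_{1,1}+e_{3,3})e_{2,3}=0$, this whole summand vanishes. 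In $W(z_1,z_2)$ the middle slot carries $e_{1,2}$ and the final slot carries $e_{2,3}$, while the intervening factors (powers of $e_{1,1}+e_{3,3}$ on the left and of $e_{2,2}$ between the two $z$'s) fix the running product, giving $e_{1,2}e_{2,3}=e_{1,3}$. Hence the leading term evaluates to $\alpha_t e_{1,3}$, forcing $\alpha_t=0$, contrary to the choice of $t$; therefore all the $\alpha_i$ vanish.

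The step I expect to require the most care is confirming that, after extracting the multihomogeneous component, the terms hidden in $\cdots$ really die under the chosen evaluation, so that only the pure leading monomial survives; this is where the alternation structure of the $\bar y\,/\,\tilde y$ blocks is used, exactly as in the proof of Proposition \ref{pro:linear_inde2}. The conceptual novelty is minimal but decisive: the antisymmetry in $z_1$, $z_2$ is precisely what legitimizes the asymmetric substitution $z_1=e_{1,2}$, $z_2=e_{2,3}$, and the vanishing of $W(z_2,z_1)$ (via $(e_{1,1}+e_{3,3})e_{2,3}=0$) is what detects the partition $(1,1)$ rather than $(2)$ in the odd variables.
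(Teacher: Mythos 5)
Your argument is correct and follows essentially the same route as the paper: the substitution $y_1\mapsto y_1+y_3$, extraction of the multihomogeneous component of degree $t+p$ in $y_1$ and $q-t$ in $y_3$, and the final evaluation $y_1=e_{1,1}+e_{3,3}$, $y_2=y_3=e_{2,2}$, $z_1=e_{1,2}$, $z_2=e_{2,3}$ are exactly what the paper does. Your extra discussion of why the alternation on $z_1,z_2$ reduces to the single word $W(z_1,z_2)$ (the paper disposes of this in its opening remark) is welcome detail; just note that in $W(z_2,z_1)$ the factors preceding the middle slot need not all equal $e_{1,1}+e_{3,3}$ because of the $y$-alternation, though every such term still vanishes.
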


\begin{proof} 
If we consider the evaluation  $z_1 = e_{1,2}$ and $z_2 = e_{2,3}$, then
    \[   a_{p,q}^{(i)}(y_1,y_2,z_1,z_2) = y_1^i \underbrace{\bar{y}_1 \cdots \tilde{y}_1}_p z_1 \underbrace{\bar{y}_2 \cdots \tilde{y}_2}_p y_1^{q-i} z_2.   \] 
    Suppose  the polynomials $a_{p,q}^{(i)}$ are linearly dependent. Modulo  $\Id_{\mathbb{Z}_2}(\mathcal{A}^1)$ there exist $\alpha_i$, $i=0$, \dots, $q$ such that 
\[ \sum_{i=0}^{q}\alpha_i a_{p,q}^{(i)}(y_1,y_2,z_1,z_2) = 0. \]
    Let $t = \max\{ i \mid \alpha_i \neq 0 \}$. Then, 
\begin{equation} \label{eq:alpha_t3}
         \alpha_t a_{p,q}^{(t)}(y_1,y_2,z_1,z_2) + \sum_{i < t}\alpha_i a_{p,q}^{(i)}(y_1,y_2,z_1,z_2) = 0. 
    \end{equation}
    Considering $y_1 = y_1 + y_3$ in (\ref{eq:alpha_t3}), we have that 

 $   \begin{aligned}
    & \alpha_t\left(y_1+y_3\right)^t \overline{\left(y_1+y_3\right)} \cdots\widetilde{\left(y_1+y_3\right)} \check{z}_1  \bar{y}_2 \cdots \tilde{y}_2\left(y_1+y_3\right)^{q-t}\check{z}_2 \\
    & \quad +\sum_{i<t} \alpha_i\left(y_1+y_3\right)^i \overline{\left(y_1+y_3\right)} \cdots\widetilde{\left(y_1+y_3\right)}  \check{z}_1  \bar{y}_2 \cdots \tilde{y}_2\left(y_1+y_3\right)^{q-i}\check{z}_2 = 0.
    \end{aligned}$

The homogeneous component of degree $t + p$ in $y_1$ and of degree $q - t$ in $y_3$ is
\[ \alpha_t y_1^{t}\bar{y}_1\cdots\tilde{y}_1 \check{z}_1  \bar{y}_2\cdots\tilde{y}_2 y_3^{q-t} \check{z}_2  + \cdots = 0.  \]
Substituting $y_1 = e_{1,1} + e_{3,3}$, $y_2 = y_3 = e_{2,2}$,  $z_1 = e_{1,2}$ and $ z_2 =  e_{2,3}$, we obtain $\alpha_t\cdot e_{1,3} = 0$, so $\alpha_t = 0$ and we are done.
\end{proof}

\begin{proposition} \label{pro:m_(1,1)}
    $m_{\lambda,\mu} = q + 1$, if $\lambda = (p+q,p) \vdash m$ and $\mu = (1,1)$.
\end{proposition}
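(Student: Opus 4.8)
The plan is to repeat the two–step scheme used in Proposition \ref{pro:m_(1)}, now feeding in Proposition \ref{pro:linear_inde3} as the independence input instead of Proposition \ref{pro:linear_inde}. First I would establish the lower bound $m_{\lambda,(1,1)}\ge q+1$. For each $i=0,\dots,q$, the complete linearization of the polynomial $a_{p,q}^{(i)}(y_1,y_2,z_1,z_2)$ from (\ref{eq:yzyz2}) is, up to a nonzero scalar, the highest weight vector $e_{T_\lambda^{(i)}}e_{T_{(1,1)}^{(i)}}$ attached to the tableaux $T_\lambda^{(i)}$ and $T_{(1,1)}^{(i)}$ displayed just before (\ref{eq:yzyz2}). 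Since Proposition \ref{pro:linear_inde3} shows that these $q+1$ polynomials are linearly independent modulo $\Id_{\mathbb{Z}_2}(\mathcal{A}^1)$, the corresponding highest weight vectors are independent, and therefore $m_{\lambda,(1,1)}\ge q+1$.

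For the reverse inequality I would argue as in Proposition \ref{pro:m_(1)}. Let $f$ be a nonzero highest weight vector of shape $(\lambda,(1,1))$ with $f\notin\Id_{\mathbb{Z}_2}(\mathcal{A}^1)$. By (\ref{eq:A_1n=2}), modulo the graded identities $f$ is a linear combination of monomials of the form $y\cdots y\, z_{l_1}\, y\cdots y\, z_{l_2}$; the antisymmetry in $z_1,z_2$ demanded by $\mu=(1,1)$ is exactly the $\check z$ skew-symmetrization already present in (\ref{eq:yzyz2}). The alternation prescribed by $\lambda=(p+q,p)$ produces $p$ alternating pairs of $y$'s, and the crucial structural observation is that two alternated variables cannot sit on the same side of the central factor $z_{l_1}$: if they did, the relevant diagonal matrix entries would coincide and the alternation would annihilate that monomial modulo $\Id_{\mathbb{Z}_2}(\mathcal{A}^1)$. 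Hence each alternating pair is forced to straddle $z_{l_1}$, so $f$ must be a linear combination (modulo the $T_2$-ideal) of the polynomials $e_{T_\lambda^{(i)}}e_{T_{(1,1)}^{(i)}}$, $i=0,\dots,q$. This yields $m_{\lambda,(1,1)}\le q+1$, and together with the lower bound gives $m_{\lambda,(1,1)}=q+1$.

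The step I expect to require the most care is the last structural reduction: verifying that every admissible highest weight vector of shape $(\lambda,(1,1))$ is captured by these $q+1$ distinguished tableaux, i.e. that the alternation genuinely forces the $p$ pairs to straddle $z_{l_1}$ and that the position of $z_{l_2}$ (at the end, by (\ref{eq:A_1n=2})) leaves no additional independent configuration surviving modulo the identities. The remaining manipulations—identifying the complete linearizations with the $e_{T_\lambda^{(i)}}e_{T_{(1,1)}^{(i)}}$ and carrying out the index count $i=0,\dots,q$—are formal and proceed exactly as in Propositions \ref{pro:m_(1)} and \ref{pro:m_(2)}.
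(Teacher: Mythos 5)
Your proposal is correct and follows essentially the same two-step argument as the paper: the lower bound via the complete linearizations of the $a_{p,q}^{(i)}$ from (\ref{eq:yzyz2}) together with Proposition \ref{pro:linear_inde3}, and the upper bound via the observation that, since the even component is commutative, alternating $y$'s must lie on different sides of the $z$'s in the normal form (\ref{eq:A_1n=2}), which forces every highest weight vector to be a combination of the $q+1$ polynomials $e_{T_\lambda^{(i)}}e_{T_{(1,1)}^{(i)}}$.
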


\begin{proof}
    For every $i$,  $e_{T_{\lambda}^{(i)}}e_{T_{\mu}^{(i)}}(y_1,\dots,y_{m},z_1,z_2)$  is the complete linearisation of the element $a_{p,q}^{(i)}(y_1,y_2,z_1,z_2)$. By Proposition \ref{pro:linear_inde3}, $m_{\lambda,\mu} \geq q+1$ if $\lambda = (p+q,p) \vdash m$ and $\mu = (1,1)$. Also,  given $T_{\lambda}$ and $T_{\mu}$, and $f = e_{T_{\lambda}^{(i)}}e_{T_{\mu}^{(i)}}(y_1,\dots,y_{m},z_1,z_2)  \notin \Id_{\mathbb{Z}_2}(\mathcal{A}^1)$, any two alternating variables in $f$ must lie on different sides of $z_1$ or $z_2$, or the  two alternating variables are $z_1$ and $z_2$. Since $f$ is a linear combination (modulo $\Id_{\mathbb{Z}_2}(\mathcal{A}^1)$) of polynomials, each alternating on $p$ pairs of $y_i$'s, and alternating on $z_i$'s, we have that $f$ is a linear combination (modulo $\Id_{\mathbb{Z}_2}(\mathcal{A}^1)$) of the polynomials $e_{T_{\lambda}^{(i)}}e_{T_{\mu}^{(i)}}$.  Hence  $m_{\lambda,(1,1)} = q + 1$, if $\lambda = (p+q,p) \vdash m$. 
\end{proof}

Finally, based on the previous results, we have the following theorem on the graded cocharacters of the algebra $\mathcal{A}^1$.

\begin{theorem}
Let 
\[ \chi_{m,n}(\mathcal{A}^1) = \sum_{(\lambda, \mu) \vdash (m,n) } m_{\lambda,\mu}\chi_{\lambda} \otimes \chi_{\mu} \]
be the $(m,n)$-cocharacter of $\mathcal{A}^1$. Then
\begin{itemize}
    \item[(1)] $m_{\lambda,\emptyset} = 1$, if $\lambda = (m) \vdash m$;

    \item[(2)] $m_{\lambda,(1)} = q+1$, if $\lambda = (p+q,p) \vdash m$;

    \item[(3)] $m_{\lambda,\mu} = q+1$, if $\lambda = (p+q,p) \vdash m$ and $\mu \vdash 2$.    
\end{itemize}
    In all remaining cases $m_{\lambda,\mu} = 0$.
\end{theorem}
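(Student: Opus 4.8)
The plan is to assemble the theorem from the multiplicity computations already carried out in the preceding propositions, organizing the argument according to the number $n$ of skew variables.

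First I would dispose of the vanishing cases. Since $z_1 z_2 z_3 = 0$ is a graded identity of $\mathcal{A}^1$, we have $P_{m,n}(\mathcal{A}^1) = 0$ for every $n \geq 3$, so $m_{\lambda,\mu} = 0$ whenever $\mu \vdash n$ with $n \geq 3$. Moreover, as already observed from the identity $[y_1, y_2] = 0$, one has $m_{\lambda,\mu} = 0$ whenever $h(\lambda) > 2$. This reduces the problem to $n \in \{0, 1, 2\}$ together with $\lambda = (p+q, p)$, a partition of $m$ with at most two parts (the one-row case being recovered by $p = 0$).

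Next I would treat each admissible value of $n$ separately, in each case simply invoking the multiplicity already established. For $n = 0$, the equality $P_{m,0}(\mathcal{A}^1) = \Span\{y_1 \cdots y_m\}$ exhibits this space as the trivial one-dimensional $S_m$-module, so $m_{(m),\emptyset} = 1$ while $m_{\lambda,\emptyset} = 0$ for every $\lambda \neq (m)$; this is item (1). For $n = 1$, Proposition \ref{pro:m_(1)} gives $m_{\lambda,(1)} = q+1$ for $\lambda = (p+q, p)$, which is item (2). For $n = 2$ there are precisely two partitions of $2$, namely $(2)$ and $(1,1)$; Propositions \ref{pro:m_(2)} and \ref{pro:m_(1,1)} yield $m_{\lambda,(2)} = q+1$ and $m_{\lambda,(1,1)} = q+1$ respectively, and together they constitute item (3).

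Collecting these computations, and recording that every pair $(\lambda, \mu)$ not covered above produces multiplicity zero, establishes the stated cocharacter. I do not anticipate a genuine obstacle here, since the substantive work lies in the earlier propositions: the lower bounds $m_{\lambda,\mu} \geq q+1$ come from the linear independence of the polynomials $a_{p,q}^{(i)}$ (Propositions \ref{pro:linear_inde}, \ref{pro:linear_inde2} and \ref{pro:linear_inde3}), while the matching upper bounds come from the reduction showing that any nonzero highest weight vector is, modulo $\Id_{\mathbb{Z}_2}(\mathcal{A}^1)$, a combination of the $e_{T_\lambda^{(i)}} e_{T_\mu^{(i)}}$, the two alternating $y$'s of each pair being forced onto opposite sides of some $z$. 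The only point requiring care is the bookkeeping: confirming that the parametrization $\lambda = (p+q, p)$ exhausts all two-row shapes and that $(2)$ and $(1,1)$ exhaust the partitions of $2$, so that no admissible pair $(\lambda, \mu)$ is overlooked.
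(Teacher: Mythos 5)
Your proposal is correct and takes essentially the same route as the paper: the theorem there is presented as a direct assembly of the preceding results, namely the vanishing cases forced by $z_1z_2z_3=0$ and $[y_1,y_2]=0$, the one-dimensionality of $P_{m,0}(\mathcal{A}^1)$ for item (1), and Propositions~\ref{pro:m_(1)}, \ref{pro:m_(2)} and \ref{pro:m_(1,1)} for items (2) and (3). Nothing further is needed.
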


\section{Identities with involutions on $A$} \label{InvotionOnA}

We are interested in studying the identities of the algebra $A$, considering it as an algebra with involution. We can define on the algebra $A$ the involution $*$ obtained by reflecting a matrix along its secondary diagonal, that is  \[ \begin{pmatrix} d & a & c \\ 0 & g & b \\ 0 & 0 & d  \end{pmatrix}^{*} = \begin{pmatrix} d & b & c \\ 0 & g & a \\ 0 & 0 & d  \end{pmatrix}. \]
Then, 
\[ A^{+} = \begin{pmatrix} d & a & c \\ 0 & g & a \\ 0 & 0 & d  \end{pmatrix} \text{ and }  A^{-} =  \begin{pmatrix} 0 & a & 0 \\ 0 & 0 & -a \\ 0 & 0 & 0  \end{pmatrix}.
\]
This involution was studied in \cite{MISHCHENKO200066}.

Denote the free algebra with involution by $K\langle X, * \rangle = K\langle Y \cup  Z \rangle$ generated by symmetric and skew elements, that is \[ K\langle Y \cup  Z \rangle = K \langle y_{1}, z_{1},  y_{2},  z_{2}, , \dots \rangle \]
where $y_{i}$ stands for a symmetric variable  and $z_{i}$ for a skew-symmetric variable.

If in a polynomial it is allowed some variable to be either $y_i$ or $ z_i$, we denote it as  $x_i$, and  we set $|x_i|=0$ if $x_i=z_i$ and $|x_i|=1$ when $x_i=y_i$. Put  $|x_i x_j| = 0$  when the commutator $[x_i, x_j]$ is skew-symmetric, and $|x_i x_j|= 1$ if the commutator $[x_i, x_j]$ is symmetric. 

The following theorem describes a basis of $\Id^{*}(A)$.
\begin{theorem}
    The $T^{*}$-ideal $\Id^{*}(A)$ is generated by the following polynomials:
    
\begin{tabular}{llll} \label{th:Base_*}
    1. $z_1y_1z_2y_2z_3$ & 2. $[z_1,z_2]$ & 3. $[z_{1}y_1z_{2},y_2]$    &   4. $z_1y_1z_2 - z_2y_1z_1$ \\
    \multicolumn{4}{l}{ 5. $(-1)^{|x_1x_2|}[x_1,x_2][x_3,x_4] - (-1)^{|x_3x_4|}[x_3,x_4][x_1,x_2]$} \\
     \multicolumn{4}{l}{ 6. $(-1)^{|x_1x_2|}[x_1,x_2][x_3,x_4] - (-1)^{|x_1x_3|}[x_1,x_3][x_2,x_4] + (-1)^{|x_1x_4|}[x_1,x_4][x_2,x_3]$} \\
      \multicolumn{4}{l}{ 7. $ [x_1,x_2]z_5[x_3,x_4]$} 
\end{tabular}
\end{theorem}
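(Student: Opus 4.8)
The plan is to prove the two inclusions separately. That the seven polynomials lie in $\Id^*(A)$ is the routine direction and rests entirely on the structure of $A$: writing $J=\Span\{e_{1,2},e_{2,3},e_{1,3}\}$ for the Jacobson radical, one has $J^2=Ke_{1,3}$, $J^3=0$, the line $Ke_{1,3}$ is central in $A$, $[A,A]\subseteq J$, and, crucially, $A^-=Kw$ is one-dimensional with $w=e_{1,2}-e_{2,3}$. From $J^3=0$ and $z_i\in J$ one gets (1) and (7) at once; $\dim A^-=1$ forces $[z_1,z_2]=0$, i.e. (2), and makes $z_1y_1z_2=\lambda_1\lambda_2\,wy_1w$ symmetric in the two skew factors, which is (4); since $z_1y_1z_2\in J^2$ is central we obtain (3). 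Finally (5) and (6) follow from a short sign computation using $[x_i,x_j]^*=(-1)^{1+|x_ix_j|}[x_i,x_j]$ together with the fact that a product of two commutators lies in the central line $Ke_{1,3}$.

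For the reverse inclusion I would let $I$ be the $T^*$-ideal generated by (1)--(7) and show that $K\langle Y,Z,*\rangle/I$ is spanned by polynomials that stay linearly independent modulo $\Id^*(A)$. By the characteristic-zero reduction recalled in the preliminaries it suffices to treat multilinear $Y$-proper polynomials, that is, products of bare skew variables $z_j$ and left-normed commutators. The core is a normal-form procedure modulo $I$: first, every product of at least three such blocks is reduced to zero using (1), (7), and the relations among products of commutators coming from (5)--(6) (this reflects $J^3=0$); second, identities (2) and (4) let one commute and symmetrize the skew variables; third, the Jacobi identity together with (3), (5) and (6) brings the surviving one- and two-block polynomials into a canonical left-normed, index-ordered shape, while (5)--(6) themselves encode the antisymmetry and the Jacobi relation among the central products of two commutators. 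One is thereby left with an explicit finite spanning set of normal forms.

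Independence of these normal forms modulo $\Id^*(A)$ is established by evaluation in $A$: symmetric variables are specialized among $\{e_{1,1}+e_{3,3},\,e_{2,2},\,e_{1,2}+e_{2,3},\,e_{1,3}\}$ and skew variables to $w$, after which one reads off the coefficients of $e_{1,2}$ and $e_{2,3}$ for the single-block (non-central) part and the coefficient of $e_{1,3}$ for the two-block (central) part; this is a filtration argument through $J\supset J^2\supset 0$. Comparing the resulting ranks with the size of the spanning set forces the canonical surjection $K\langle Y,Z,*\rangle/I\twoheadrightarrow K\langle Y,Z,*\rangle/\Id^*(A)$ to be an isomorphism on each multilinear $Y$-proper component, which yields $I=\Id^*(A)$.

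The \textbf{main obstacle} is the normal-form reduction, where two points require genuine care. The first is the sign bookkeeping: each variable $x_i$ may be symmetric or skew, so every rearrangement of commutators carries a factor $(-1)^{|x_ix_j|}$, and one must verify that (5)--(6) suffice to straighten arbitrary products of two commutators consistently and, in particular, to prove that such products are central modulo $I$ (so that any third block annihilates them). The second is that commutators containing several skew variables do \emph{not} simply vanish: a direct computation gives, for instance, $[y_1,z_1,z_2]=-2\lambda_1\lambda_2(d-g)\,e_{1,3}\in Ke_{1,3}$, where $d$ and $g$ are the $(1,1)$- and $(2,2)$-entries of $y_1$. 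Hence such commutators survive as central contributions and must be absorbed into the normal form rather than discarded, and the delicate step is to confirm that identities (1)--(7) are exactly enough to account for all these central terms with no further independent relation.
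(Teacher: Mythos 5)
Your overall architecture --- verify the seven polynomials directly, then show that modulo the $T^*$-ideal $\mathcal{I}$ they generate, the multilinear components are spanned by explicit normal forms that remain linearly independent under evaluations in $A$ --- is the same as the paper's, and your verification of identities (1)--(7) via $J^3=0$, $\dim A^-=1$ and the centrality of $Ke_{1,3}$ is fine. The two routes diverge in the choice of spanning sets: you propose to stay inside the $Y$-proper subspace for every number $m$ of skew variables, whereas the paper uses $Y$-proper polynomials only when $m=0$ --- and there it does not argue by evaluation at all, but observes that $A^{+}=UT_3(K)^{+}$ and imports the $S_3$-standard basis of $\Gamma_{n,0}$ from \cite{di2006involutions} --- while for $m=1,2$ it works with ordinary monomial spanning sets of $P_{n,1}$ and $P_{n,2}$ (monomials $y_{i_1}\cdots y_{i_s}zy_{j_1}\cdots y_{j_t}$, possibly ending in a single commutator $[y_{i_{n-1}},y_{i_n}]$, and $y_{i_1}\cdots y_{i_s}z_1y_{j_1}\cdots y_{j_t}z_2$ with ordered indices). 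The monomial route neatly sidesteps the complication you correctly identify, namely that proper blocks such as $[y_1,z_1,z_2]$ do not vanish and would have to be tracked as central contributions in a fully proper normal form.

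The genuine shortfall is that the proposal stops exactly where the proof begins. You never exhibit the normal forms, and the two steps you flag as the ``main obstacle'' --- that (1)--(7) suffice to reach them, and that no further relations among them hold in $A$ --- are the entire content of the theorem. In the paper these are carried out in the reordering and spanning propositions (a two-case rewriting procedure that pushes a commutator to the right end of a monomial while keeping control of the index order) and, most delicately, in the linear-independence argument for the case of one skew variable, where one must pick a monomial extremal with respect to two parameters (the number of variables between $z$ and the commutator, and the smaller index inside the commutator) before a distinguishing evaluation exists; ``reading off the coefficients of $e_{1,2}$, $e_{2,3}$ and $e_{1,3}$'' is not enough on its own, since many normal forms evaluate to the same matrix unit and only the extremality argument separates them. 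So the plan is viable and close in spirit to the paper's, but as written it is a strategy statement rather than a proof.
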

The proof of Theorem \ref{th:Base_*} will be a consequence of the following propositions.

Let $\mathcal{I}$ be the $T^{*}$-ideal generated by the identities (1) to (7) of Theorem \ref{th:Base_*}. 

\begin{proposition} \label{pr:order*_m=2}
    The following is an identity modulo $\mathcal{I}$
    \begin{equation}
        y_{\sigma(i_1)}y_{\sigma(i_2)}\cdots y_{\sigma(i_s)}z_1y_{\rho(j_1)}y_{\rho(j_2)}\cdots y_{\rho(j_t)}z_2 = y_{i_1}y_{i_2}\cdots y_{i_s}z_1y_{j_1}y_{j_2}\cdots y_{j_t}z_2,
    \end{equation}
    for $\sigma \in S_s$ and $\rho \in S_t$.
\end{proposition}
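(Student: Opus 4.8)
**Proof strategy for Proposition \ref{pr:order*_m=2}.**

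The plan is to show that I can transpose any two adjacent $y$-variables lying on the same side of the two skew variables $z_1,z_2$, working modulo the $T^*$-ideal $\mathcal I$. Since every permutation in $S_s$ (acting on $y_{i_1},\dots,y_{i_s}$) and every permutation in $S_t$ (acting on $y_{j_1},\dots,y_{j_t}$) is a product of adjacent transpositions, it suffices to prove the two special cases
\[
 y_a y_b\, z_1\, w\, z_2 \equiv y_b y_a\, z_1\, w\, z_2 \quad\text{and}\quad u\, z_1\, y_c y_d\, z_2 \equiv u\, z_1\, y_d y_c\, z_2 \pmod{\mathcal I},
\]
where $u,w$ are the remaining (ordered) blocks of symmetric variables. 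The difference $y_a y_b - y_b y_a = [y_a,y_b]$, so the first congruence is equivalent to showing $[y_a,y_b]\,z_1\,w\,z_2 \equiv 0$, and similarly the second reduces to $u\,z_1\,[y_c,y_d]\,z_2 \equiv 0$, after one commutes $[y_c,y_d]$ past $u z_1$ (if necessary) using identities~2 and~5.

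First I would establish the elementary fact that, modulo $\mathcal I$, any monomial of the shape (symmetric block)$\,z_1\,$(symmetric block)$\,z_2$ containing a commutator of two symmetric variables in one of the blocks vanishes. The key input is identity~7, namely $[x_1,x_2]z_5[x_3,x_4]\equiv 0$: the presence of two $z$'s separated by a commutator forces collapse. More precisely, the commutator $[y_a,y_b]$ is symmetric (since $|y_ay_b|=1$ under the paper's convention), and once it sits on one side of $z_1$ while a lone skew variable sits on the other, identities~7 together with~5 and~6 let me rewrite the product so that $[y_a,y_b]$ is flanked by two skew elements, forcing it to zero. I expect to need identity~4 ($z_1y_1z_2-z_2y_1z_1$, the ``swap'' relation for the two skew variables) to move the skew variables into the position required to apply identity~7.

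The main obstacle will be the bookkeeping of signs and the careful reduction of an \emph{arbitrary} intermediate word—with several symmetric variables packed between the two $z$'s—down to the atomic pattern $[\,\cdot\,,\,\cdot\,]\,z\,[\,\cdot\,,\,\cdot\,]$ to which identity~7 applies directly. To manage this I would argue by induction on the length of the symmetric blocks: I first move the commutator $[y_a,y_b]$ adjacent to $z_1$ (legitimate because adjacent symmetric variables outside the commutator can be transposed by an already-proved shorter instance of the proposition, or absorbed), then use identities~3 and~4 to relocate the skew variables so that $[y_a,y_b]$ becomes sandwiched, and finally apply identity~7. The second congruence (transposing $y_c,y_d$ on the right of $z_1$) is symmetric to the first and follows by the same scheme, using the reflection symmetry of the defining relations. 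Once both adjacent transpositions are shown to hold modulo $\mathcal I$, composing them yields the full statement for arbitrary $\sigma\in S_s$ and $\rho\in S_t$.
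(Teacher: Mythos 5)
Your overall skeleton is the right one and matches the paper's: reduce to adjacent transpositions, observe that the difference of the two orderings is a term containing a commutator $[y_a,y_b]$, and kill that term modulo $\mathcal I$. However, there is a genuine error at the decisive step. You assert that $[y_a,y_b]$ is \emph{symmetric}; it is not. Since $(uv)^*=v^*u^*$, one has $[u,v]^*=-[u^*,v^*]$, so the commutator of two symmetric variables is \emph{skew}-symmetric (this is exactly the paper's remark that $[y_1,y_2]\in A^-$, and under the paper's convention $|y_ay_b|=0$, not $1$). This sign of parity is not a cosmetic point: once you know $[y_a,y_b]$ is skew, each error term, namely $[y_a,y_b]\,z_1\,w\,z_2$ and $u\,z_1\cdots[y_c,y_d]\cdots z_2$, contains three skew factors separated by symmetric elements (or by $1$), and hence is a substitution instance of identity~1, $z_1y_1z_2y_2z_3$, multiplied by elements of the algebra; it therefore lies in $\mathcal I$ immediately. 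That one-line observation is the entire proof in the paper.

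Because you start from the wrong parity, you are driven to identity~7, $[x_1,x_2]z_5[x_3,x_4]$, and to an induction that "sandwiches" the commutator. That route does not go through as described: identity~7 requires the skew variable in the \emph{middle} and commutators on the \emph{outside}, whereas your error terms have the commutator sitting before $z_1$ or between $z_1$ and $z_2$; identities~3--6 do not convert a block of symmetric variables followed by $z_2$ into a commutator $[x_3,x_4]$, so the pattern of identity~7 is never reached. Likewise, "flanked by two skew elements forces zero" is only true because the flanked element is itself skew (three skew factors, identity~1 again), which is the fact you denied. Replace the appeal to identity~7 by the observation that $[y_a,y_b]$ is skew and invoke identity~1, and your argument collapses to the paper's short proof.
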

\begin{proof}
    Consider the case $y_{i_2}y_{i_1}z_1y_{j_2}y_{j_1}z_2$ and note that $[y_1,y_2] \in A^{-}$. Then, 
    \begin{align*}
        y_{i_2}y_{i_1}z_1y_{j_2}y_{j_1}z_2 & = y_{i_1}y_{i_2}z_1y_{j_2}y_{j_1}z_2 + [y_{i_2},y_{i_1}]z_1y_{j_2}y_{j_1}z_2  = y_{i_1}y_{i_2}z_1y_{j_2}y_{j_1}z_2 \\
        & =  y_{i_1}y_{i_2}z_1y_{j_1}y_{j_2}z_2 + y_{i_1}y_{i_2}z_1[y_{j_2},y_{j_1}]z_2  = y_{i_1}y_{i_2}z_1y_{j_1}y_{j_2}z_2.
    \end{align*}
It follows that  \[ 
        y_{\sigma(i_1)}y_{\sigma(i_2)}\cdots y_{\sigma(i_s)}z_1y_{\rho(j_1)}y_{\rho(j_2)}\cdots y_{\rho(j_t)}z_2 = y_{i_1}y_{i_2}\cdots y_{i_s}z_1y_{j_1}y_{j_2}\cdots y_{j_t}z_2,\] 
       for $\sigma \in S_s$ and $\rho \in S_t$.
\end{proof}

Let $P_{n,m}$ be the set of multilinear polynomials in $n$ symmetric variables and $m$ skew-symmetric variables. By Id. 1., $P_{n,m} = 0$ if $m>2$. Then, we consider the cases $m = 0$, $1$, $2$ separately.

\subsection{Case $m = 0$}

Let $\Gamma_{n,0}(\mathcal{I})$ be the subspace of the $Y$-proper multilinear polynomials in the variables $y_1$, \dots, $y_m$  of the relatively free algebra $F\langle Y,Z, * \rangle/\mathcal{I}$.

Since $[x_1, x_2][x_3, x_4][x_5, x_6]$ belongs to $\mathcal{I}$, the vector
space $\Gamma_{n,0}(\mathcal{I})$ is spanned  by the proper polynomials \[ [x_{j_1}, \dots , x_{j_s}] [x_{k_1}, \dots , x_{k_t}], \]
where $s$, $t \geq 0$, $s \neq 1$, $t \neq 1$,   $j_1 > j_2 < \cdots < j_s$, and $k_1 > k_2 < \cdots < k_t$.

Note that $A^{+} = UT_3(K)^{+}$. We use a result of \cite{di2006involutions}.

\begin{definition}
    A polynomial $f$ is called {\sl $S_3$-standard} if $f$ is either of type $[y_{j_1}, \dots , y_{j_n}]$ or $[y_{j_1}, \dots , y_{j_{n-2}}][y_{k_1}, y_{k_2}]$, where the commutator $[y_{i_1}, \dots , y_{i_s}]$ satisfies $i_1 > i_2 < \cdots < i_s$, and  if $f$ is of the second type we have that $j_1 >k_1$, ${j_2} >k_2$.
\end{definition}
The next result is a particular case of Proposition 5.8 and Lemma 6.2 of \cite{di2006involutions}.

\begin{proposition} \label{pro:base*_m=0}
The following statements hold:
    \begin{itemize}
   \item[$(i)$] The $S_3$-standard polynomials span the vector space $\Gamma_{n,0}(A)$.
        
        \item[$(ii)$] The $S_3$-standard polynomials in $\Gamma_{n,0}$ are linearly independent modulo the ideal $\Id^{*}(A)$.
    \end{itemize}
\end{proposition}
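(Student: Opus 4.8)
The plan is to leverage the equality $A^{+} = UT_3(K)^{+}$ recorded just above the statement. When $m=0$ all the variables $y_1,\dots,y_n$ are substituted by elements of $A^{+}$; hence a multilinear polynomial $f(y_1,\dots,y_n)$ in symmetric variables belongs to $\Id^{*}(A)$ precisely when it vanishes on every tuple from $A^{+}=UT_3(K)^{+}$, i.e.\ precisely when $f\in\Id^{*}(UT_3(K))$. Thus the purely symmetric multilinear $*$-identities of $A$ and of $UT_3(K)$ coincide, so $\Gamma_{n,0}(A)=\Gamma_{n,0}(UT_3(K))$, and both claims reduce to the corresponding statements for $UT_3(K)$ under the reflection involution treated in \cite{di2006involutions}. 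I would emphasise that this reduction is special to the symmetric regime: since $A^{-}\subsetneq UT_3(K)^{-}$ (the $(1,1)$ and $(3,3)$ entries are forced to vanish in $A^{-}$), the two algebras have genuinely different $*$-identities once skew variables enter, which is why the later cases $m=1,2$ must be handled separately.

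For $(i)$ I would begin from the spanning set obtained before the statement: because $[x_1,x_2][x_3,x_4][x_5,x_6]\in\mathcal I\subseteq\Id^{*}(A)$, the space $\Gamma_{n,0}(A)$ is already spanned by products of at most two left-normed commutators $[y_{j_1},\dots,y_{j_s}][y_{k_1},\dots,y_{k_t}]$ in standard form. What remains is to shorten the second factor to length exactly $2$ and then to arrange the order relations $j_1>k_1$ and $j_2>k_2$ in the definition of an $S_3$-standard polynomial. Both steps are performed with identities $5$ and $6$ of Theorem \ref{th:Base_*} together with the Jacobi identity, all valid on $A^{+}=UT_3(K)^{+}$; carried out on $UT_3(K)$ this is exactly Proposition 5.8 of \cite{di2006involutions}, and transporting it through the identification of the previous paragraph gives the asserted spanning.

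For $(ii)$, linear independence modulo $\Id^{*}(A)$ is, by the coincidence of purely symmetric $*$-identities from the first paragraph, equivalent to linear independence modulo $\Id^{*}(UT_3(K))$, which is Lemma 6.2 of \cite{di2006involutions}. This independence is the genuine content of the proposition: the spanning is little more than bookkeeping with the relations of $\mathcal I$, whereas independence must actually separate the standard polynomials. Were one to prove it directly instead of citing, the main obstacle would be to construct, for each hypothetical nontrivial relation, symmetric evaluations in $A^{+}=UT_3(K)^{+}$ — with the diagonal entries $d,g$ and the off-diagonal entry $a$ as free parameters — that isolate the coefficient attached to each $S_3$-standard tableau, in the spirit of the evaluations used earlier for $\mathcal{A}^1$; the conditions $j_1>k_1$ and $j_2>k_2$ are precisely what makes such a family of evaluations separating.
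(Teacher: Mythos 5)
Your proposal is correct and follows essentially the same route as the paper: the paper likewise observes that $A^{+} = UT_3(K)^{+}$ and then records the statement as a particular case of Proposition 5.8 and Lemma 6.2 of \cite{di2006involutions}. Your extra justification that the purely symmetric multilinear $*$-identities of $A$ and $UT_3(K)$ therefore coincide is a sensible elaboration of the same reduction, not a different argument.
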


\subsection{Case $m = 1$}
First, we describe a spanning set of  $ P_{n,1}(A)$.

\begin{proposition}\label{pr:spn*_m=1}
     $ P_{n,1}(A)$ is spanned by  elements of the form
\begin{itemize}
    \item[$(i)$] $ y_{i_1}y_{i_2}\cdots y_{i_s}zy_{i_{s+1}}y_{i_{s+2}}\cdots y_{i_n}$, with $i_1<\cdots<i_s$ and $i_{s+1} < \cdots < i_n$;

    \item[$(ii)$]  $ y_{i_1}y_{i_2}\cdots y_{i_s}zy_{i_{s+1}}y_{i_{s+2}}\cdots y_{i_{n-2}}[y_{i_{n-1}},y_{i_{n}}]$, with $i_1<\cdots<i_s$, $i_{s+1} < \cdots < i_{n-1}$ and $i_{n}<i_{n-1}$.
\end{itemize}
\end{proposition}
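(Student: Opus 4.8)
The plan is to take an arbitrary multilinear monomial $w \in P_{n,1}$, write it as $w = p\,z\,q$ where $p$ and $q$ are words in the symmetric variables, and rewrite it modulo $\Id^{*}(A)$ into a linear combination of the two displayed forms by a normal-form (bubble-sort) procedure. The facts I would isolate first are the following consequences of the defining identities, all readily checked on the matrix units (here $J$ denotes the radical, the strictly upper triangular matrices, so that $J^{2}=Ke_{1,3}$ and $J^{3}=0$). Since $[y_a,y_b]$ is skew-symmetric and any two skew elements commute (identity 2, $[z_1,z_2]=0$), every commutator of two symmetric variables commutes with $z$ and with every other such commutator. Next, a product of three skew factors vanishes: $[x_1,x_2]z[x_3,x_4]$ is identity 7, while $z[x_1,x_2][x_3,x_4]$ and $[x_1,x_2][x_3,x_4]z$ follow from identities 2 and 7 (commute $z$ past a commutator and apply 7). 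Hence, once the single variable $z$ is present, at most one length-two commutator of symmetric variables can survive in a nonzero monomial. Finally, the product of two skew factors lands in the central ideal $J^{2}=Ke_{1,3}$, which supplies the relations needed to reposition free symmetric variables and the surviving commutator relative to $z$.

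First I would sort $p$ and $q$ separately into increasing order using the trivial rewriting $y_ay_b = y_by_a + [y_a,y_b]$. The leading term is precisely a monomial of form $(i)$, and every correction term carries at least one length-two commutator. In each correction term I would then move the commutators to the far right: a commutator lying to the left of $z$ is pushed across $z$ by commutativity of skew elements, and any commutator of length $\ge 3$ produced along the way is expanded back into length-two ones via $[w,y_c]=wy_c-y_cw$. By the three-skew vanishing above, every term that still contains two or more commutators is zero, so I am left with a single trailing length-two commutator $[y_a,y_b]$. It then remains to reach the exact shape of $(ii)$: each free symmetric variable sitting to the right of the commutator is transported to the left of $z$, using that modulo $\Id^{*}(A)$ one has $z[y_a,y_b]y_c \equiv y_c z[y_a,y_b]$ (equivalently $[[y_a,y_b]z,y_c]\equiv 0$), while a variable trapped between $z$ and the commutator must stay there. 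Sorting the two resulting blocks of free variables and normalizing the commutator to $[y_{i_{n-1}},y_{i_n}]$ with $i_n<i_{n-1}$ yields form $(ii)$.

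The main obstacle is the bookkeeping that guarantees the reduction lands exactly in forms $(i)$ and $(ii)$ and nothing wider. The delicate point is that a free variable $y_c$ does \emph{not} commute with the surviving two-skew block: when $y_c$ lies strictly between $z$ and the commutator it contributes its $(2,2)$-entry, whereas outside the block it contributes its $(1,1)$-entry, so the two positions are genuinely inequivalent — and it is exactly this dichotomy that forces the split in $(ii)$ of the free variables into those before $z$ and those between $z$ and $[y_{i_{n-1}},y_{i_n}]$. I would therefore verify carefully that each repositioning move ($[y_a,y_b]z\equiv z[y_a,y_b]$, $[y_a,y_b]y_cz\equiv zy_c[y_a,y_b]$, $z[y_a,y_b]y_c\equiv y_cz[y_a,y_b]$, and their variants) is a genuine identity of $A$, and that these relations are in fact consequences of $(1)$--$(7)$, which is what the later use of this proposition in Theorem \ref{th:Base_*} requires. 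Finally, I would set up the whole procedure as an induction — say on the number of inversions of the symmetric variables together with the total commutator length — to ensure termination. The complementary statement, linear independence of the spanning elements obtained here, would be treated separately.
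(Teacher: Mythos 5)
Your overall strategy is the same as the paper's: reduce an arbitrary monomial $p\,z\,q$ to the two normal forms by introducing commutator correction terms while sorting, killing everything with two surviving commutators via identity 7, and repositioning the one surviving commutator and the free variables using identities 2, 3 and 4. Those repositioning moves ($[y_a,y_b]z\equiv z[y_a,y_b]$, $u\,w\,z\equiv z\,w\,u$ for $u,z$ skew and $w$ a word in the $y$'s, $z\,u\,y_c\equiv y_c\,z\,u$) are indeed identities of $A$ and consequences of 2--4, so that part is sound.

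There is, however, a genuine gap at the final step. Form $(ii)$ requires $i_{s+1}<\cdots<i_{n-2}<i_{n-1}$: every free variable trapped between $z$ and the commutator must have index \emph{smaller than the larger index of the commutator}. Sorting the middle block and normalizing the commutator to $[y_{i_{n-1}},y_{i_n}]$ with $i_n<i_{n-1}$ does \emph{not} produce this; e.g.\ $z\,y_3\,[y_2,y_1]$ is fully sorted and normalized yet is not of form $(ii)$. Such terms genuinely occur in your procedure: when a correction $p_1[y_a,y_b]p_2\,z\,q$ arising from sorting the left block is transported to $p_1q\,z\,p_2[y_a,y_b]$, the indices of $p_2$ bear no relation to $a$ (the paper points this out explicitly). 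This constraint is not cosmetic --- it is exactly what makes the spanning set of Proposition \ref{pr:base*_m=1} linearly independent, so without enforcing it you have proved spanning by a strictly larger set. The paper closes the loop by re-expanding $[y_a,y_b]$ as $y_ay_b-y_by_a$ in the offending terms and re-running the sort of the right-hand block, always taking the \emph{leftmost} inversion (so that the prefix to the left of the new commutator is automatically increasing and bounded by the larger commutator index); you need this extra recursion, or an equivalent Jacobi-type rewriting, plus a termination argument that covers it. A smaller point: your plan to expand commutators of length $\ge 3$ ``back into length-two ones via $[w,y_c]=wy_c-y_cw$'' is circular as stated, and $[[y_a,y_b],y_c]$ is symmetric rather than skew, so it does not commute with $z$ by identity 2; it is cleaner to avoid producing long commutators altogether by moving the block $[y_a,y_b]\,w\,z$ as a single unit via identities 3 and 4.
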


\begin{proof}
Let $ \omega \in P_{n,1}(A)$ be a monomial. Then, $\omega =  y_{i_1}y_{i_2}\cdots y_{i_s}zy_{j_1}y_{j_2}\cdots y_{j_t}$ with $s+t = n$. 

Suppose $\omega = \omega_1 z \omega_2$, there are two cases for $\omega_2$:

    Case 1. {\sl $\omega_2$ with indices in non-increasing order}. 
    
    Consider $\omega = \omega_1 z \omega_2$ with $\omega_2 = w_{2,1}y_{l_2}y_{l_1}w_{2,2}$ and $y_{l_2},y_{l_1}$ the first pair of variables such that $l_2 > l_1$. 

     We can rewrite $\omega_2$ as $ \omega_2 = w_{2,1}y_{l_1}y_{l_2}w_{2,2} +  w_{2,1}[y_{l_2},y_{l_1}]w_{2,2}$, then 
   \[
     \omega =  \omega_1 z  w_{2,1}y_{l_1}y_{l_2}w_{2,2} + \omega_1 z  w_{2,1}[y_{l_2},y_{l_1}]w_{2,2} 
= \omega_1 z \overline{\omega}_2 +  \omega_1 w_{2,2} z  w_{2,1}[y_{l_2},y_{l_1}].
\]
Note that by Proposition \ref{pr:order*_m=2}, in the element $\omega_1 w_{2,2} z  w_{2,1}[y_{l_2},y_{l_1}] $ we can reorder the variables of  $\omega_1 w_{2,2}$ and $ w_{2,1}$ (separately) with the condition that if $ w_{2,1} = y_{k_1}\dots y_{k_p}$, then $y_{k_1} < \cdots < y_{k_p} < y_{l_2}$ and $  y_{l_1} < y_{l_2}$. 

    We repeat the process for $\omega_1z\overline{\omega}_2$ until getting an ascending order in the indices of the $y$'s to the right side of $z$ in the element without commutator. 
    Thus, $\omega = \omega_1 z \omega_2$ can be written as \[ \omega = \omega_1 z  y_{h_1}\cdots y_{h_r} + \sum_{I,J} \alpha_{I,J} y_{i_1}\cdots y_{i_s} z y_{j_1}\cdots y_{j_{t-2}}[y_{j_{t-1}},y_{j_t}] \] with ${h_1}<\cdots <{h_r}$,  ${i_1}<\cdots < {i_s}$, ${j_1}<\cdots< y_{j_{t-1}}$, and $j_t < {j_{t-1}}$. 

If the indices of the variables of $\omega_1$ are in increasing order, then we have $\omega$ in the desired form.

Case 2. {\sl $\omega_2$ with indices in increasing order}. 

     Let $\omega = \omega_1 z \omega_2$ with $\omega_1 = w_{1,1}y_{l_2}y_{l_1}w_{1,2}$, and let $y_{l_2},y_{l_1}$ be the first pair of variables such that $l_2 > l_1$ and the indices of the variables of $\omega_2$ increase.

     We  rewrite $\omega_1$ as $ \omega_1 = w_{1,1}y_{l_1}y_{l_2}w_{1,2} +  w_{1,1}[y_{l_2},y_{l_1}]w_{1,2}$, then 
    \[
     \omega  =  w_{1,1}y_{l_1}y_{l_2}w_{1,2} z \omega_2   +  w_{1,1}[y_{l_2},y_{l_1}]w_{1,2} z \omega_2   = \overline{\omega}_1z\omega_2 +  w_{1,1}\omega_2 z w_{1,2} [y_{l_2},y_{l_1}].
    \]
We apply once again Proposition \ref{pr:order*_m=2} to the element $ w_{1,1}\omega_2 z w_{1,2} [y_{l_2},y_{l_1}] $. Thus we reorder the variables of  $w_{1,1}\omega_2$ and $  w_{1,2}$ (separately). Then, \[ \omega =  \overline{\omega}_1z\omega_2 + \varpi_1 z \varpi_2 [y_{l_2},y_{l_1}], \] with the indices of the variables of $\omega_2$, $ \varpi_1$ and $\varpi_2$ in increasing order, respectively. 

    Note that there is no  relation between the indices of the variables of $\varpi_2$ and those of the variables of $[y_{l_2},y_{l_1}]$. Take
    \[ \omega =  \overline{\omega}_1z\omega_2 + \varpi_1 z \varpi_2 y_{l_2}y_{l_1} -  \varpi_1 z \varpi_2 y_{l_1}y_{l_2}. \]
    For $ \varpi_1 z \varpi_2 y_{l_2}y_{l_1} $ and  $ \varpi_1 z \varpi_2 y_{l_1}y_{l_2}$ we have two options: 
    \begin{itemize}
        \item[$\bullet$] The indices of the variables of $\varpi_2 y_{l_2}y_{l_1}$ or $\varpi_2 y_{l_2}y_{l_1}$ are not in increasing order. In this case, we proceed as in Case 1 for $\varpi_1z\varpi_2 y_{l_2}y_{l_1}$ or $\varpi_1 z\varpi_2 y_{l_2}y_{l_1}$.

        \item[$\bullet$]    The indices of the variables of $\varpi_2 y_{l_2}y_{l_1}$ are in increasing order. In this case, we have the desired order for $ \varpi_1 z \varpi_2 y_{l_1}y_{l_2}$. 
    \end{itemize}
    Finally, if necessary, we repeat the process for $ \overline{\omega}_1 z \omega_2$. 

This process of rearranging the variables using cases 1. and 2., has finitely many steps, so eventually it ends.
\end{proof}

Now, we show that the polynomials of Proposition \ref{pr:spn*_m=1} are  linearly independent. 

\begin{proposition}\label{pr:base*_m=1}
     The polynomials of the form 
\begin{itemize}
    \item[$(I)$] $ y_{i_1}y_{i_2}\cdots y_{i_s}zy_{i_{s+1}}y_{i_{s+2}}\cdots y_{i_n}$, with $i_1<\cdots<i_s$ and $i_{s+1} < \cdots < i_n$;

    \item[$(II)$]  $ y_{i_1}y_{i_2}\cdots y_{i_s}zy_{i_{s+1}}y_{i_{s+2}}\cdots y_{i_{n-2}}[y_{i_{n-1}},y_{i_{n}}]$, with $i_1<\cdots<i_s$, $i_{s+1} < \cdots < i_{n-1}$ and $i_{n}<i_{n-1}$,
\end{itemize}
are linearly independent modulo $\Id^{*}(A)$.
\end{proposition}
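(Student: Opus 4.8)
The plan is to prove linear independence by evaluating a hypothetical dependence on generic elements of $A^{+}$ and $A^{-}$ and reading off matrix entries. Write each symmetric variable as a generic symmetric matrix $y_i \mapsto \lambda_i(e_{1,1}+e_{3,3}) + \mu_i e_{2,2} + a_i(e_{1,2}+e_{2,3})$, and the skew variable as $z \mapsto e_{1,2}-e_{2,3}$ (which spans $A^{-}$). A direct computation gives two facts I will rely on. First, for two such matrices $[y_p,y_q] = (\delta_p a_q - \delta_q a_p)(e_{1,2}-e_{2,3})$ with $\delta_i = \lambda_i - \mu_i$, so a commutator is again a multiple of the skew element and, in particular, vanishes whenever $a_p=a_q=0$. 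Second, both $z$ and any such commutator have zero diagonal part, while $(e_{1,1}+e_{3,3})$ acts as the identity on $\langle e_1,e_3\rangle$ and $e_{2,2}$ on $\langle e_2\rangle$.

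Suppose $\sum \alpha\,(\text{type }(I)) + \sum \beta\,(\text{type }(II)) \in \Id^{*}(A)$. First I would dispose of the type $(I)$ terms by specialising all $a_i=0$, i.e.\ substituting diagonal symmetric matrices. Then every commutator, hence every type $(II)$ monomial, evaluates to $0$, while a word $y_{i_1}\cdots y_{i_s}\,z\,y_{i_{s+1}}\cdots y_{i_n}$ of type $(I)$ with left index set $S=\{i_1,\dots,i_s\}$ has $(1,2)$-entry equal to $\prod_{i\in S}\lambda_i\prod_{i\notin S}\mu_i$. Reading the $(1,2)$-entry of the whole combination yields $\sum_S \alpha_S\prod_{i\in S}\lambda_i\prod_{i\notin S}\mu_i=0$ identically in the $\lambda_i,\mu_i$. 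Since distinct sets $S$ give distinct multilinear monomials, all $\alpha_S=0$, and we are reduced to a combination of type $(II)$ polynomials only.

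For the type $(II)$ part I would keep the $a_i$ free and read the $(1,3)$-entry. Because $z$ and the terminal commutator are the only factors with no diagonal part, each must supply exactly one of the two ``up'' steps needed to reach position $(1,3)$: $z$ supplies the $(1,2)$ step and the commutator the $(2,3)$ step. Hence a word with left set $L$, middle set $R'$ and commutator $[y_p,y_q]$ has $(1,3)$-entry $-(\delta_p a_q-\delta_q a_p)\prod_{i\in L}\lambda_i\prod_{i\in R'}\mu_i$. The resulting identity is linear in the $a_j$, so fixing $j$ and collecting the coefficient of $a_j$ gives, for each $j$, a polynomial identity in the $\lambda_i,\mu_i$ that must vanish.

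The crux, and the step I expect to be the main obstacle, is to extract the individual $\beta$'s from these identities. The difficulty is structural: the antisymmetry $\delta_p a_q-\delta_q a_p$ makes a single word spread over several $\lambda/\mu$-monomials, and conversely several normal-form words land on the same monomial, so no single evaluation isolates one coefficient. I would resolve this by a downward induction on the index $p=i_{n-1}$, which by the normal-form condition $i_{s+1}<\cdots<i_{n-1}$ is precisely the largest index of the block to the right of $z$. Concretely, in the coefficient of $a_j$ I would single out the pure $\lambda/\mu$-monomial whose $\mu$-set $S_\mu$ has maximum $p$: the normal-form inequality forces the only surviving contributions to be the one word with commutator $[y_p,y_j]$, right block $S_\mu\setminus\{p\}$ and left block the $\lambda$-set, together with words whose larger commutator index exceeds $p$ (already shown to have zero coefficient) and words with $p_w=j$ (excluded since $j<p=\max S_\mu$ is incompatible with their right block). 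This isolates one $\beta$ at a time, and running $p$ from $n$ down to $2$ forces all type $(II)$ coefficients to vanish, completing the proof.
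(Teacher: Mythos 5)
Your proof is correct, and it reaches the conclusion by a genuinely different mechanism than the paper's. Both arguments share the same first move: substitute diagonal symmetric elements to kill every commutator and isolate the type $(I)$ coefficients (your generic $\lambda_i,\mu_i$ substitution contains the paper's $e_{1,1}+e_{3,3}$ versus $e_{2,2}$ evaluation as a special case, and your observation that distinct left sets $S$ give distinct multilinear monomials $\prod_{S}\lambda_i\prod_{S^c}\mu_i$ is a clean way to get all $\alpha_S=0$ at once, where the paper instead fixes one monomial and checks which others survive a $0/1$ evaluation). The divergence is in the type $(II)$ part. The paper picks an extremal surviving monomial (maximal number of variables between $z$ and the commutator, then minimal $i_{n-1}$) and designs a single matrix-unit substitution ($E$, $e_{2,2}$, $e_{1,2}\pm e_{2,3}$) under which only that monomial survives. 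You instead evaluate generically, read the $(1,3)$-entry as a polynomial in $\lambda_i,\mu_i,a_i$, and run a downward induction on the larger commutator index $p=i_{n-1}$; your path analysis (since $z$ and the commutator are the only factors without diagonal part, $z$ must supply the $1\to 2$ step and the commutator the $2\to 3$ step) correctly gives the entry $-(\delta_p a_q-\delta_q a_p)\prod_{L}\lambda_i\prod_{R'}\mu_i$, and your case analysis of which words hit the monomial with $\mu$-set $S_\mu$ in the coefficient of $a_j$ is right: the normal-form inequality $i_{s+1}<\cdots<i_{n-1}$ forces $\max S_\mu=p$ to come from the commutator, excludes the words with $p_w=j$, and leaves only the target word plus words with larger commutator index, which the induction has already annihilated. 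The paper's route is shorter once the right extremal monomial and substitution are guessed; yours is more systematic and makes the bookkeeping transparent, at the cost of the explicit expansion of $\delta_p a_q-\delta_q a_p$ into $\lambda/\mu$-monomials.
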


\begin{proof}
    Let $f$ be a linear combination of elements of the form $(I)$ and $(II)$, such that $f$ is a $*$-polynomial identity of $A$. 

    Suppose that not  all elements of $f$ of the form $(I)$ have non-zero coefficients, and consider $m =  y_{l_1}\cdots y_{l_s}zy_{l_{s+1}}\cdots y_{l_n}$.  
    Perform the evaluation \begin{equation} \label{eq:ev1_m=1}
         y_{l_1} = \cdots = y_{l_s} = e_{1,1} + e_{3,3}, \quad z = e_{1,2} - e_{2,3}, \quad y_{l_{s+1}} = \cdots = y_{l_n} = e_{2,2}.
    \end{equation}
    Since $e_{1,1} + e_{3,3}$ and $e_{2,2}$ commute, the elements with commutator in $f$ vanish.
    
    Now, suppose there exists $\overline{m} =  y_{j_1}\cdots y_{j_t}zy_{j_{t+1}}\cdots y_{j_n} $ of the form $(I)$ such that $\overline{m}$ is non-zero under the evaluation (\ref{eq:ev1_m=1}). Then, in $\overline{m}$ the $e_{2,2}$'s can only be substituted on one side of $z$ and similarly for the $(e_{1,1} + e_{3,3})$'s. Then we have two possibilities: $\{ j_1, \dots, j_t \} = \{ l_1, \dots l_s\} $ and  $\{ j_{t+1}, \dots, j_n \} = \{ l_{s+1}, \dots l_n\}$, or $\{ j_1, \dots, j_t \} =  \{ l_{s+1}, \dots l_n\}   $, and  $\{ j_{t+1}, \dots, j_n \} = \{ l_1, \dots l_s\} $. In the first case $m = \overline{m}$ and  $m$ has zero coefficient in $f$.  In the second case the evaluations of $m$ and $\overline{m}$ are $e_{1,2}$ and $e_{2,3}$, respectively. Therefore, $m$ and $\overline{m}$ have coefficient zero.

Suppose  not all  elements with one commutator have zero coefficient, and consider \[ m =  y_{i_1}y_{i_2}\cdots y_{i_k}zy_{i_{k+1}}y_{i_{k+2}}\cdots y_{i_{n-2}}[y_{i_{n-1}},y_{i_{n}}]  \] with the following properties:
    \begin{itemize}
        \item[$(i)$] its coefficient is non-zero,
        \item[$(ii)$] the number $n-k-2$ of variables between $z$ and the commutator is the largest among all elements with non-zero coefficient,
        \item[$(iii)$] $i_{n-1}$ is the least of all elements such that the properties $(i)$ and $(ii)$ hold.  
    \end{itemize}
    Consider the evaluation \begin{equation} \label{eq:ev2_m=1}
         y_{i_1} = \cdots = y_{i_k} = E,  z = e_{1,2} - e_{2,3},  y_{i_{k+1}} = \cdots = y_{i_{n-1}} = e_{2,2},  y_{i_n} = e_{1,2}+e_{2,3}.
    \end{equation}
    Let $\overline{m} = y_{j_1}\cdots y_{j_s}zy_{j_{s+1}}\cdots y_{j_{n-2}}[y_{j_{n-1}},y_{j_{n}}]  $ be a polynomial of the form $(II)$, such that  $\overline{m}$ does not vanish under the substitution (\ref{eq:ev2_m=1}). We look for the conditions   $\overline{m}$ must satisfy.

    Since $E$ and $e_{2,2}$ commute, the elements to be substituted in the commutator will be $e_{1,2} + e_{2,3} $ and $e_{2,2}$. Also  $e_{2,2}$  can only be substituted on the right side of $z = (e_{1,2}-e_{2,3})$, and since $n-k-2 \geq n-s-2$, then the $E$'s only can be substituted on the left side of $z$.  Then $s = k$  and $\{ j_1, \dots, j_s \} = \{ i_1, \dots i_k\} $. 

If $j_{n-1} \notin \{i_{n-1},i_{n}\}$ then $j_{n-1} \in \{ i_{k+1}, \dots ,i_{n-2} \}$, then $j_{n-1}<i_{n-1}$, but this contradicts the hypothesis of minimality of $i_{n-1}$.   So  $j_{n-1} \in \{i_{n-1},i_{n}\}$, and since $i_n < i_{n-1}$ we have $j_{n-1} = i_{n-1}$ and $j_n = i_n$. We conclude  $\{ i_{k+1}, \dots, i_{n_2} \} = \{ j_{k+1}, \dots, j_{n_2} \} $, $m = \overline{m}$ and the coefficient of $m$ is zero.    
\end{proof}

\subsection{Case $m = 2$}
By Proposition \ref{pr:order*_m=2} and the identities $3.$ and $4.$, $P_{n,2}(A)$ is spanned by monomials of the form  \[ y_{i_1}y_{i_2}\cdots y_{i_s}z_1y_{j_1}y_{j_2}\cdots y_{j_t}z_2 \] with $i_1<i_2\cdots<i_s$, $j_1<j_2\cdots<j_t$ and $s+t = n$.

\begin{proposition} \label{pro:base*_m=2}
    The polynomials 
        \begin{equation} \label{eq:*_m=2}
        y_{i_1}y_{i_2}\cdots y_{i_s}z_1y_{j_1}y_{j_2}\cdots y_{j_t}z_2 \end{equation}
        with $i_1<i_2\cdots<i_s$, $j_1<j_2\cdots<j_t$ and $s+t = n$, are linearly independent modulo $\Id(A)$. 
\end{proposition}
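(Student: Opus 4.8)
The plan is to prove linear independence by exhibiting, for each admissible monomial, a homogeneous $*$-substitution that isolates its coefficient. Since the $s+t=n$ symmetric variables are distributed into the two slots determined by $z_1$ and $z_2$, each basis monomial in (\ref{eq:*_m=2}) is determined by the subset $S \subseteq \{1,\dots,n\}$ of indices occurring to the left of $z_1$ (the complement $\bar{S}$ being the indices between $z_1$ and $z_2$); I write $w_S$ for this monomial, so that the ordering within each block is irrelevant to the value of any substitution that assigns equal matrices to all indices of a block. Put $P = e_{1,1}+e_{3,3}$, $Q = e_{2,2}$ and $N = e_{1,2}-e_{2,3}$, and note that $P, Q \in A^{+}$ while $N \in A^{-}$, so these are legitimate images for symmetric and skew variables, respectively.

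For a fixed target set $S_0$ I would use the substitution $\phi_{S_0}$ defined by $y_i \mapsto P$ for $i \in S_0$, $y_i \mapsto Q$ for $i \notin S_0$, and $z_1, z_2 \mapsto N$. The crucial structural facts are that $P$ and $Q$ are orthogonal idempotents ($P^2=P$, $Q^2=Q$, $PQ=QP=0$), so the image $\prod_{i\in S}\phi_{S_0}(y_i)$ of a block is nonzero only when all its factors agree: it equals $P$ if $\emptyset \neq S \subseteq S_0$, equals $Q$ if $\emptyset \neq S \subseteq \bar{S_0}$, and equals the identity $E$ if $S=\emptyset$. Combined with the short multiplication list $PN=e_{1,2}$, $QN=-e_{2,3}$, $NP=-e_{2,3}$, $NQ=e_{1,2}$, $e_{1,2}Q=e_{1,2}$, $e_{1,2}P=0$, $e_{1,2}N = -e_{1,3}$ and $NN=-e_{1,3}$, a short case check over the possible types of the two blocks shows that $\phi_{S_0}(w_S) = -e_{1,3}$ if $S=S_0$ and $\phi_{S_0}(w_S)=0$ otherwise; that is, $\phi_{S_0}(w_S)=-\delta_{S,S_0}\,e_{1,3}$.

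Granting this formula, the conclusion is immediate: if $f=\sum_S \alpha_S w_S$ lies in $\Id^{*}(A)$, then applying $\phi_{S_0}$ gives $-\alpha_{S_0}e_{1,3}=0$, whence $\alpha_{S_0}=0$; as $S_0$ was arbitrary, all coefficients vanish and the monomials (\ref{eq:*_m=2}) are linearly independent modulo $\Id^{*}(A)$.

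The main obstacle is that $A^{-}$ is one-dimensional, spanned by $N=e_{1,2}-e_{2,3}$, so unlike the graded computations of Propositions \ref{pro:A_1n=1} and \ref{pro:A_1n=2}, where one could assign $e_{1,2}$ and $e_{2,3}$ to separate skew slots, here both $z_1$ and $z_2$ are forced onto the \emph{same} element $N$. The device resolving this is to let the orthogonal idempotents $P$ and $Q$ in the $y$-slots steer the product: the demand that each block be pure in $P$ or pure in $Q$ (else it annihilates) forces the partition of $\{1,\dots,n\}$ to coincide with $(S_0,\bar{S_0})$, after which the two copies of $N$ automatically route the product from entry $(1,1)$ to $(1,3)$. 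The only point needing separate verification is the extreme cases $S_0=\emptyset$ and $S_0=\{1,\dots,n\}$, where one block is empty and the identity $E$ appears; one checks that the surviving combinations produce exactly $-e_{1,3}$ as well, so that $\phi_{S_0}(w_S)=-\delta_{S,S_0}\,e_{1,3}$ holds uniformly.
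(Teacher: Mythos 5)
Your proof is correct and follows essentially the same route as the paper: both arguments evaluate the $y$'s at diagonal idempotents and send both $z_1$ and $z_2$ to $e_{1,2}-e_{2,3}$ (forced, since $A^{-}$ is one-dimensional), then read off the coefficients. The only difference is that you use the orthogonal idempotents $e_{1,1}+e_{3,3}$ and $e_{2,2}$ so that each coefficient is isolated outright via $\phi_{S_0}(w_S)=-\delta_{S,S_0}\,e_{1,3}$, whereas the paper substitutes the identity matrix $E$ for the left block and therefore needs an extremal choice (a nonzero coefficient with $|J'|$ maximal) to single out one surviving term.
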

\begin{proof}
    Suppose that \[ f = \sum_{I,J}\alpha_{I,J} y_{i_1}y_{i_2}\cdots y_{i_s}z_1y_{j_1}y_{j_2}\cdots y_{j_t}z_2\] is a $*$-identity of $A$. Choose  $\alpha_{I',J'} \neq 0$, such that $|J'|=t'$ is the largest possible. Now, consider the substitution $y_{i_1} = \cdots = y_{i_{s'}} = E$, $y_{j_1} = \cdots = y_{j_{t'}} = e_{2,2}$ and $z_1 = z_2 = (e_{1,2} - e_{2,3})$. Considering the order of the indices and that $|J'|$ is the largest, the only non-zero term of $f$ after the substitution will be the one with coefficient $\alpha_{I',J'}$. Hence $\alpha_{I',J'} = 0$, a contradiction.  Thus, the polynomials in Equation (\ref{eq:*_m=2}) are linearly independent. 
\end{proof}

Finally, by Proposition \ref{pro:base*_m=0}, Proposition \ref{pr:spn*_m=1}, Proposition \ref{pr:base*_m=1} and Proposition  \ref{pro:base*_m=2}  we have that Theorem \ref{th:Base_*} holds.

\section{Graded involution}

In this section we consider $A$ as a graded algebra with the grading as in (\ref{eq:gra1}), and the involution from Section \ref{InvotionOnA}. Then $*$ is a graded involution and \[ A_{0}^{+} = A_0, \qquad A_{0}^{-} = \{0\}, \qquad A_{1}^{+} = K(e_{1,2} + e_{2,3}), \qquad A_{1}^{-} = K(e_{1,2} - e_{2,3}).\]

\subsection{Graded $*$-polynomial identities of $A$}

Consider the free algebra with involution $K\langle Y \cup  Z, * \rangle$, generated by symmetric and skew elements of even and odd degree, that is \[ K\langle Y \cup  Z, * \rangle = K \langle y_{1}^{+}, y_{1}^{-}, z_{1}^{+}, z_{1}^{-}, y_{2}^{+}, y_{2}^{-}, z_{2}^{+}, z_{2}^{-}, \dots \rangle \]
where $y_{i}^{+}$ stands for a symmetric variable of even degree, $y_{i}^{-}$ for a skew variable of even degree, $z_{i}^{+}$ for a
symmetric variable of odd degree and $z_{i}^{-}$ for a skew variable of odd degree.

The following theorem gives us a basis of $\Id_{\mathbb{Z}_2}^{*}(A)$.

\begin{theorem} \label{th:Base_gr_*}
    The $T_2^{*}$-ideal $\Id_{\mathbb{Z}_2}^{*}(A)$ is generated, as a $T_2^{*}$-ideal, by the following polynomials:
    
\begin{tabular}{lll} 
    1. $y^{-}$ & 2. $[y_{1}^{+},y_{2}^{+}]$ & 3. $[z_{1}^{+},z_{2}^{+}]$       \\
    4. $[z_{1}^{-},z_{2}^{-}]$  &  5. $z_{1}^{+} \circ z_{1}^{-}$ & 6. $[y^{+},z_{1}z_{2}]$ \\
    7. $[y_{1}^{+},z_{1}y_{2}^{+}z_{2}]$ &  8.  $z_1 z_2 z_3$ & 9.  $ z_{1}^{+}y^{+}z_{2}^{+} - z_{2}^{+}y^{+}z_{1}^{+}$ \\
    10. $ z_{1}^{-}y^{+}z_{2}^{-} - z_{2}^{-}y^{+}z_{1}^{-}$ & 11.  $ z_{1}^{-}y^{+}z_{2}^{+} + z_{2}^{+}y^{+}z_{1}^{-}$ & 
\end{tabular}
\end{theorem}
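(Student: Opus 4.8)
Write $\mathcal{I}$ for the $T_2^{*}$-ideal generated by the eleven polynomials $1$--$11$ of Theorem~\ref{th:Base_gr_*}. The plan is to prove the two inclusions $\mathcal{I}\subseteq \Id_{\mathbb{Z}_2}^{*}(A)$ and $\Id_{\mathbb{Z}_2}^{*}(A)\subseteq \mathcal{I}$. The first is a direct verification using $A_0^{+}=A_0$, $A_0^{-}=0$, $A_1^{+}=K(e_{1,2}+e_{2,3})$ and $A_1^{-}=K(e_{1,2}-e_{2,3})$: one substitutes homogeneous elements of the prescribed symmetry into each polynomial and checks that it vanishes. The only computations of substance are the products $z^{+}z^{+}=-z^{-}z^{-}=e_{1,3}$, $z^{-}z^{+}=-z^{+}z^{-}=e_{1,3}$, together with $(e_{1,1}+e_{3,3})z^{\pm}=e_{1,2}$, $e_{2,2}z^{\pm}=\pm e_{2,3}$ and their reflections, which I would record once and reuse. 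In particular identity~$1$ records $A_0^{-}=0$, identity~$8$ records $A_1A_1A_1=0$, and identity~$5$ records $z^{+}z^{-}+z^{-}z^{+}=0$.

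Since $K$ has characteristic zero, for the reverse inclusion it suffices to work inside the multilinear spaces $P_{n_1,n_2,n_3,n_4}$. Identity~$1$ forces $P_{n_1,n_2,n_3,n_4}\subseteq\mathcal{I}$ whenever $n_2\ge 1$, and identity~$8$ forces the same whenever $n_3+n_4\ge 3$. Hence only the finitely many shapes with $n_2=0$ and $n_3+n_4\in\{0,1,2\}$ survive, and I would treat them one by one. For each such shape the strategy is the standard one: first produce, modulo $\mathcal{I}$, an explicit spanning set of canonical monomials, and then show these monomials are linearly independent modulo $\Id_{\mathbb{Z}_2}^{*}(A)$ by matrix-unit evaluations that separate them.

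For the spanning step I would first prove a graded analogue of Proposition~\ref{pr:order*_m=2}: by identity~$2$ the symmetric even variables on each side of a $z$ may be sorted, and by identities~$6$ and~$7$ (and their higher analogues, see below) a symmetric even variable lying to the right of a block $z\,(y^{+}\cdots y^{+})\,z$ can be pushed to its left. Iterating, every monomial with $n_3+n_4\le 2$ reduces modulo $\mathcal{I}$ to the form $y_{i_1}^{+}\cdots y_{i_s}^{+}\,z_a\,y_{j_1}^{+}\cdots y_{j_t}^{+}\,z_b$ with $i_1<\cdots<i_s$, $j_1<\cdots<j_t$ and no even variable to the right of the last odd variable. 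It then remains to fix the order of the two odd variables: identities~$3$ and~$9$ transpose two $z^{+}$'s, identities~$4$ and~$10$ transpose two $z^{-}$'s, while identities~$5$ and~$11$ transpose a $z^{+}$ and a $z^{-}$ at the cost of a sign. The case $n_3+n_4=1$ is handled exactly as in the $\mathcal{A}^{1}$ computation leading to Theorem~\ref{th:A_1}. In this way each shape acquires a canonical spanning set indexed by the splitting of the even variables into the two sides of $z_a$, with the odd pair in a fixed order.

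The independence step is then a sequence of evaluations parallel to Propositions~\ref{pr:base*_m=1} and~\ref{pro:base*_m=2}: taking $y^{+}\mapsto e_{1,1}+e_{3,3}$ for the variables one wants on the left of $z_a$ and $y^{+}\mapsto e_{2,2}$ for those on the right, together with $z^{+}\mapsto e_{1,2}+e_{2,3}$ and $z^{-}\mapsto e_{1,2}-e_{2,3}$, a prescribed canonical monomial survives while all others vanish, the products recorded above distinguishing the three $n_3+n_4=2$ subcases by their signs. I expect the main obstacle to be the spanning reduction for $n_3+n_4=2$ rather than the independence: one must establish the generalized transposition rules $z^{\mp}(y^{+}\cdots y^{+})z^{\pm}=\pm\,z^{\pm}(y^{+}\cdots y^{+})z^{\mp}$ and the generalized commutation $[y^{+},\,z_a(y^{+}\cdots y^{+})z_b]=0$ as consequences of identities~$5$--$11$ by induction on the number of interior even variables, keeping the signs from identity~$11$ consistent. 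Once the canonical sets are shown to be independent modulo $\Id_{\mathbb{Z}_2}^{*}(A)$, the inclusion $\mathcal{I}\subseteq\Id_{\mathbb{Z}_2}^{*}(A)$ forces $\mathcal{I}\cap P_{n_1,n_2,n_3,n_4}=\Id_{\mathbb{Z}_2}^{*}(A)\cap P_{n_1,n_2,n_3,n_4}$ in every shape, and hence $\Id_{\mathbb{Z}_2}^{*}(A)=\mathcal{I}$.
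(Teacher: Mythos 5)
Your proposal is correct and follows essentially the same route as the paper: verify that polynomials 1--11 are graded $*$-identities, use 1 and 8 to annihilate every multilinear shape with $n_2\ge 1$ or $n_3+n_4\ge 3$, reduce the surviving shapes modulo $\mathcal{I}$ to the canonical monomials $y_{i_1}^{+}\cdots y_{i_s}^{+}\,z_a\,y_{j_1}^{+}\cdots y_{j_t}^{+}\,z_b$ with sorted indices, and separate them by the evaluations $y^{+}\mapsto e_{1,1}+e_{3,3}$ or $e_{2,2}$ and $z^{\pm}\mapsto e_{1,2}\pm e_{2,3}$. The only quibbles are that your displayed transposition rule should read $z^{\mp}(y^{+}\cdots y^{+})z^{\pm}=-\,z^{\pm}(y^{+}\cdots y^{+})z^{\mp}$ (identities 5 and 11 always produce a minus sign when swapping a mixed pair, consistently in both directions), and that the induction on the number of interior even variables which you rightly flag as the delicate part of the spanning step is indeed left implicit in the paper's own proof.
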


\begin{proof}
   One easily sees that the polynomials 1. -- 11. are graded $*$-identities. Let $P_{n_1,n_2,n_3,n_4}(A)$ be the set of multilinear graded $*$-polynomials modulo $\Id_{\mathbb{Z}_2}^{*}(A)$ in $n_1$ symmetric variables of degree $0$, $n_2$ skew variables of degree $0$,  $n_3$ symmetric variables of degree $1$ and $n_1$ skew variables of degree $1$. From identities 1. and 8., we have that $P_{n_1,n_2,n_3,n_4}(A) = \{ 0 \}$ if $n_2>0$ or $n_3 + n_4 >2$. 

    \begin{enumerate} 
        \item[(i)] {\sl Case $n_3 + n_4 = 0$.} From identity 2., it is easy to see that \[  P_{n_1,0,0,0}(A) = \Span\{ y_1^{+}y_2^{+}\cdots y_{n_1}^{+} \}. \]
        \item[(ii)] {\sl Case $n_3 + n_4 = 1$.} We consider the case $n_3 = 1$ (the case $n_4 = 1$ is analogous). Again, by identity 2., we see that  \begin{equation} \label{eq:*_gr_z-1}
            P_{n_1,0,1,0}(A) = \Span\{ y_{i_1}^{+} \cdots y_{i_s}^{+}z^{+}y_{j_1}^{+}\cdots y_{j_t}^{+} \}, 
        \end{equation}  with $s+t = n_1$, $i_1<\dots<i_s$ and $j_1 < \cdots< j_t$.  

But the monomials $y_{i_1}^{+} \cdots y_{i_s}^{+}z^{+}y_{j_1}^{+}\cdots y_{j_t}^{+}$ are linearly independent. Indeed, let \[ f = \sum_{I,J}\alpha_{I,J}y_{i_1}^{+} \cdots y_{i_s}^{+}z^{+}y_{j_1}^{+}\cdots y_{j_t}^{+}.\]
        Suppose $f\equiv 0$ modulo $Id_{\mathbb{Z}_2}^{*}(A)$ and that there are   sets of indices $I_0$ and $J_0$, such that $\alpha_{I_0,J_0} \neq 0$. Considering the evaluation $y_i^{+} = e_{1,1} + e_{3,3}$, $y_{j}^{+} = e_{2,2}$, $z^{+} = e_{1,2} + e_{2,3}$, for $i \in I_0$, $j \in J_0$, we obtain $\alpha_{I_0,J_0}e_{1,2} = 0$. So, $\alpha_{I_0,J_0} = 0$, a contradiction. Then,  the polynomials in (\ref{eq:*_gr_z-1}) are linearly independent. 

         \item[(iii)] {\sl Case $n_3 + n_4 = 2$.} Consider the case $n_3 = 2$ (the cases $n_4 = 2 $ and $n_3 = n_4 = 1$  are analogous). From the identities 2., 3., 6., 7. and 9., we conclude  
         \begin{equation} \label{eq:*_gr_z-2}
            P_{n_1,0,2,0}(A) = \Span\{ y_{i_1}^{+} \cdots y_{i_s}^{+}z_{1}^{+}y_{j_1}^{+}\cdots y_{j_t}^{+}z_{2}^{+} \}, 
        \end{equation}  with $s+t = n_1$, $i_1<\dots<i_s$ and $j_1 < \dots j_t$. 

        In order to see that the polynomials in (\ref{eq:*_gr_z-2}) are linearly independent modulo $Id_{\mathbb{Z}_2}^{*}(A)$, suppose that \[ \sum_{I,J}\alpha_{I,J}y_{i_1}^{+} \cdots y_{i_s}^{+}z_{1}^{+}y_{j_1}^{+}\cdots y_{j_t}^{+}z_{2}^{+} = 0 \qquad \mod \Id_{\mathbb{Z}_2}^{*}(A)  \]
        and that there are  index sets $I_0$ and $J_0$, such that $\alpha_{I_0,J_0} \neq 0$. Considering the evaluation $y_i^{+} = e_{1,1} + e_{3,3}$, $y_{j}^{+} = e_{2,2}$, $z_{l}^{+} = e_{1,2} + e_{2,3}$, for $i \in I_0$, $j \in J_0$, we obtain $\alpha_{I_0,J_0}e_{2,3} = 0$. So, $\alpha_{I_0,J_0} = 0$, a contradiction. Then,  the polynomials in (\ref{eq:*_gr_z-2}) are linearly independent.         
    \end{enumerate}
    Therefore, from items (i), (ii) and (iii), we conclude that the set of polynomials 1.-11. determines a basis for the graded $*$-identities of the algebra $A$.
\end{proof}

\subsection{Cocharacters of $(A,gr,*)$}

Let $\langle \lambda \rangle = (\lambda(1), \lambda(2), \lambda(3), \lambda(4))$ be a multipartition of $(n_1,n_2,n_3,n_4)$, i.e. $\lambda(i)\vdash n_i$. We are interested in computing the $(n_1 , \dots, n_4 )$-th cocharacter of $(A,gr,*)$,

\begin{equation}\label{eq:cocharacter_A_gr_*}
    \chi_{n_1,n_2,n_3,n_4}(A) = \sum_{\langle \lambda \rangle \vdash (n_1,n_2,n_3,n_4)} m_{\langle \lambda \rangle} \chi_{\lambda(1)}\otimes \cdots  \otimes \chi_{\lambda(4)}.
\end{equation}
Since $\dim (\mathcal{A}^1)^{+} = \dim (\mathcal{A}^1)^{-} = 1$, then $m_{\langle \lambda \rangle} = 0$ if $h(\lambda(i))>1$ for $i = 3$, $4$. Also, by the  graded $*$-identities of $A$, we have  $m_{\langle \lambda \rangle} = 0$ if $h(\lambda(1))>2$.

In the following results, we consider only the case $h(\lambda(i))\leq 1$ for $i = 3$, $4$. 
First, we consider the case of  even degree variables only or  odd degree variables only.

\begin{proposition}
        If either $\langle \lambda \rangle = ((n_1), \emptyset, \emptyset, \emptyset)$ with $n_1 >0$ or $\langle \lambda \rangle = (\emptyset, \emptyset, (n_3), (n_4))$ with $ 0 < n_3 + n_4  \leq 2$, then $m_{\langle \lambda \rangle} = 1$ in (\ref{eq:cocharacter_A_gr_*}).
\end{proposition}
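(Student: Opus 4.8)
The plan is to exploit the fact that each multipartition in the statement has all of its nonempty components equal to a single row, so that the associated highest weight vector is obtained by complete symmetrization; concretely, it is represented by a product of powers of a single homogeneous variable of each prescribed type. By the definition of $m_{\langle\lambda\rangle}$ as the number of linearly independent highest weight vectors (of the given shape) modulo $\Id_{\mathbb{Z}_2}^{*}(A)$, it then suffices to check, in each case, that the ambient space of admissible monomials is one-dimensional modulo the $T_2^{*}$-ideal and that the distinguished monomial does not vanish in $A$. One-dimensionality I would read off directly from the spanning sets already obtained in the proof of Theorem \ref{th:Base_gr_*}.

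For $\langle\lambda\rangle = ((n_1),\emptyset,\emptyset,\emptyset)$, item (i) of that proof gives $P_{n_1,0,0,0}(A) = \Span\{y_1^{+}\cdots y_{n_1}^{+}\}$, a one-dimensional space on which $S_{n_1}$ acts trivially, since $[y_1^{+},y_2^{+}]$ is an identity and the product is therefore independent of the order of the factors; hence this module is the irreducible $\chi_{(n_1)}$. The single highest weight vector $(y^{+})^{n_1}$ is nonzero modulo the identities, because the evaluation $y^{+}=E$ yields $E\neq 0$. Therefore $m_{\langle\lambda\rangle}=1$.

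For $\langle\lambda\rangle = (\emptyset,\emptyset,(n_3),(n_4))$ with $0<n_3+n_4\le 2$ there are five subcases, according to $(n_3,n_4)\in\{(1,0),(0,1),(2,0),(0,2),(1,1)\}$. In each I would write the highest weight vector as the relevant product in a single symmetric odd variable $z^{+}$ and a single skew odd variable $z^{-}$, namely $z^{+}$, $z^{-}$, $(z^{+})^2$, $(z^{-})^2$, and $z^{+}z^{-}$, respectively. Using identities 3, 4 and 5 of Theorem \ref{th:Base_gr_*} (the commutators $[z_1^{+},z_2^{+}]$, $[z_1^{-},z_2^{-}]$ and the anticommutator $z_1^{+}\circ z_1^{-}$) one collapses the two possible orderings of a degree-two product into a single monomial up to sign, so each ambient space is one-dimensional and the $S_{n_3}\times S_{n_4}$-action is the one-dimensional character matching $\chi_{(n_3)}\otimes\chi_{(n_4)}$. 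The crux, and the step most likely to fail were the statement false, is verifying non-vanishing: a direct computation with $z^{+}=e_{1,2}+e_{2,3}$ and $z^{-}=e_{1,2}-e_{2,3}$ gives $(z^{+})^2 = e_{1,3}$, $(z^{-})^2 = -e_{1,3}$ and $z^{+}z^{-} = -e_{1,3}$, all nonzero, while $z^{+}$ and $z^{-}$ are themselves nonzero. This rules out $m_{\langle\lambda\rangle}=0$ and, combined with one-dimensionality, yields $m_{\langle\lambda\rangle}=1$ in every subcase.
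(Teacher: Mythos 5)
Your argument is correct and follows essentially the same route as the paper: exhibit the highest weight vectors $(y^{+})^{n_1}$ and $(z^{+})^{n_3}(z^{-})^{n_4}$, observe that the identities of Theorem \ref{th:Base_gr_*} force the space of highest weight vectors of each shape to be one-dimensional, and check non-vanishing by an explicit evaluation. Your version merely spells out the evaluations ($y^{+}=E$, $z^{+}=e_{1,2}+e_{2,3}$, $z^{-}=e_{1,2}-e_{2,3}$) that the paper leaves implicit.
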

\begin{proof}
    Let $\langle \lambda_1 \rangle = ((n_1), \emptyset, \emptyset, \emptyset)$ and $\langle \lambda_2 \rangle = (\emptyset, \emptyset, (n_3), (n_4))$ as in the statement. Then $\omega_1 = (y_1^{+})^{n_1}$ and $\omega_2 = (z_{1}^{+})^{n_3}(z_{1}^{-})^{n_4}$ are highest weight vectors corresponding to the multipartitions $\langle \lambda_1 \rangle $ and $\langle \lambda_2 \rangle$ respectively. Since  $\omega_1$ and $\omega_2$ are not polynomials identities of $A$ then $m_{\langle \lambda_i \rangle} \geq 1$. By the identities in Theorem \ref{th:Base_gr_*} we conclude that $\omega_1$ and $\omega_2$ are the only (up to a scalar) highest weight vectors corresponding to $\langle \lambda_1 \rangle $ and $\langle \lambda_2 \rangle$. Therefore, $m_{\langle \lambda_i \rangle} = 1$ for $i=1$, $2$. 
\end{proof}

Before dealing with the case $n_3 + n_4 = 1$, we state a technical result (similar results can be found in \cite{cirrito2014ordinary,giambruno2019superalgebras}). 

\begin{proposition} \label{pr:equation}
    Modulo  $\Id_{\mathbb{Z}_2}^{*}(A)$, the following equality holds:
    \begin{equation} \label{eq:equation}
    \begin{split}
      &  \underbrace{\overline{y}_{1}^{+} \cdots \widetilde{y}_{1}^{+} }_p (y_{1}^{+})^{i_1-p} z^{+}  (y_{1}^{+})^{i_2-p} \underbrace{ \overline{y}_{2}^{+} \cdots \widetilde{y}_{2}^{+} }_p  \\       
        = & \sum_{j=0}^{p} (-1)^{j} \binom{p}{j} (y_{1}^{+})^{i_1-j} (y_{2}^{+})^{j} z^{+} (y_{1}^{+})^{i_2-p+j} (y_{2}^{+})^{p-j},
    \end{split}
     \end{equation}
    where $i_1,$ $i_2 \geq 0$ and $p\geq 1$.
\end{proposition}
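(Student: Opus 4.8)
The plan is to expand the left-hand side directly from the definition of the alternations and then collapse the resulting monomials using the commutativity of the even symmetric variables. Recall that the double alternation $\underbrace{\overline{y}_1^+ \cdots \widetilde{y}_1^+}_p \cdots \underbrace{\overline{y}_2^+ \cdots \widetilde{y}_2^+}_p$ antisymmetrizes $p$ linked pairs of positions, where the $k$-th pair consists of the $k$-th alternated slot to the left of $z^+$ (originally occupied by $y_1^+$) and the $k$-th alternated slot to the right of $z^+$ (originally occupied by $y_2^+$). Hence the left-hand side equals the signed sum over all subsets $S \subseteq \{1, \dots, p\}$ of ``swapped'' columns,
\[
\sum_{S \subseteq \{1,\dots,p\}} (-1)^{|S|}\, W_S,
\]
where $W_S$ is obtained from the unswapped word by exchanging, for each $k \in S$, the $y_1^+$ in the $k$-th left slot with the $y_2^+$ in the $k$-th right slot. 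Crucially, such a swap only interchanges the two labels: the slots keep their positions, so $z^+$ is never crossed and each slot stays on its original side of $z^+$.

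First I would fix a subset $S$ with $|S| = j$ and compute $W_S$. After swapping the columns in $S$, the part of the word to the left of $z^+$ consists of the fixed block $(y_1^+)^{i_1 - p}$ together with $p - j$ symbols $y_1^+$ and $j$ symbols $y_2^+$ from the alternated slots, while the part to the right of $z^+$ consists of $(y_1^+)^{i_2 - p}$ together with $j$ symbols $y_1^+$ and $p - j$ symbols $y_2^+$. Applying identity $2$ of Theorem~\ref{th:Base_gr_*}, namely $[y_1^+, y_2^+] = 0$, I may freely reorder the symbols on each side of $z^+$ (but not across it). Thus, modulo $\Id_{\mathbb{Z}_2}^{*}(A)$,
\[
W_S \equiv (y_1^+)^{i_1 - j}(y_2^+)^{j}\, z^+\, (y_1^+)^{i_2 - p + j}(y_2^+)^{p - j},
\]
which depends only on $j = |S|$ and not on the particular subset $S$.

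It remains to collect the terms according to the size $j = |S|$. Since there are exactly $\binom{p}{j}$ subsets of $\{1,\dots,p\}$ of cardinality $j$, each contributing the sign $(-1)^{j}$ and the same monomial computed above, grouping the $2^p$ summands yields $\sum_{j=0}^{p} (-1)^{j}\binom{p}{j} (y_1^+)^{i_1-j}(y_2^+)^{j} z^+ (y_1^+)^{i_2-p+j}(y_2^+)^{p-j}$, which is precisely the right-hand side of (\ref{eq:equation}). Alternatively, the identity can be established by induction on $p$, peeling off one column and invoking Pascal's rule $\binom{p}{j} = \binom{p-1}{j} + \binom{p-1}{j-1}$. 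The only delicate point is bookkeeping: one must use the precise meaning of the alternation (antisymmetrization of the $p$ linked columns, not of all symbols jointly) and observe that the reordering afforded by $[y_1^+, y_2^+]=0$ is confined to each side of $z^+$, since $z^+$ does not commute with the even symmetric variables. This is exactly what forces the specific exponents appearing on the two sides.
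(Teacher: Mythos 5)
Your proof is correct, but it takes a genuinely different route from the paper. The paper proves the identity by induction on $p$: it peels off the last alternated column on each side of $z^+$, expands that single alternation into two terms, applies the induction hypothesis to the remaining $p-1$ columns, and recombines the coefficients via Pascal's rule $\binom{p-1}{j+1}+\binom{p-1}{j}=\binom{p}{j+1}$. You instead expand the full alternation at once as the signed sum $\sum_{S\subseteq\{1,\dots,p\}}(-1)^{|S|}W_S$ over subsets of swapped columns, observe that $[y_1^+,y_2^+]=0$ (identity 2 of Theorem \ref{th:Base_gr_*}) collapses each $W_S$ to a normal form depending only on $j=|S|$, and count the $\binom{p}{j}$ subsets of each size. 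Your bookkeeping is right: after swapping $j$ columns the left side of $z^+$ carries $i_1-j$ copies of $y_1^+$ and $j$ of $y_2^+$, the right side carries $i_2-p+j$ and $p-j$ respectively, and you correctly note that reordering is confined to each side of $z^+$ since $z^+$ does not commute with the even variables. The direct expansion makes the origin of the binomial coefficient transparent and yields the closed form in one step, at the cost of having to unpack the multi-alternation notation precisely; the paper's induction is more mechanical and defers that unpacking to the base case $p=1$, but requires the Pascal recombination. (You also mention the inductive alternative in passing, which is exactly the paper's argument.)
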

\begin{proof}
    We prove it by induction on $p$. 
    The case $p=1$ is a straightforward computation. Let $p>1$ and let $\omega$ be the polynomial (\ref{eq:equation}). 
    \[
        \begin{split}
            \omega = & \underbrace{\overline{y}_{1}^{+} \cdots \widetilde{y}_{1}^{+} }_p (y_{1}^{+})^{i_1-p} z^{+}  (y_{1}^{+})^{i_2-p} \underbrace{ \overline{y}_{2}^{+} \cdots \widetilde{y}_{2}^{+} }_p  \\ 
             = & \underbrace{\overline{y}_{1}^{+} \cdots \widetilde{y}_{1}^{+} }_{p-1} (y_{1}^{+})^{i_1-(p-1)} z^{+}  (y_{1}^{+})^{i_2-1-(p-1)} {y}_{2}^{+} \underbrace{ \overline{y}_{2}^{+} \cdots \widetilde{y}_{2}^{+} }_{p-1}  \\
             & - \underbrace{\overline{y}_{1}^{+} \cdots \widetilde{y}_{1}^{+} }_{p-1} (y_{1}^{+})^{i_1-1-(p-1)} y_{2}^{+} z^{+}  (y_{1}^{+})^{i_2-(p-1)} \underbrace{ \overline{y}_{2}^{+} \cdots \widetilde{y}_{2}^{+} }_{p-1}.  \\
        \end{split}
      \]
    Applying induction to $p-1$ we obtain 
    \begin{equation} \label{eq:equa-2}
         \begin{split}
            \omega = & \sum_{j=0}^{p-1} (-1)^{j} \binom{p-1}{j} (y_{1}^{+})^{i_1-j} (y_{2}^{+})^{j} z^{+} (y_{1}^{+})^{(i_2-1) -(p-1)+j} (y_{2}^{+})^{(p-1)-j+1} \\ 
              & - \sum_{j=0}^{p-1} (-1)^{j} \binom{p-1}{j} (y_{1}^{+})^{(i_1-1)-j} (y_{2}^{+})^{j+1} z^{+} (y_{1}^{+})^{i_2-(p-1)+j} (y_{2}^{+})^{(p-1)-j}.
        \end{split}
    \end{equation}
The $j$-th monomial of the second summand is similar to the $j+1$-th monomial of the first summand, for every $j = 0$, \dots, $p-2$.
      Considering the sum of these monomials, for the corresponding coefficients we have 
      \[ 
            (-1)^{j+1}\binom{p-1}{j+1} - (-1)^{j}\binom{p-1}{j}  = (-1)^{j+1}\left( \binom{p-1}{j+1} + \binom{p-1}{j}   \right) 
             = (-1)^{j+1}\binom{p}{j+1}.
       \]
       The sum of the similar monomials of (\ref{eq:equa-2}) corresponds to the $j+1$-th monomial of
       \[ \omega_1 = \sum_{j=0}^{p} (-1)^{j} \binom{p}{j} (y_{1}^{+})^{i_1-j} (y_{2}^{+})^{j} z^{+} (y_{1}^{+})^{i_2-p+j} (y_{2}^{+})^{p-j},  \]
       for $j = 0$, \dots, $p-2$. But the first monomial of the first summand of (\ref{eq:equa-2}) is equal to the first monomial of $\omega_1$, whereas the $(p-1)$-th monomial of the second summand of (\ref{eq:equa-2})  is equal to the $p$-th monomial  of $\omega_1$. Thus, we have the desired equality.
\end{proof}

\begin{proposition} \label{pr:m_n3-1}
        If $\langle \lambda \rangle = ((p+q,p), \emptyset, (1), \emptyset)$ or $\langle \lambda \rangle = ((p+q,p),\emptyset, \emptyset, (1))$, where $p$, $q\geq 0$, then $m_{\langle \lambda \rangle} = (q+1)$ in (\ref{eq:cocharacter_A_gr_*}).
\end{proposition}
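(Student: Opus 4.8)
The plan is to treat both multipartitions $((p+q,p),\emptyset,(1),\emptyset)$ and $((p+q,p),\emptyset,\emptyset,(1))$ simultaneously, since the only difference is whether the single odd-degree variable is the symmetric $z^{+}$ or the skew $z^{-}$, and in the evaluations below both behave identically. Here $n_1 = 2p+q$, there are no even skew variables (by identity 1, $y^{-}=0$), and exactly one odd variable $z$. I would mimic the analysis carried out for $\mathcal{A}^1$ in the case $n=1$ (Propositions \ref{pro:linear_inde} and \ref{pro:m_(1)}). For $i=0,\dots,q$ I take the standard tableau of shape $\lambda(1)=(p+q,p)$ whose second row is alternated against the first $p$ entries of the first row; after the usual specialization to two symmetric even variables $y_1^{+},y_2^{+}$ and the odd variable $z$, its highest weight vector has the form
\[
a_{p,q}^{(i)}(y_1^{+},y_2^{+},z)=(y_1^{+})^i\,\underbrace{\bar y_1^{+}\cdots\tilde y_1^{+}}_{p}\,z\,\underbrace{\bar y_2^{+}\cdots\tilde y_2^{+}}_{p}\,(y_1^{+})^{q-i}.
\]

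For the lower bound $m_{\langle\lambda\rangle}\ge q+1$ I would show that the $a_{p,q}^{(i)}$ are linearly independent modulo $\Id_{\mathbb{Z}_2}^{*}(A)$. Assuming a relation $\sum_i\alpha_i a_{p,q}^{(i)}=0$, let $t$ be maximal with $\alpha_t\neq 0$, substitute $y_1^{+}\mapsto y_1^{+}+y_3^{+}$, and isolate the homogeneous component of degree $t+p$ in $y_1^{+}$ and $q-t$ in $y_3^{+}$, exactly as in Proposition \ref{pro:linear_inde}. Evaluating $y_1^{+}=e_{1,1}+e_{3,3}$, $y_2^{+}=y_3^{+}=e_{2,2}$, and $z=e_{1,2}+e_{2,3}$ (for $z^{+}$) or $z=e_{1,2}-e_{2,3}$ (for $z^{-}$), one checks that inside the sandwich $z$ acts exactly like $e_{1,2}$, since $(e_{1,1}+e_{3,3})e_{2,3}=0$ and $e_{2,3}e_{2,2}=0$. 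Because $e_{1,1}+e_{3,3}$ and $e_{2,2}$ are orthogonal idempotents, a product of these two matrices is nonzero only if all its factors coincide; hence every swapped term of the alternation (which places an $e_{2,2}$ to the left of $z$ while a genuine $e_{1,1}+e_{3,3}$ remains) vanishes, leaving only the identity term. This yields $\alpha_t e_{1,2}=0$, so $\alpha_t=0$, a contradiction.

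For the upper bound $m_{\langle\lambda\rangle}\le q+1$ I would use that, by Theorem \ref{th:Base_gr_*}(ii), $P_{n_1,0,1,0}(A)$ (resp. $P_{n_1,0,0,1}(A)$) is spanned modulo $\Id_{\mathbb{Z}_2}^{*}(A)$ by the ordered monomials $y_{i_1}^{+}\cdots y_{i_s}^{+}\,z\,y_{j_1}^{+}\cdots y_{j_t}^{+}$. Any highest weight vector of shape $\langle\lambda\rangle$ alternates on $p$ pairs of even symmetric variables; but identity 2, $[y_1^{+},y_2^{+}]=0$, forces any two $y^{+}$ lying on the same side of the single $z$ to commute, so alternating such a pair gives $0$. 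Thus each of the $p$ alternating pairs must straddle $z$, and the $q$ remaining first-row variables can only be distributed as $i$ copies before and $q-i$ after $z$, for $i=0,\dots,q$; this produces at most the $q+1$ vectors $a_{p,q}^{(i)}$. Proposition \ref{pr:equation} is the tool that rewrites each such alternated vector explicitly in the ordered monomial basis, confirming there are no hidden coincidences. Combining the two bounds gives $m_{\langle\lambda\rangle}=q+1$.

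The step I expect to be the main obstacle is the upper bound: making precise that the straddling configurations $a_{p,q}^{(i)}$ exhaust the highest weight vectors and remain independent after expansion into monomials, rather than the (routine) evaluation used for independence. This is exactly the counting argument of Proposition \ref{pro:m_(1)}, now carrying the extra bookkeeping that the odd slot may be symmetric or skew; since the graded $*$-identities of Theorem \ref{th:Base_gr_*} handle $z^{+}$ and $z^{-}$ uniformly and both specialize to $e_{1,2}$ in the sandwich, the same argument applies verbatim to both multipartitions.
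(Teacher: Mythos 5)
Your proposal is correct and follows essentially the same strategy as the paper: the upper bound comes from observing that $[y_1^{+},y_2^{+}]=0$ forces every alternating pair to straddle the single odd variable, leaving the $q+1$ highest weight vectors indexed by the position of $z$, and the lower bound from an explicit evaluation at $e_{1,1}+e_{3,3}$, $e_{2,2}$ and $e_{1,2}\pm e_{2,3}$. The only (minor) divergence is in verifying linear independence: you use the partial linearization $y_1^{+}\mapsto y_1^{+}+y_3^{+}$ and component extraction as in Proposition \ref{pro:linear_inde}, whereas the paper expands the vectors via Proposition \ref{pr:equation} and evaluates at the generic element $\alpha_i(e_{1,1}+e_{3,3})+\beta_i e_{2,2}$, reading off the independent monomials $\alpha_1^{t_1-p}\beta_1^{t_2-p}$; both verifications are valid.
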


\begin{proof}
    We  deal with the case $\langle \lambda \rangle = ((p+q,p), \emptyset, (1), \emptyset)$. The case $\langle \lambda \rangle = ((p+q,p),\emptyset, \emptyset, (1))$ is analogous. 

    Consider Young diagrams of shape $\langle \lambda \rangle$ filled in the standard way.     
From the identity $[y_1^+,y_2^+]$, fixed  the tableaux $ T_{\lambda(3)} = 
    \begin{tabular}{|c|}
        \hline $ t_1 $ \\
        \hline
    \end{tabular}$ for an integer $t_1$, then $t_1$ must be larger than all the
    integers lying in the first $p$ positions of the first row of $T_{\lambda(1)}$ and less than the ones lying in the second row. Otherwise, the corresponding highest weight vector will be a polynomial identity of $A$.

    Thus, the possibilities for the standard Young tableaux, such that  the corresponding   highest weight vectors are linearly independent, are given by 
    $$T_{\lambda(1)} = 
    \begin{tabular}{|c|c|c|c|c|c|c|c|c|}
        \hline$1$ & $2$ & $\cdots$ & $p$ & $\cdots$ & $t_1 -1$ & $t_1 + 1$ & $\cdots$ & $t_2$ \\
        \hline$t_2+1$ & $t_2+2$ & $\cdots$ & $n_1$ & \multicolumn{5}{|c}{}  \\
        \cline { 1 - 4 } 
    \end{tabular} \, ,$$
    $$T_{\lambda(3)} = 
    \begin{tabular}{|c|}
        \hline $ t_1 $ \\
        \hline
    \end{tabular}\,, \qquad T_{\lambda(2)}  = T_{\lambda(4)}  = \emptyset ,$$
    and the corresponding   highest weight vectors are
        \[  \omega_{t_1} = \overline{y}_{1}^{+} \cdots \widetilde{y}_{1}^{+} (y_{1}^{+})^{t_1-1-p} z^{+}  (y_{1}^{+})^{t_2-t_1} \overline{y}_{2}^{+} \cdots \widetilde{y}_{2}^{+}, \]
    where $t_1 = p+1$, \dots, $p + q +1$. 
    We can rewrite the polynomials $\omega_{t_1}$ as 
    \[  \omega_{t_1} = \overline{y}_{1}^{+} \cdots \widetilde{y}_{1}^{+} (y_{1}^{+})^{t_1-p} z^{+}  (y_{1}^{+})^{t_2-p} \overline{y}_{2}^{+} \cdots \widetilde{y}_{2}^{+}, \]
    where $t_1 = p, \dots, p + q $ and $t_1 + t_2 = n_1$. By Equation (\ref{eq:equation}) 
    \[ \omega_{t_1} =  \sum_{j=0}^{p} (-1)^{j} \binom{p}{j} (y_{1}^{+})^{t_1-j} (y_{2}^{+})^{j} z^{+} (y_{1}^{+})^{t_2-p+j} (y_{2}^{+})^{p-j}.\]
    Let us see that the elements of the set $\{ \omega_{t_1} \mid t_1 = p, \dots, p+q \}$ are linearly independent.  Consider the evaluations $z^{+} = e_{1,2} + e_{2,3}$, $y_{i}^{+} = \alpha_i(e_{1,1} + e_{3,3}) + \beta_i e_{2,2}$. Then, \[ 
    \begin{split}
        \omega_{t_1} &  = \bigg[ \sum_{j=0}^{p} (-1)^{j} \binom{p}{j} \alpha_1^{t_1-1}\alpha_2^{j}\beta_1^{t_2-p+j}\beta_2^{p-j}\bigg] e_{1,2} + [*]e_{2,3} \\
        & = \bigg[  \sum_{j=0}^{p} (-1)^{j} \binom{p}{j} \beta_1^{j}\alpha_2^{j}\beta_2^{p-j}\alpha_1^{p-j}\alpha_1^{t_1-p}\beta_1^{t_2 -p} \bigg] e_{1,2} + [*]e_{2,3} \\
        & = \big[ (\beta_2 \alpha_1 - \beta_1 \alpha_2)^{p} \alpha_1^{t_1-p}\beta_1^{t_2 -p} \big] e_{1,2} + [*]e_{2,3} .
    \end{split}  
    \]
Here the coefficients of $e_{23}$ are irrelevant and we denote them by (*).    Now, $(\beta_2 \alpha_1 - \beta_1 \alpha_2)^{p}$ appears in each evaluation of $\omega_{t_1}$, so we consider just the part $\alpha_1^{t_1-p}\beta_1^{t_2 -p}e_{1,2}$. Suppose that there are $\gamma_{i}$'s such that $ \sum_{t_1 = p}^{p+q} \gamma_{t_1}\omega_{t_1} \equiv 0$. Then, for all $\alpha_1$, $\beta_1 \in F$ we have $\sum_{t_1 = p}^{p+q} \gamma_{t_1} \alpha_1^{t_1-p}\beta_1^{t_2 -p} = 0$. It follows  $\gamma_i = 0$ for each $i$. Thus, the highest weight vectors $\omega_{t_1}$ are linearly independent, $t_1 =p$, \dots, $p+q$, and $m_{\langle \lambda \rangle} = (q+1)$.
\end{proof}

Now, we consider the case $n_3 + n_4 = 2$. 

\begin{proposition} \label{pr:m_n3-2}
        If $\langle \lambda \rangle = ((p+q,p), \emptyset, (n_3), (n_4))$, where $p$, $q\geq 0$ and $n_3 + n_4 = 2$, then $m_{\langle \lambda \rangle} = (q+1)$ in (\ref{eq:cocharacter_A_gr_*}).
\end{proposition}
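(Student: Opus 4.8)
The plan is to mirror the argument of Proposition \ref{pr:m_n3-1}, treating separately the three admissible shapes of the odd part. Since $h(\lambda(3)), h(\lambda(4)) \le 1$ and $n_3 + n_4 = 2$, the multipartition $\langle\lambda\rangle$ falls into exactly one of the cases $(n_3,n_4) = (2,0)$, $(0,2)$, or $(1,1)$, that is $\lambda(3) = (2)$, or $\lambda(4) = (2)$, or $\lambda(3) = \lambda(4) = (1)$. In each case I would first invoke the spanning description of $P_{n_1,0,n_3,n_4}(A)$ from the proof of Theorem \ref{th:Base_gr_*} (case (iii)) to reduce to monomials in which one odd variable sits in the interior and the other at the right end. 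Then, using identities $2$, $9$, $10$, $11$ exactly as in Proposition \ref{pr:m_n3-1}, I would argue that the only standard tableaux whose highest weight vectors survive modulo $\Id_{\mathbb{Z}_2}^*(A)$ are the $q+1$ tableaux indexed by the position $t_1 = p, \dots, p+q$ of the interior odd variable relative to the alternating blocks of $\lambda(1)=(p+q,p)$. This already yields the upper bound $m_{\langle\lambda\rangle} \le q+1$.

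For the lower bound I would exhibit the candidate highest weight vectors, for instance in the case $(2,0)$
\[
\omega_{t_1} = \underbrace{\overline{y}_1^{+} \cdots \widetilde{y}_1^{+}}_p (y_1^{+})^{t_1-p} z^{+} (y_1^{+})^{t_2-p} \underbrace{\overline{y}_2^{+} \cdots \widetilde{y}_2^{+}}_p z^{+},
\]
with $t_1 + t_2 = n_1$, together with the evident analogues obtained by replacing $z^{+}$ with $z^{-}$ (case $(0,2)$), or by taking the interior variable $z^{+}$ and the terminal variable $z^{-}$ (case $(1,1)$). Applying Proposition \ref{pr:equation} to the portion preceding the terminal odd variable rewrites each $\omega_{t_1}$ as $\sum_{j=0}^{p} (-1)^{j}\binom{p}{j}(y_1^{+})^{t_1-j}(y_2^{+})^{j}\, w\, (y_1^{+})^{t_2-p+j}(y_2^{+})^{p-j}\, w'$, where $w$ and $w'$ denote the two odd variables in their fixed positions.

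The computational heart is the evaluation $y_i^{+} = \alpha_i(e_{1,1}+e_{3,3}) + \beta_i e_{2,2}$ together with $z^{+} = e_{1,2}+e_{2,3}$ and $z^{-} = e_{1,2}-e_{2,3}$. A direct check shows that in every case the image lands in $K e_{1,3}$: the interior odd variable moves the diagonal action into the $(1,2)$ and $(2,3)$ slots, and the terminal odd variable then annihilates the $(2,3)$ component while sending the $(1,2)$ component to $e_{1,3}$ (with a harmless global sign whenever a $z^{-}$ occupies the terminal slot). Collecting the coefficient of $e_{1,3}$ and summing over $j$ produces, exactly as in Proposition \ref{pr:m_n3-1}, the factor $(\alpha_1\beta_2 - \alpha_2\beta_1)^{p}\,\alpha_1^{t_1-p}\beta_1^{t_2-p}$. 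Since $(\alpha_1\beta_2-\alpha_2\beta_1)^{p}$ is common to all $t_1$, a relation $\sum_{t_1} \gamma_{t_1}\omega_{t_1} \equiv 0$ forces $\sum_{t_1} \gamma_{t_1}\alpha_1^{t_1-p}\beta_1^{t_2-p} = 0$ for all $\alpha_1,\beta_1$, whence a Vandermonde argument gives $\gamma_{t_1}=0$ for every $t_1$. This establishes linear independence of the $q+1$ vectors and hence $m_{\langle\lambda\rangle} = q+1$.

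I expect the main obstacle to be the mixed case $(1,1)$: here the two odd variables have different $*$-parity, so it is identity $11$ (the anticommutativity of $z^{-}$ and $z^{+}$ across a symmetric block), rather than identities $9$/$10$, that governs how the interior and terminal odd variables may be exchanged, and one must verify that the resulting highest weight vector neither vanishes nor coincides (up to a scalar) with one already counted. The sign bookkeeping in the $e_{1,3}$-computation must be tracked carefully to confirm that the evaluation is genuinely nonzero in this case; once that is settled, the Vandermonde step is identical to the other two cases.
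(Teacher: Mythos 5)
Your proposal follows essentially the same route as the paper: the same reduction of the standard tableaux to the $q+1$ choices of $t_1$, the same highest weight vectors $\omega_{t_1}$ ending in a terminal odd variable, the same application of Proposition \ref{pr:equation}, and the same evaluation $y_i^{+} = \alpha_i(e_{1,1}+e_{3,3})+\beta_i e_{2,2}$ producing the factor $(\alpha_1\beta_2-\alpha_2\beta_1)^p\alpha_1^{t_1-p}\beta_1^{t_2-p}$ of $e_{1,3}$ followed by a Vandermonde argument. The paper likewise treats only the $(2,0)$ case in detail and declares $(0,2)$ and $(1,1)$ analogous, so your flagged concern about the mixed case is a reasonable point of care but not a departure from the paper's argument.
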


\begin{proof}
    We prove the case $\langle \lambda \rangle = ((p+q,p), \emptyset, (2), \emptyset)$. The cases $\langle \lambda \rangle = ((p+q,p),\emptyset, (1), (1))$ and $\langle \lambda \rangle = ((p+q,p),\emptyset, \emptyset, (2))$ are analogous. 
    Consider Young diagrams of shape $\langle \lambda \rangle$ filled in the standard way.

    As in the proof of Proposition \ref{pr:m_n3-1}, fixed an integer $t_1$ in the tableau $$ T_{\lambda(3)} = 
    \begin{tabular}{|c|c|}
        \hline $ t_1 $ & $ $ \\
        \hline
    \end{tabular}\,,$$  $t_1$ must be larger than  the integers lying in the first $p$ position of the first row of $T_{\lambda(1)}$ and less than all the ones lying in the second row. Also, by the identities $[y^{+},z_{1}z_{2}]$, $[y_{1}^{+},z_{1}y_{2}^{+}z_{2}]$, and  $ z_{1}^{+}y^{+}z_{2}^{+} - z_{2}^{+}y^{+}z_{1}^{+}$, we can fix the integer $n_1+2$ in the second block of the tableaux $T_{\lambda(3)}$.

    Thus, the possibilities for the standard Young tableaux, such that  the corresponding   highest weight vectors are linearly independent, are given by 
    \[
    T_{\lambda(1)} = 
    \begin{tabular}{|c|c|c|c|c|c|c|c|c|}
        \hline$1$ & $2$ & $\cdots$ & $p$ & $\cdots$ & $t_1 -1$ & $t_1 + 1$ & $\cdots$ & $t_2$ \\
        \hline$t_2+1$ & $t_2+2$ & $\cdots$ & $n_1+1$ & \multicolumn{5}{|c}{}  \\
        \cline { 1 - 4 } 
    \end{tabular} \, ,
    \]
    \[
    T_{\lambda(3)} = 
    \begin{tabular}{|c|c|}
        \hline $ t_1 $ & $n_1+2$ \\
        \hline
    \end{tabular}\,, \qquad T_{\lambda(2)}  = T_{\lambda(4)}  = \emptyset ,
    \]
    and the corresponding   highest weight vectors are
    \[  \omega_{t_1} = \overline{y}_{1}^{+} \cdots \widetilde{y}_{1}^{+} (y_{1}^{+})^{t_1-p} z_{1}^{+}  (y_{1}^{+})^{t_2-p} \overline{y}_{2}^{+} \cdots \widetilde{y}_{2}^{+}z_{1}^{+}, \]
        where $t_1 = p$, \dots, $p + q $ and $t_1 + t_2 = n_1$. By Equation (\ref{eq:equation}) 
    \[ \omega_{t_1} =  \sum_{j=0}^{p} (-1)^{j} \binom{p}{j} (y_{1}^{+})^{t_1-j} (y_{2}^{+})^{j} z_{1}^{+} (y_{1}^{+})^{t_2-p+j} (y_{2}^{+})^{p-j}z_{1}^{+}.\]
We prove the elements of the set $\{ \omega_{t_1} \mid t_1 = p, \dots, p+q \}$ are linearly independent.  Consider the evaluations $z_{1}^{+} = e_{1,2} + e_{2,3}$, $y_{i}^{+} = \alpha_i(e_{1,1} + e_{3,3}) + \beta_i e_{2,2}$. Then, \[ 
    \begin{split}
        \omega_{t_1} &  = \bigg[ \sum_{j=0}^{p} (-1)^{j} \binom{p}{j} \alpha_1^{t_1-1}\alpha_2^{j}\beta_1^{t_2-p+j}\beta_2^{p-j}\bigg] e_{1,3} \\
        & = \bigg[  \sum_{j=0}^{p} (-1)^{j} \binom{p}{j} \beta_1^{j}\alpha_2^{j}\beta_2^{p-j}\alpha_1^{p-j}\alpha_1^{t_1-p}\beta_1^{t_2 -p} \bigg] e_{1,3} \\
        & = \big[ (\beta_2 \alpha_1 - \beta_1 \alpha_2)^{p} \alpha_1^{t_1-p}\beta_1^{t_2 -p} \big] e_{1,3} .
    \end{split}  
    \]
    Since $(\beta_2 \alpha_1 - \beta_1 \alpha_2)^{p}$ appears in each evaluation of $\omega_{t_1}$,  we consider only the part $\alpha_1^{t_1-p}\beta_1^{t_2 -p}e_{1,2}$. Suppose  there exist $\gamma_{i}$'s such that $\sum_{t_1 = p}^{p+q} \gamma_{t_1}\omega_{t_1} \equiv 0$. Then, for all $\alpha_1$, $\beta_1 \in F$ one has $\sum_{t_1 = p}^{p+q} \gamma_{t_1} \alpha_1^{t_1-p}\beta_1^{t_2 -p} = 0$.  It follows that $\gamma_i = 0$ for each $i$. So the highest weight vectors $\omega_{t_1}$ are linearly independent for $t_1 =p$, \dots, $p+q$ and $m_{\langle \lambda \rangle} = (q+1)$.
\end{proof}

Hence we obtain the following theorem.

\begin{theorem}
Let 
\[ \chi_{n_1,n_2,n_3,n_4}(A) = \sum_{\langle \lambda \rangle \vdash (n_1,n_2,n_3,n_4) } m_{\langle \lambda \rangle}\chi_{\lambda(1)} \otimes  \chi_{\lambda(2)} \otimes  \chi_{\lambda(3)} \otimes  \chi_{\lambda(4)} \]
be the $(n_1,n_2,n_3,n_4)$-cocharacter of $A$. Then
\begin{itemize}
    \item[(1)] $m_{\langle \lambda \rangle} = 1$, if $\langle \lambda \rangle = ((n_1), \emptyset, \emptyset, \emptyset) $ or $\langle \lambda \rangle  = (\emptyset, \emptyset, (n_3),(n_4))$, where $n_1 > 0$ and $0< n_3 + n_4 \leq 2$.

    \item[(2)] $m_{\langle \lambda \rangle} = q+1$, if $\lambda = ((p+q,p), \emptyset, (n_3), (n_4))$,   where  $p,q \geq 0$ and $n_3 + n_4 = 1$;

    \item[(3)] $m_{\langle \lambda \rangle} = q+1$, if $\lambda = ((p+q,p), \emptyset, (n_3), (n_4))$, where $p,q \geq 0$ and $n_3 + n_4 = 2$.    
\end{itemize}
    In all remaining cases $m_{\langle \lambda \rangle} = 0$.
\end{theorem}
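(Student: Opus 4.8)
The plan is to read the theorem off the propositions already proved in this subsection; the only genuine content is to verify that the listed families of multipartitions exhaust all $\langle\lambda\rangle$ with $m_{\langle\lambda\rangle}\neq 0$. First I would record the vanishing constraints. By identities 1 and 8 of Theorem~\ref{th:Base_gr_*} (namely $y^-=0$ and $z_1z_2z_3=0$) we have $P_{n_1,n_2,n_3,n_4}(A)=0$ whenever $n_2>0$ or $n_3+n_4>2$, exactly as noted in the proof of Theorem~\ref{th:Base_gr_*}; hence $m_{\langle\lambda\rangle}=0$ unless $\lambda(2)=\emptyset$ and $n_3+n_4\le 2$. Since the odd symmetric and odd skew components $A_1^+=K(e_{1,2}+e_{2,3})$ and $A_1^-=K(e_{1,2}-e_{2,3})$ are each one-dimensional, any alternation on two odd symmetric (or two odd skew) variables vanishes, so $m_{\langle\lambda\rangle}=0$ unless $h(\lambda(3))\le 1$ and $h(\lambda(4))\le 1$; and identity 2, $[y_1^+,y_2^+]=0$, forces $h(\lambda(1))\le 2$, i.e.\ $\lambda(1)=(p+q,p)$ for some $p,q\ge 0$. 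All of these reductions were collected in the paragraph preceding Proposition~\ref{pr:m_n3-1}.

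With the admissible shapes narrowed to $\lambda(1)=(p+q,p)$, $\lambda(2)=\emptyset$, $h(\lambda(3)),h(\lambda(4))\le 1$ and $0\le n_3+n_4\le 2$, I would split according to the value of $n_3+n_4$. For $n_3+n_4=0$, item (i) in the proof of Theorem~\ref{th:Base_gr_*} gives $P_{n_1,0,0,0}(A)=\Span\{y_1^+\cdots y_{n_1}^+\}$, a one-dimensional module affording $\chi_{(n_1)}$; thus $m_{\langle\lambda\rangle}=1$ precisely when $\lambda(1)=(n_1)$ (the case $p=0$) and $m_{\langle\lambda\rangle}=0$ for every genuine two-row $\lambda(1)$, which yields the first clause of (1). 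The pure-odd strata $n_1=0$, $0<n_3+n_4\le 2$ give $m_{\langle\lambda\rangle}=1$ by the proposition establishing the value $1$ for $((n_1),\emptyset,\emptyset,\emptyset)$ and $(\emptyset,\emptyset,(n_3),(n_4))$, yielding the second clause of (1); these are just the instances $p=q=0$ of (2) and (3). Finally, for $n_3+n_4=1$ and $n_3+n_4=2$, Propositions~\ref{pr:m_n3-1} and~\ref{pr:m_n3-2} give $m_{\langle\lambda\rangle}=q+1$ for all $\lambda(1)=(p+q,p)$, which are exactly statements (2) and (3). Assembling these outcomes and discarding every shape ruled out in the first step gives the decomposition, with $m_{\langle\lambda\rangle}=0$ in all remaining cases.

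Since the theorem is a synthesis rather than a fresh computation, there is no serious obstacle beyond careful bookkeeping. The one point deserving attention is the exhaustiveness of the case split: one must confirm that the height bounds on $\lambda(1)$, $\lambda(3)$, $\lambda(4)$ together with the restrictions $n_2=0$ and $n_3+n_4\le 2$ leave precisely the listed families, and in particular that a two-row partition $\lambda(1)=(p+q,p)$ with $p>0$ contributes only when an odd variable is present to act as a separator across which the $y_1^+$'s and $y_2^+$'s may alternate. This is exactly why the $n_3+n_4=0$ stratum collapses to the single row $(n_1)$, while the strata with $n_3+n_4\in\{1,2\}$ produce the full multiplicity $q+1$.
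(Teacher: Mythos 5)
Your proposal is correct and follows essentially the same route as the paper: the theorem is obtained by assembling the vanishing constraints stated before Proposition~\ref{pr:m_n3-1} with the multiplicity computations of the preceding propositions, exactly as you describe. Your extra remark on why the $n_3+n_4=0$ stratum collapses to $\lambda(1)=(n_1)$ is a sensible piece of bookkeeping that the paper leaves implicit.
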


\bibliographystyle{amsplain}
\bibliography{references}

\end{document}